\newcommand{\mb}{\mathbf}
\renewcommand{\dot}{\centerdot}
\newcommand{\cM}{\mathcal{M}}
\newcommand{\sP}{\mathscr{P}}
\newcommand{\A}{\mathcal{A}}
\newcommand{\cC}{\mathcal{C}}
\newcommand{\cF}{\mathcal{F}}
\newcommand{\bN}{\mathbf{N}}
\newcommand{\sC}{\mathscr{C}}
\newcommand{\ul}{\underline}
\newcommand{\gc}{\widehat{\cC}}
\newcommand{\hf}{\dfrac{1}{2}}
\newcommand{\fto}{\xrightarrow}
\begin{document}

\numberwithin{equation}{section}

\newtheorem*{thm*}{Theorem 1.8}
\newtheorem{thm}[equation]{Theorem}
\newtheorem{prop}[equation]{Proposition}
\newtheorem{lem}[equation]{Lemma}
\newtheorem{cor}[equation]{Corollary}

\newtheoremstyle{example}{\topsep}{\topsep}%
     {}
     {}
     {\bfseries}
     {.}
     {2pt}
     {\thmname{#1}\thmnumber{ #2}\thmnote{ #3}}

   \theoremstyle{example}
   \newtheorem{nota}[equation]{Notation}
   \newtheorem{Defi}[equation]{Definition}
   \newtheorem{rem}[equation]{Remark}

\title[Coherence]{Infinite loop spaces, and coherence for symmetric monoidal bicategories}

\author{Nick Gurski}
\address{
School of Mathematics and Statistics,
University of Sheffield,
Sheffield, UK, S3 7RH
}
\email{nick.gurski@sheffield.ac.uk}
\author{Ang\'elica M. Osorno}
\address{
Department of Mathematics,
University of Chicago,
Chicago, IL 60637 USA
}
\email{aosorno@math.uchicago.edu}



\begin{abstract}
This paper proves three different coherence theorems for symmetric monoidal bicategories.  First, we show that in a free symmetric monoidal bicategory every diagram of 2-cells commutes.  Second, we show that this implies that the free symmetric monoidal bicategory on one object is equivalent, as a symmetric monoidal bicategory, to the discrete symmetric monoidal bicategory given by the disjoint union of the symmetric groups.  Third, we show that every symmetric monoidal bicategory is equivalent to a strict one.

We give two topological applications of these coherence results.  First, we show that the classifying space of a symmetric monoidal bicategory can be equipped with an $E_{\infty}$ structure.  Second, we show that the fundamental 2-groupoid of an $E_{n}$ space, $n \geq 4$, has a symmetric monoidal structure.  These calculations also show that the fundamental 2-groupoid of an $E_{3}$ space has a sylleptic monoidal structure.
\end{abstract}

\maketitle

\tableofcontents

\section*{Introduction}

Monoidal categories come in three basic types:  plain monoidal, braided monoidal, and symmetric monoidal.  Each kind of monoidal category also has its own accompanying coherence theory.  The basic results in the plain and symmetric case were worked out in \cite{maclane-assoc}, and the braided case appears in \cite{js}.  It should also be pointed out that \cite{js} re-examines the plain monoidal case, and gives a more complete treatment that includes coherence for monoidal functors as well as monoidal categories.  In each case, the fundamental coherence result is that any diagram built from the monoidal constraint isomorphisms, and satisfying some additional property, automatically commutes.  The key task is then to determine what additional properties are required of a diagram.

In the case of monoidal categories, the only additional property required is that the diagram be the image of a diagram in a free monoidal category generated by a set of objects.  (Note that this is a sufficient condition:  there is nothing preventing other diagrams in specific monoidal categories from commuting.)  In particular, this result implies that every diagram in a free monoidal category generated by a set of objects commutes.  While this condition might appear stringent at first sight, in fact it is very much not so.  The diagrams that this condition rules out are those which are only diagrams ``accidentally.''  Here is an example.  Assume we have a monoidal category with objects $x,y$ such that $xy=I=yx$.  Then we could ask if the following diagram commutes.
\[
\xy
{\ar^{a} (0,0)*+{(xy)x}; (25,0)*+{x(yx)} };
{\ar^{=} (25,0)*+{x(yx)}; (50,0)*+{xI} };
{\ar^{r} (50,0)*+{xI}; (50,-15)*+{x} };
{\ar_{=} (0,0)*+{(xy)x}; (0,-15)*+{Ix} };
{\ar_{l} (0,-15)*+{Ix}; (50,-15)*+{x} };
\endxy
\]
The coherence theorem for monoidal categories does not imply that this diagram commutes since we are required to use the equations $xy=I=yx$ to even construct the diagram in the first place.   Thus the condition of being the image of a diagram in a free object rules out the diagram relying on equations between objects.  Since braided and symmetric monoidal categories are monoidal by neglect of structure, this condition will always lurk in the background of any coherence theorem.

The coherence theorem for braided monoidal categories adds a (sufficient, but once again not necessary) geometric condition.  The additional structure in a braided monoidal category is a braid which is an isomorphism $\gamma_{xy}:xy \rightarrow yx$.  As the name implies, this serves to braid the object $x$ over the object $y$, and so the axioms that this collection of isomorphisms must satisfy are reminiscent of the relations in the standard presentation of the braid groups.  In particular, a morphism which is constructed from the coherence isomorphisms in a braided monoidal category can be assigned a geometric braid:  the associativity and unit isomorphisms produce the identity braid, and the isomorphisms $\gamma_{xy}$ are interpreted as one expects.  The coherence theorem in this case states that two such morphisms $f,g:a \rightarrow b$ are equal if their underlying geometric braids are equivalent as such.  In particular, the free braided monoidal category generated by a single object is equivalent
to the braided monoidal category $\mb{Br}$ which is the disjoint union of the braid groups.

The coherence theorem for symmetric monoidal categories is similar to that for braided monoidal categories in that it gives a sufficient condition for a diagram to commute based upon some invariant of the morphisms in that diagram.  A symmetric monoidal category adds the requirement that $\gamma_{yx}\gamma_{xy} = 1_{xy}$, and this means that a morphism constructed from the coherence constraints now has an underlying permutation instead of an underlying braid.  Hence the theorem is that a pair of parallel morphisms $f,g:a \rightarrow b$, each of which is constructed only out of coherence isomorphisms, are equal if their underlying permutations are equal.

One could then pursue coherence theorems for each of the various flavors of monoidal bicategories.  For plain monoidal bicategories, this was accomplished as a corollary to coherence for tricategories.  In \cite{GPS}, the authors prove that every monoidal bicategory is equivalent to a Gray-monoid.  Such an object is a monoid in the category of 2-categories and 2-functors equipped with Gray tensor product, and therefore has strict composition as well as a strictly associative and unital multiplication.  Gray-monoids do not have a notion of the tensor product of a pair of 1-cells, and there is only a 2-cell isomorphism between the composites
\[
(f \otimes 1) \circ (1 \otimes g) \cong (1 \otimes g) \circ (f \otimes 1),
\]
which itself then satisfies coherence axioms.  The diagrammatic form of coherence was established by the first author in his PhD thesis, and will appear in publication in \cite{gurski-tri}.  This form of coherence states that in a free monoidal bicategory, every diagram of constraint 2-cells commutes.

The basic coherence theory for braided monoidal bicategories was also established by the first author.  In \cite{gurski-braid}, it is shown that every diagram of constraint 2-cells in a free braided monoidal bicategory commutes.  In particular, this shows that the braiding is essentially a one-dimensional phenomenon.  This paper also shows that every braided monoidal bicategory is biequivalent to a strict braided monoidal bicategory in the sense of Crans \cite{crans}, in particular showing that the earlier definition of Baez and Neuchl \cite{BN} is not actually weaker than that of Crans if we work up to braided monoidal biequivalence.

Unlike the case of monoidal categories in which there are three varieties (plain, braided, and symmetric), for bicategories there are four:  plain, braided, sylleptic, and symmetric.  A sylleptic structure adds an invertible 2-cell between braiding twice and the identity, satisfying some naturality-type conditions, and a symmetric structure imposes a further axiom concerning braiding three times.  Neither of these kinds of monoidal bicategories have been studied from the perspective of coherence theory, and this paper treats the symmetric case.  Our main theorem is the following.
\begin{thm*}
In the free symmetric monoidal bicategory on a single object, every diagram of 2-cells commutes.  Equivalently, between every pair of parallel 1-cells there is either a unique invertible 2-cell or no 2-cells at all.  Moreover, parallel 1-cells are isomorphic if and only if they have the same underlying permutation.
\end{thm*}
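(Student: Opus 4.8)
The plan is to deduce the theorem from the braided coherence theorem of \cite{gurski-braid}, isolating the syllepsis and the symmetry axiom as the only essentially new ingredients. I begin with the elementary reformulations implicit in the statement. A diagram of 2-cells commutes exactly when any two parallel 2-cells are equal, and this is equivalent to there being at most one 2-cell between each parallel pair of 1-cells; since every constraint 2-cell is invertible, ``at most one'' sharpens to ``exactly one or none.'' The theorem thus divides into an \emph{existence} assertion---a 2-cell $f \Rightarrow g$ exists iff $f$ and $g$ have the same underlying permutation---and a \emph{uniqueness} assertion---such a 2-cell, when it exists, is unique. Because the constraints neither create nor destroy copies of the generator, a 1-cell $X^{\otimes m} \to X^{\otimes n}$ can exist only when $m=n$, so it suffices to treat each endomorphism hom-bicategory $\mathrm{Hom}(X^{\otimes n}, X^{\otimes n})$ in isolation. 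As a final preliminary I would invoke coherence for tricategories \cite{GPS,gurski-tri} to strictify the underlying monoidal structure, so that associativity and unit constraints may be taken to be identities and the only nontrivial data left to control are the braiding $\gamma$, its braided constraint 2-cells, and the syllepsis $\sigma$.

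For existence I would use the braided counterpart of the present theorem: in the free braided monoidal bicategory parallel 1-cells are isomorphic iff they carry the same underlying braid, so that $\pi_0$ of each endo-hom is the braid group $B_n$. The syllepsis provides an invertible 2-cell $\gamma_{yx}\circ\gamma_{xy} \Rightarrow 1$, witnessing the relation $\sigma_i^2 \cong 1$; whiskering this into arbitrary contexts realizes all of its conjugates, which normally generate the pure braid group $\ker(B_n \twoheadrightarrow \Sigma_n)$. Hence any two 1-cells lying over the same element of $\Sigma_n$ become connected by a 2-cell, and $\pi_0$ collapses exactly along $B_n \twoheadrightarrow \Sigma_n$. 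Conversely, the underlying permutation is clearly invariant under every constraint 2-cell: each is a composite of braided data and of $\sigma^{\pm 1}$, none of which alters the underlying permutation. So sameness of permutation is both necessary and sufficient.

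The crux, and the step I expect to be the main obstacle, is uniqueness. Every 2-cell is a vertical and horizontal composite of whiskered constraint 2-cells, so relative to braided coherence the only new generators are the whiskered instances of $\sigma$ and $\sigma^{-1}$. My strategy is to organize these into a rewriting system on 2-cells: braided moves, which are unique by \cite{gurski-braid}, together with syllepsis moves, which cancel adjacent double braidings $\gamma_{yx}\circ\gamma_{xy}$. Choosing a reduced word for each permutation gives a normal form for 1-cells, and the claim becomes that every 2-cell reduces to a canonical composite ``braided isomorphism, then a prescribed pattern of syllepses, then braided isomorphism,'' uniquely determined by its source and target. The delicate point is confluence: two syllepses applied in incomparable positions, and above all the two distinct ways of untwisting a triple crossing, must yield equal 2-cells. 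This is precisely the content that the symmetry axiom (governing braiding three times) and the naturality axioms for $\sigma$ are designed to supply. I would therefore verify that these axioms resolve every critical pair, so that by Newman's lemma the rewriting system is confluent, each 2-cell has a unique normal form, and consequently any two parallel 2-cells coincide.
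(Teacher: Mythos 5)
Your strategic outline does overlap substantially with the paper's proof: both rest on braided coherence, both treat the syllepsis cancellations $\sigma_i^2 \leadsto 1$ as the new rewriting moves, and your ``two distinct ways of untwisting a triple crossing'' is exactly the critical pair that the paper resolves with the symmetry axiom (its second marked-braid lemma). Your existence argument, via the fact that conjugates of $\sigma_i^2$ generate the pure braid group, is also a legitimate alternative to the paper's route through minimal braids. Nevertheless there are two genuine gaps. First, you never handle the pseudo-inverse braidings. The free symmetric monoidal bicategory has generating 1-cells $R^{\dot}_{xy}$ and generating 2-cells $\eta, \varepsilon$ for the adjoint equivalences $R \dashv_{eq} R^{\dot}$; strictifying the underlying monoidal (or even braided) structure does not remove them, so the underlying words of 1-cells contain negative as well as positive crossings, and your rewriting system has no moves covering $R^{\dot}$, $\eta$, or $\varepsilon$. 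The paper devotes all of $\S$3.1 to exactly this reduction: the symmetry axiom yields an adjoint equivalence $R_{xy} \dashv_{eq} R_{yx}$ (Lemma \ref{RRadj}), hence a canonical invertible 2-cell $t \colon R^{\dot}_{xy} \Rightarrow R_{yx}$ which is a modification (Lemma \ref{tisamod}), and these are used to show that the positive part $\mathcal{S}_{+} \hookrightarrow \mathcal{S}$ is a symmetric monoidal biequivalence (Proposition \ref{positivebraidedstructure}); only after this is the rewriting argument possible.

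Second, the appeal to Newman's lemma fails, because your rewriting system is not terminating: the Yang--Baxter and commutation moves are reversible (e.g.\ $\sigma_i\sigma_{i+1}\sigma_i \leadsto \sigma_{i+1}\sigma_i\sigma_{i+1} \leadsto \sigma_i\sigma_{i+1}\sigma_i \leadsto \cdots$), so local confluence alone gives nothing. Moreover the confluence you need is not confluence of braid words but equality of the \emph{realized 2-cells} of parallel reductions, including 2-cells built with $v^{-1}$. The paper's substitute for Newman is an induction on the number of type-V cancellations in a complete reduction (Theorem \ref{completered}), with the reversible braided moves absorbed by braided coherence rather than treated as rewriting steps, a marked-braid bookkeeping device to organize the critical pairs into three cases (cancelled pairs of crossings equal, sharing one crossing, or disjoint), and a final zigzag factorization of an arbitrary 2-cell into crossing-decreasing and crossing-increasing parts. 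Relatedly, your normal-form step is unproved: to reduce an arbitrary 1-cell to one over a reduced word you must show that whenever two strands of a positive braid cross twice, braided moves can bring those two crossings adjacent so that a literal $\sigma_i^2$ appears; this is nontrivial positive-braid combinatorics (left-weighted/Garside factorization, quoted from \cite{EM2}) used in Proposition \ref{minimal}, and the same source supplies the base case of the induction, namely that minimal positive braids with the same underlying permutation are equal in the braid group (Lemma \ref{l=0}).
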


In practice, it is usually the plain monoidal and the symmetric monoidal coherence theorems which are most often invoked.  The plain monoidal case is foundational, while many naturally-occurring, large categories which have monoidal structures are in fact symmetric.  Additionally, symmetric monoidal structures on small categories are useful in stable homotopy theory for producing spectra through the use of various K-theory machines \cite{maygeom, segal}.  Indeed, the coherence theorem is an essential ingredient of the proofs that the classifying space of a symmetric monoidal category is an $E_{\infty}$ space.  As other fields continue to incorporate more and more constructions of an essentially 2-categorical nature, one should expect this trend to continue, with the coherence theorems for monoidal bicategories and for symmetric monoidal bicategories to be the most useful in applications.

Here is an outline of this paper.  The first section collects together some background information and the statements of the coherence theorems.  This consists primarily of definitions we will need later, but we also give some examples of symmetric monoidal bicategories.  Most of these examples are relatively simple bicategories, and some of them have already been used in applications such as 2-K-theory. Since there are so many ways of stating coherence, we devote the third part of this section to identifying three variations of coherence that we will prove later.  The first is a theorem stating that all diagrams of 2-cells commute in free symmetric monoidal bicategories.  This statement of coherence is often the most important, as it can be used to prove that particular diagrams of constraint 2-cells in non-free symmetric monoidal bicategories commute by lifting them to a free object, determining that they commute there, and then concluding that the original diagram must commute as well since it is the 
image of a commuting diagram.  The second statement of coherence is that the free symmetric monoidal bicategory on one object is equivalent to the symmetric groups, seen as a discrete symmetric monoidal bicategory.  This is a direct consequence of the first form of coherence.  Finally, we show that every symmetric monoidal bicategory is equivalent to a strict one.

The second section of this paper is concerned with the relationship between symmetric monoidal bicategories and $E_{n}$ spaces for $n \geq 3$ (and hence between symmetric monoidal bicategories and $n$-fold loop spaces).  We prove results in both directions:  we construct an $E_{\infty}$ structure on the classifying space of a symmetric monoidal bicategory, and we show that the fundamental 2-groupoid of an algebra for an $E_{n}$ operad has a sylleptic structure when $n=3$ and a symmetric structure when $n \geq 4$.  The first of these results is proved using $\Gamma$-space techniques, and it is quite an interesting question to do so using operadic techniques as well; this would likely require a significant amount of 2-dimensional algebra, as the natural structures all seem to come as pseudo-algebras rather than algebras over an operad (or even pseudo-operad).  The theorems which give additional structure to the fundamental 2-groupoids of $E_{n}$ algebras are easier the larger $n$ is.  The symmetric case follows
immediately, while the sylleptic case requires some uninteresting but substantial calculation.  This is purely a consequence of the structure of the higher homotopy groups of configuration spaces.

We expect further topological applications to follow in the future.  The proof that the homotopy category of stable 1-types is equivalent to the homotopy category of Picard categories by Niles Johnson and the second author \cite{JO} uses the coherence theorem for symmetric monoidal categories in a fundamental way.  While extending this to dimension two will certainly involve additional complications, the coherence theorem for symmetric monoidal bicategories is a necessary first step in that program.

The third section is focused on the proof of the coherence theorems. The first part of this section focuses on certain technical aspects of the proof of the coherence theorem, and relies on the use of techniques from the theory of positive braids.  We use these algebraic results in order to replace the free symmetric monoidal bicategory on one object with a biequivalent one whose 1-cells are only the positive braids.  The second and third parts implement a rewriting strategy, and the fourth finishes the proof of our coherence theorems.

The strategy that we use is in some ways quite different from that used to prove coherence for symmetric monoidal categories.  There, Mac Lane \cite{maclane-assoc} had proved the theorem for the symmetric case before braided monoidal categories had even been defined as an independent structure of interest.  Our method, on the other hand, uses both the coherence results from the braided case as well as some purely algebraic results about positive braids.  Additionally, it should be clear that starting in $\S$3.1, our techniques rely heavily on the symmetric structure; in particular, it is not clear that the strategy adopted here could be used to prove a coherence theorem for sylleptic monoidal bicategories.  Since in the sylleptic case there are diagrams of 2-cells which do not commute (for example, the extra symmetry axiom), a coherence theorem for this structure would be quite interesting but also likely very difficult.

The authors would like to thank Joan Birman, Fred Cohen, Benson Farb, Niles Johnson, Mikhail Kapranov, and Peter May for conversations that contributed to the completion of this paper.  The first author would also like to thank the University of Chicago for its hospitality in November 2011, and the second author would like to extend a similar thank you to the University of Sheffield regarding a visit in September 2012.

\section{Background}

Here we collect together some background information.  The focus is on symmetric monoidal bicategories, so we give definitions of sylleptic and symmetric monoidal bicategories as well as the functors between them in the first part, while the second part gives some examples of symmetric monoidal bicategories that occur quite naturally.  The reader entirely unfamiliar with monoidal bicategories should look elsewhere first for the relevant definitions; our recommendations include \cite{GPS} and \cite{gurski-braid}.

\subsection{Definitions}

This section collects together the key definitions we will use in our proof of coherence, namely the definition of a symmetric monoidal bicategory, the definition of a symmetric monoidal functor, and the definition of the category of symmetric monoidal bicategories and strict symmetric monoidal functors between them.  In order to keep large diagrams to a minimum, we have assumed a rather high base of knowledge, namely the definition of a braided monoidal bicategory as well as the definition of braided monoidal functors between them.  The reader looking for more background, as well as these definitions, should consult \cite{gurski-braid}.

While the focus of this paper is on the symmetric case, we do prove a theorem about sylleptic monoidal bicategories in  $\S$\ref{pi}.  To be explicit, we remind the reader of that definition as well.

\noindent \textbf{Definition.}  A \textit{sylleptic monoidal bicategory} $X$ consists of an underlying braided monoidal bicategory $(X, \otimes, I, \mathbf{a}, \mathbf{l}, \mathbf{r}, \pi, \mu, \rho, \lambda, R, R_{-|-,-}, R_{-,-|-})$ together with an invertible modification $v: R \circ R \Rightarrow 1$ with components
$v_{xy}:R_{yx}R_{xy} \Rightarrow 1$ satisfying the following axioms; here we use the convention that if $\alpha$ is a 2-cell, then $\widehat{\alpha}$ denotes its mate. \\
\noindent \textbf{Syllepsis axioms:}
\[
\def\objectstyle{\scriptstyle}
\def\labelstyle{\scriptstyle}
\xy0;/r.20pc/:
{\ar_{R^{\dot}1} (0,0)*+{(ab)c}; (15,10)*+{(ba)c} };
{\ar@/^1.5pc/^{R1} (0,0)*+{(ab)c}; (15,10)*+{(ba)c} };
{\ar^{a} (15,10)*+{(ba)c}; (35,10)*+{b(ac)} };
{\ar_{1R^{\dot}} (35,10)*+{b(ac)}; (50,0)*+{b(ca)} };
{\ar@/^1.5pc/^{1R} (35,10)*+{b(ac)}; (50,0)*+{b(ca)} };
{\ar_{a} (0,0)*+{(ab)c}; (15,-10)*+{a(bc)} };
{\ar_{R^{\dot}} (15,-10)*+{a(bc)}; (35,-10)*+{(bc)a} };
{\ar_{a} (35,-10)*+{(bc)a}; (50,0)*+{b(ca)} };
{\ar@{=>}^{\widehat{R}_{(a|b,c)}} (25, 5)*{}; (25,-5)*{} };
(60,0)*{=}; (6,8.5)*{\scriptscriptstyle \Downarrow \widehat{v}_{ab}1};(44,8.5)*{\scriptscriptstyle \Downarrow 1\widehat{v}_{ac}};
{\ar^{R1} (70,0)*+{(ab)c}; (85,10)*+{(ba)c} };
{\ar^{a} (85,10)*+{(ba)c}; (105,10)*+{b(ac)} };
{\ar^{1R} (105,10)*+{b(ac)}; (120,0)*+{b(ca)} };
{\ar_{a} (70,0)*+{(ab)c}; (85,-10)*+{a(bc)} };
{\ar^{R} (85,-10)*+{a(bc)}; (105,-10)*+{(bc)a} };
{\ar@/_1.5pc/_{R^{\dot}} (85,-10)*+{a(bc)}; (105,-10)*+{(bc)a} };
{\ar_{a} (105,-10)*+{(bc)a}; (120,0)*+{b(ca)} };
{\ar@{=>}^{R_{(a,b|c)}} (95, 5)*{}; (95,-5)*{} };
(95,-14)*{\scriptscriptstyle \Downarrow \widehat{v}_{a,bc}}
\endxy
\]
\[
\def\objectstyle{\scriptstyle}
\def\labelstyle{\scriptstyle}
\xy0;/r.20pc/:
{\ar_{1R^{\dot}} (0,0)*+{a(bc)}; (15,10)*+{a(cb)} };
{\ar@/^1.5pc/^{1R} (0,0)*+{a(bc)}; (15,10)*+{a(cb)} };
{\ar^{a^{\centerdot}} (15,10)*+{a(cb)}; (35,10)*+{(ac)b} };
{\ar_{R^{\dot}1} (35,10)*+{(ac)b}; (50,0)*+{(ca)b} };
{\ar@/^1.5pc/^{R1} (35,10)*+{(ac)b}; (50,0)*+{(ca)b} };
{\ar_{a^{\centerdot}} (0,0)*+{a(bc)}; (15,-10)*+{(ab)c} };
{\ar_{R^{\dot}} (15,-10)*+{(ab)c}; (35,-10)*+{c(ab)} };
{\ar_{a^{\centerdot}} (35,-10)*+{c(ab)}; (50,0)*+{(ca)b} };
{\ar@{=>}^{\widehat{R}_{(a,b|c)}} (25, 7)*{}; (25,-7)*{} };
(60,0)*{=}; (6,8.5)*{\scriptscriptstyle \Downarrow 1 \widehat{v}_{bc}};(44,8.5)*{\scriptscriptstyle \Downarrow \widehat{v}_{ac}1};
{\ar^{1R} (70,0)*+{a(bc)}; (85,10)*+{a(cb)} };
{\ar^{a^{\centerdot}} (85,10)*+{a(cb)}; (105,10)*+{(ac)b} };
{\ar^{R1} (105,10)*+{(ac)b}; (120,0)*+{(ca)b} };
{\ar_{a^{\centerdot}} (70,0)*+{a(bc)}; (85,-10)*+{(ab)c} };
{\ar^{R} (85,-10)*+{(ab)c}; (105,-10)*+{c(ab)} };
{\ar@/_1.5pc/_{R^{\dot}} (85,-10)*+{(ab)c}; (105,-10)*+{c(ab)} };
{\ar_{a^{\centerdot}} (105,-10)*+{c(ab)}; (120,0)*+{(ca)b} };
{\ar@{=>}^{R_{(a|b,c)}} (95, 5)*{}; (95,-5)*{} };
(95,-14)*{\scriptscriptstyle \Downarrow \widehat{v}_{ab,c}}
\endxy
\]

A \textit{symmetric monoidal bicategory} is a sylleptic monoidal bicategory satisfying the one additional axiom below.

\noindent \textbf{Symmetry axiom:}
\[
\xy
{\ar^{R} (0,0)*+{ab}; (10,15)*+{ba} };
{\ar^{R} (10,15)*+{ba}; (30,15)*+{ab} };
{\ar^{R} (30,15)*+{ab}; (40,0)*+{ba} };
{\ar_{R} (0,0)*+{ab}; (40,0)*+{ba} };
{\ar_{1} (0,0)*+{ab}; (30,15)*+{ab} };
(13,10.5)*{\Downarrow v_{ab}}; (25,5)*{\cong};
{\ar^{R} (50,0)*+{ab}; (60,15)*+{ba} };
{\ar^{R} (60,15)*+{ba}; (80,15)*+{ab} };
{\ar^{R} (80,15)*+{ab}; (90,0)*+{ba} };
{\ar_{R} (50,0)*+{ab}; (90,0)*+{ba} };
{\ar_{1} (60,15)*+{ba}; (90,0)*+{ba} };
(65,5)*{\cong}; (77,10.5)*{\Downarrow v_{ba}};
(45,7.5)*{=}
\endxy
\]

\begin{rem}
Note that we have used slightly different notation from previous authors, notably Day-Street \cite{DS} and McCrudden \cite{mccrudden}. We have changed the direction of $v$ in order to make our rewriting argument later more natural, and we have expressed the syllepsis axiom using mates explicitly to present a more pleasing geometry.  It should be noted that this syllepsis axiom is equivalent to those of other authors, and most importantly that other versions of the syllepsis axioms do not include any instances of $R^{\dot}$; that fact will be used later.
\end{rem}

Recall that for an adjoint equivalence $f \dashv_{eq} f^{\dot}$, the unit is a 2-cell $\eta:1 \Rightarrow f^{\dot}f$ and the counit is a 2-cell $\varepsilon:ff^{\dot} \Rightarrow 1$.

\noindent \textbf{Definition.}  A symmetric monoidal bicategory $X$ is a \textit{strict symmetric monoidal bicategory} if
\begin{itemize}
\item the underlying braided monoidal bicategory of $X$ is strict as a braided monoidal bicategory, i.e., is a braided monoidal 2-category in the sense of Crans \cite{crans}, and
\item the adjoint equivalences $R_{xy} \dashv_{eq} R_{xy}^{\dot}$ is given by
\[
\begin{array}{rcl}
R_{xy}^{\dot} & = & R_{yx}, \\
\eta & = & v_{xy}^{-1}, \\
\varepsilon & = & v_{yx}.
\end{array}
\]
\end{itemize}

\begin{rem}
One might think that we should require of a strict symmetric monoidal bicategory that $v$ satisfy additional  axioms when one of the objects involved is the unit for the tensor product, but these equations are automatically satisfied as they are a byproduct of the syllepsis axioms.
\end{rem}

\noindent \textbf{Definition.}  Let $X,Y$ be symmetric monoidal bicategories.  A \textit{symmetric monoidal functor} $F:X \rightarrow Y$ is a braided monoidal functor satisfying the one additional axiom below.
\[
\def\objectstyle{\scriptstyle}
\def\labelstyle{\scriptstyle}
\xy
{\ar^{R} (0,0)*+{FaFb}; (20,0)*+{FbFa} };
{\ar^{R} (20,0)*+{FbFa}; (40,-15)*+{FaFb} };
{\ar^{\chi} (40,-15)*+{FaFb}; (40,-35)*+{F(ab)} };
{\ar_{\chi} (0,0)*+{FaFb}; (0,-20)*+{F(ab)} };
{\ar^{FR} (0,-20)*+{F(ab)}; (20,-20)*+{F(ba)} };
{\ar_{\chi} (20,0)*+{FbFa}; (20,-20)*+{F(ba)} };
{\ar^{FR} (20,-20)*+{F(ba)}; (40,-35)*+{F(ab)} };
{\ar|{F(RR)} (0,-20)*+{F(ab)}; (40,-35)*+{F(ab)} };
{\ar@/_1.5pc/_{F1} (0,-20)*+{F(ab)}; (40,-35)*+{F(ab)} };
(10,-10)*{\Downarrow U}; (30,-17.5)*{\Downarrow U}; (20,-23)*{\cong}; (20,-31)*{\Downarrow Fv};
(50,-20)*{=};
{\ar^{R} (60,0)*+{FaFb}; (80,0)*+{FbFa} };
{\ar^{R} (80,0)*+{FbFa}; (100,-15)*+{FaFb} };
{\ar^{\chi} (100,-15)*+{FaFb}; (100,-35)*+{F(ab)} };
{\ar_{\chi} (60,0)*+{FaFb}; (60,-20)*+{F(ab)} };
{\ar@/_1.5pc/_{F1} (60,-20)*+{F(ab)}; (100,-35)*+{F(ab)} };
{\ar^{1} (60,-20)*+{F(ab)}; (100,-35)*+{F(ab)} };
{\ar@/_1.5pc/_{1} (60,0)*+{FaFb}; (100,-15)*+{FaFb} };
(80,-7)*{\Downarrow v}; (80,-20)*{\cong}; (80,-31)*{\cong}
\endxy
\]

\noindent \textbf{Definition.}  A symmetric monoidal functor $F:X \rightarrow Y$ is \textit{strict} if it is strict as a braided monoidal functor (i.e., strictly preserves all of the braided monoidal structure) and has the property that $Fv_{X}=v_{Y}$.

\noindent \textbf{Definition.}  A symmetric monoidal functor $F:X \rightarrow Y$ is a \textit{symmetric monoidal biequivalence} if its underlying functor of bicategories is a biequivalence.

\begin{rem}
This definition of symmetric monoidal biequivalence is logically equivalent to requiring the existence of a symmetric monoidal pseudoinverse by results of \cite{gurski-equiv}.
\end{rem}

\noindent \textbf{Definition.}  The category $\mb{SymMonBicat}$ has objects symmetric monoidal bicategories and morphisms strict symmetric monoidal functors.

\begin{rem}
Just as the standard composition of weak functors between tricategories does not give a category of tricategories and functors, there is also not a category of monoidal bicategories and (weak) monoidal functors between.  Thus the category of symmetric monoidal bicategories must have strict functors as its morphisms, and the composition law used takes a pair of strict symmetric monoidal functors, composes the underlying functors of bicategories, and then imposes the unique strict symmetric monoidal structure on the resulting composite functor.  The corresponding discussion for tricategories can be found in the upcoming \cite{gurski-tri}.
\end{rem}

Before moving on, we draw the reader's attention to the coherence theorem for braided monoidal bicategories, found in \cite{gurski-braid} and stated below.

\begin{thm}
In the free braided monoidal bicategory on a single object, two parallel 1-cells are either uniquely isomorphic or the set of 2-cells between them is empty.  Two parallel 1-cells are isomorphic if and only if they have the same underlying braid.
\end{thm}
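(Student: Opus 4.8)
The plan is to follow the template for plain monoidal bicategories and then graft on the braiding. First I would invoke the coherence theorem for monoidal bicategories: the underlying monoidal bicategory of the free braided one is biequivalent to a Gray-monoid, and transporting the braiding data along this biequivalence produces a braided monoidal Gray-monoid that is braided-biequivalent to the free braided monoidal bicategory on one object. In this semistrict model the associativity and unit constraints are trivial, so every 1-cell whose source and target are $n$-fold powers of the generating object is a composite of the braiding adjoint equivalences $R_{xy}$ and their pseudo-inverses $R^{\centerdot}_{xy}$, and the only 2-cells carrying content are the hexagonators $R_{-|-,-}$, $R_{-,-|-}$, their mates and inverses, and the naturality 2-cells of $R$.

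Second, I would define the underlying braid of such a 1-cell by sending each generating braiding $R_{x_i x_{i+1}}$ to the positive Artin generator $\sigma_i$ and each pseudo-inverse to $\sigma_i^{-1}$, reading off a geometric braid in the standard way. This gives a function from the 1-cells of the model into the braid group $B_n$. Surjectivity is immediate, since every braid word is realized by some composite, so the theorem reduces to two claims: that the underlying braid is a complete invariant for 1-cells up to isomorphism, and that any two parallel isomorphic 1-cells are \emph{uniquely} isomorphic.

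Third, and this is the crux, I would set up a rewriting system whose objects are the 2-cells of the model and whose rewrite rules are generated by the hexagonators (which encode the braid relation $\sigma_i\sigma_{i+1}\sigma_i = \sigma_{i+1}\sigma_i\sigma_{i+1}$ together with far-commutativity) and by naturality of $R$. The goal is to show that every 2-cell reduces to a canonical form depending only on its source, target, and underlying braid; by Newman's lemma it suffices to prove termination together with local confluence, and local confluence amounts to resolving the critical pairs arising when two rules overlap. Here the theory of positive braids is the natural organizing tool: one uses a presentation of $B_n$ and the divisibility theory of positive braids to enumerate the overlaps, and each resulting critical pair is closed precisely by one of the braided monoidal bicategory axioms. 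Once confluence is established, uniqueness of 2-cells between parallel 1-cells follows, and the classification of isomorphic 1-cells by their underlying braid drops out of the completeness of the braid invariant.

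The hard part will be the critical-pair analysis in the third step. The braided bicategory axioms are numerous and geometrically intricate, and one must check that every overlap of rewrite rules — especially the mixed overlaps between a hexagonator and a naturality move, and those involving the adjoint-equivalence structure $R_{xy} \dashv R^{\centerdot}_{xy}$ — is resolved without producing a second normal form. Controlling this is exactly where the reduction to positive braids pays off, since it replaces an a priori unbounded collection of overlaps with the finite combinatorial data coming from a Garside-type structure on $B_n$.
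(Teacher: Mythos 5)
First, be aware that this theorem is not proved in the present paper at all: it is quoted from \cite{gurski-braid}, and the proof there is geometric rather than syntactic. In that argument the free braided monoidal bicategory on one object is compared with fundamental 2-groupoids built from configuration spaces (little 2-cubes), 2-cells are interpreted as homotopy classes of homotopies between geometric braids, and uniqueness of 2-cells comes from asphericity of configuration spaces (vanishing of $\pi_2$). Your proposal is therefore a genuinely different, rewriting-theoretic route --- but it contains a real gap.

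The gap is termination. You plan to apply Newman's lemma, which requires a well-founded measure compatible with the rewrite rules. But every move available at the braided level --- the hexagonator moves encoding $\sigma_i\sigma_{i+1}\sigma_i \leftrightarrow \sigma_{i+1}\sigma_i\sigma_{i+1}$, far-commutativity, and naturality of $R$ --- is invertible and preserves the length of the braid word, so no crossing-counting measure decreases; orienting the Yang--Baxter relation in one direction does not yield a confluent system without completion, and your system must additionally rewrite the pseudo-inverses $R^{\dot}$ and the unit/counit 2-cells of the adjoint equivalences $R \dashv_{eq} R^{\dot}$, which do not fit the oriented-rule format at all. This obstruction is exactly what shapes the present paper: its rewriting argument is run only in the \emph{symmetric} case, where the syllepsis supplies genuinely length-decreasing reductions $\sigma_i^2 \leadsto 1$ (type V), the induction in Theorem \ref{completered} is over the number of such reductions, and all the length-preserving YB/C moves are disposed of by invoking braided coherence as a black box (Lemma \ref{l=0}). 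In other words, your proposal uses as its engine precisely the step that the authors do not attempt by rewriting but instead import from the geometric proof of \cite{gurski-braid}; the paper even remarks that its techniques rely heavily on the symmetric structure and are not expected to apply below it. To salvage a syntactic proof you would need either a Garside-normal-form rewriting system (whose rules are not orientations of the braid relations, and whose critical-pair/coherence analysis is quite different from what you sketch) or an argument for uniqueness of 2-cells that does not factor through termination --- for instance, the topological one.
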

This theorem will be used quite heavily in our proof of coherence for the symmetric case.  We can give an informal description of the free braided monoidal bicategory on one object.  It has 0-cells which are natural numbers, and there are 1-cells $m \rightarrow n$ only when $m=n$; in that case, such a 1-cell consists of a braid on $n$ strands.  The 2-cells have a much more complicated description, being generated from the many different kinds of 2-cells necessary to give a braided monoidal structure, but the coherence theorem says that these 2-cells should be thought of as homotopy classes of homotopies between braids.  This geometric approach to coherence is discussed at length in \cite{gurski-braid}.




\subsection{Examples}

This section gives a variety of examples of naturally-occurring symmetric monoidal bicategories.  Some of these examples require a considerable amount of work in order to verify the entire symmetric monoidal structure.

\noindent \textbf{Example 1.}  In \cite{CKWW}, the authors verify that every bicategory with products, taken in the sense of bilimits, can be viewed as a symmetric monoidal bicategory.

\noindent \textbf{Example 2.} If $\mathcal{V}$ is a symmetric monoidal category, then the 2-category $\mathcal{V}$-$\mb{Cat}$ can be given a symmetric monoidal structure where the tensor product of two $\mathcal{V}$-categories is obtained by taking the product of the object sets and the tensor product (in $\mathcal{V}$) of hom-objects.  This symmetric structure is actually quite strict, as it is really the $\mb{Cat}$-enrichment of the symmetric monoidal structure on the \textit{category} of $\mathcal{V}$-categories and $\mathcal{V}$-functors.

\noindent \textbf{Example 3.}  Using the double-categorical methods of \cite{shulman}, the same tensor product as in the previous example can be used to produce a symmetric monoidal structure on the bicategory of $\mathcal{V}$-categories and $\mathcal{V}$-profunctors between them.  This includes the example of the bicategory of rings, bimodules, and bimodule homomorphisms, in which case the tensor product is given by the usual tensor product of abelian groups extended to rings (for the 0-cells) or bimodules (for the 1-cells).

\noindent \textbf{Example 4.}  Let $\mb{nCob}$ denote the bicategory with objects closed $n$-manifolds, 1-morphisms given by cobordisms, and 2-morphisms given by diffeomorphisms between cobordisms. As shown in \cite{shulman}, this bicategory is symmetric monoidal, with product given by disjoint union.

\noindent \textbf{Example 5.}  Given a bipermutative category $\mathcal{R}$, one can construct the bicategory $\mathcal{GL}(\mathcal{R})$ which is a skeletal model for the bicategory of modules over $\mathcal{R}$. In \cite{osorno}, the second author shows that $\mathcal{GL}(\mathcal{R})$ is symmetric monoidal. The group completion of the classifying space of this bicategory gives 2-K-theory, as defined in \cite{BDR}. When $\mathcal{R}$ is the category of vector spaces over $\mathbb{C}$, we get the bicategory of 2-vector spaces of Kapranov and Voevodsky \cite{KV}.

This paper adds two new classes of examples.

\noindent \textbf{New example 1.}  For our proof of coherence, we carefully study free symmetric monoidal bicategories.  These can be generated by a wide variety of data:  an underlying set of objects, an underlying bicategory, or an underlying monoidal, braided monoidal, or sylleptic monoidal bicategory.  We focus primarily on the first case, but give the construction for the free symmetric monoidal bicategory on an underlying bicategory.

\noindent \textbf{New example 2.}  For any space $X$, one can construct its fundamental 2-groupoid $\Pi_{2}X$ as in \cite{gurski-braid} or \cite{MS}.  We show that if $X$ is equipped with an action of the little $n$-cubes operad $\mathscr{C}_{n}$ for $n \geq 4$, then $\Pi_{2}X$ comes equipped with a symmetric monoidal structure.  We also show that $\Pi_{2}X$ comes equipped with a sylleptic monoidal structure when $X$ is an algebra over the little 3-cubes operad $\mathscr{C}_{3}$.  These results, combined with those in \cite{gurski-braid} give a complete description of the monoidal structure on $\Pi_{2}X$ for $X$ an algebra over any $E_{n}$ operad.

\subsection{Statements of coherence}

This section will give the statements of three different coherence theorems for symmetric monoidal bicategories.  The first is an identification of free symmetric monoidal bicategories, thus we will need to discuss the construction of such free objects first.  The second will identify the free symmetric monoidal bicategory on one object with the symmetric monoidal category of finite sets and bijections (or a skeletal version of such).  The third and final form of coherence will be a strictification theorem.

\noindent \textbf{Definition.}  Let $B$ be a bicategory.  The \textit{free symmetric monoidal bicategory} $\mathcal{S}B$ generated by $B$ has objects which are inductively constructed using the objects of $B$ and a new unit object $I$ by using a binary tensor product.  The 1-cells are inductively constructed from the 1-cells of $B$, associativity and unit 1-cells for the monoidal structure (these will be part of adjoint equivalences later, so we need 1-cells going in each direction), and braiding 1-cells $R_{xy}$ and $R_{xy}^{\dot}$; to construct new 1-cells from these, we can compose 1-cells with matching source and target or tensor 1-cells together.  The 2-cells are then inductively constructed from the 2-cells in $B$, the 2-cells for the monoidal structure ($\pi, \mu, \lambda, \rho$ and their inverses), the 2-cells for the braided structure ($R_{--|-}, R_{-|--}$ and their inverses), and the 2-cells for the symmetric structure ($v$ and its inverse), once again by composing along either 0- or 1-cell
boundaries or tensoring; the 2-cells are then quotiented by the relations forcing $\mathcal{S}B$ to be a symmetric monoidal bicategory and those which make the obvious inclusion $B \hookrightarrow \mathcal{S}B$ a strict functor of bicategories.

\noindent \textbf{Definition.} The category of bicategories and strict functors will be denoted $\mb{Bicat}_{s}$.

\begin{prop}
The assignment $B \mapsto \mathcal{S}B$ is the function on objects of a functor $\mb{Bicat}_{s} \rightarrow \mb{SymMonBicat}$ which is left adjoint to the forgetful functor.  The resulting adjunction is monadic.
\end{prop}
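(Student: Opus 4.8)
The plan is to first establish the adjunction by a universal-property argument and then deduce monadicity from Beck's theorem.

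\textbf{The adjunction.} For a bicategory $B$, let $\eta_B \colon B \hookrightarrow U\mathcal{S}B$ be the inclusion of generators, which is strict by the last clause in the construction of $\mathcal{S}B$. I claim this is the unit of the adjunction, i.e. that for every symmetric monoidal bicategory $Y$ and every strict functor $g \colon B \to UY$ there is a unique strict symmetric monoidal functor $\widehat{g} \colon \mathcal{S}B \to Y$ with $U\widehat{g} \circ \eta_B = g$. Existence is proved by induction over the inductive construction of the cells of $\mathcal{S}B$: send the generating objects, $1$-cells and $2$-cells coming from $B$ via $g$, send the unit object $I$ and each structure cell ($\mathbf{a}, \mathbf{l}, \mathbf{r}, R, R^{\dot}$, $\pi, \mu, \lambda, \rho$, the braiding modifications $R_{-|-,-}, R_{-,-|-}$, the syllepsis $v$, and all inverses) to the corresponding piece of the structure of $Y$, and extend over tensors and composites using the operations of $Y$. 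This assignment respects the defining relations of $\mathcal{S}B$ precisely because those relations are the symmetric monoidal bicategory axioms together with the equations making $\eta_B$ strict, all of which hold in $Y$; hence $\widehat{g}$ is well defined and strict symmetric monoidal. Uniqueness is immediate, since a strict symmetric monoidal functor out of $\mathcal{S}B$ must send each structure cell to the corresponding one in $Y$ and must agree with $g$ on the image of $\eta_B$, and these cells generate everything. This universal property makes $\mathcal{S}$ a functor (set $\mathcal{S}f = \widehat{\eta_{B'} \circ f}$ for a strict $f \colon B \to B'$) and exhibits $\mathcal{S} \dashv U$ with unit $\eta$, the triangle identities and functoriality all following from the uniqueness clause.

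\textbf{Monadicity.} Write $\mathbb{T} = U\mathcal{S}$ for the induced monad. By Beck's monadicity theorem it suffices, given the left adjoint just produced, to show that $U$ creates coequalizers of $U$-split pairs. As a quick sanity check, $U$ reflects isomorphisms: if $F \colon X \to Y$ is strict symmetric monoidal with $UF$ invertible in $\mb{Bicat}_{s}$, then $(UF)^{-1}$ carries a unique strict symmetric monoidal structure (its constraints being identities) and is a two-sided inverse to $F$ in $\mb{SymMonBicat}$.

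\textbf{Creation of $U$-split coequalizers.} This is where essentially all the work lies. Let $(\phi,\psi)$ be a parallel pair in $\mb{SymMonBicat}$ whose underlying pair admits a split coequalizer $q \colon UB \to Q$ in $\mb{Bicat}_{s}$, with splitting $(s,t)$. Because a split coequalizer is \emph{absolute}, it is preserved by every functor used to express the structure: the splitting is inherited componentwise by each finite-product functor (so that $q \times q$, $q \times q \times q$, and so on are again split, hence absolute, coequalizers), whence each structure operation on $UB$ descends uniquely along $q$ to an operation on $Q$, and each equational axiom descends because the relevant power of $q$ is epi. One then checks that the transported operations assemble into a symmetric monoidal structure on $Q$ for which $q$ is strict symmetric monoidal and is the coequalizer in $\mb{SymMonBicat}$; the uniqueness of this lift is exactly the creation property. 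Beck's theorem then yields that the comparison functor $\mb{SymMonBicat} \to \mb{Bicat}_{s}^{\mathbb{T}}$ is an equivalence, and in fact an isomorphism of categories, since by the algebra laws a $\mathbb{T}$-algebra structure $a \colon \mathcal{S}B \to B$ is, under restriction along the generating cells, precisely the data of a symmetric monoidal structure on $B$ that is strictly preserved by morphisms.

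\textbf{The main obstacle.} The genuinely delicate point is carrying out the descent of the \emph{higher-dimensional} data --- the associativity, unit and braiding adjoint equivalences and the constraint modifications $\pi,\mu,\lambda,\rho,v$ and the braiding modifications --- whose arities involve the underlying bicategory both covariantly and contravariantly, so that the naive ``apply an absolute-coequalizer-preserving endofunctor'' argument must be applied at the level of components (equivalently, through the algebra structure map) rather than to the operations wholesale. Verifying that every one of these many cells and every axiom descends correctly is lengthy but routine once the absoluteness of the split coequalizer is in hand, and it is the only substantive difficulty in the proof.
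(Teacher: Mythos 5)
The paper never actually proves this proposition: it is stated as a routine fact immediately after the construction of $\mathcal{S}B$, with no proof environment at all (the nearby remark only defers the discussion of composition in $\mb{SymMonBicat}$ to the tricategorical literature). So there is no argument of the paper's to compare yours against; judged on its own merits, your proof follows the standard route and its skeleton is correct. The first half --- the universal property of the syntactic free construction, proved by induction over generators and checked against the defining relations (which are exactly the symmetric monoidal bicategory axioms plus strictness of the inclusion), then packaged into an adjunction via universal arrows --- is precisely what the inductive definition of $\mathcal{S}B$ is designed to support. For the second half, invoking Beck's theorem in the ``creates coequalizers of $U$-split pairs'' form is the right choice, and it does yield the strict conclusion (comparison functor an isomorphism, not merely an equivalence). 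The only place you remain at the level of a sketch is the step you yourself flag as the main obstacle, and here your worry is somewhat overstated: the descent is more mechanical than your discussion of ``covariant and contravariant arities'' suggests, because in a split coequalizer $q \colon UB \to Q$ in $\mb{Bicat}_{s}$ the splitting $s \colon Q \to UB$ is itself a \emph{strict} functor with $qs = 1$. Consequently every piece of structure --- the tensor pseudofunctor with its constraints, the components and naturality $2$-cells of the adjoint equivalences $\mathbf{a}, \mathbf{l}, \mathbf{r}, R$, and the modifications $\pi, \mu, \lambda, \rho, v$ --- is transported by the single recipe $q \circ (\mbox{datum in } UB) \circ s^{\times n}$; strictness of $\phi, \psi$ together with the splitting identities $\phi t = 1$ and $\psi t = sq$ shows the transported structure makes $q$ strict symmetric monoidal; every axiom descends because $q$ is a (split) epimorphism on cells of each dimension; and uniqueness of the lift is forced by strictness and surjectivity of $q$. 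Modulo writing out that long but genuinely routine list of verifications, your proof is correct, and it supplies an argument that the paper omits entirely.
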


\begin{rem}
We are primarily interested in the case when $B$ is a set, viewed as a discrete bicategory, particularly a terminal set $*$.  In this case, we write $\mathcal{S}$ for $\mathcal{S}*$.
\end{rem}

The first coherence theorem for symmetric monoidal bicategories is the following.

\begin{thm}\label{coherencesmb}
In $\mathcal{S}$, every diagram of 2-cells commutes.  Equivalently, between every pair of parallel 1-cells there is either a unique invertible 2-cell or no 2-cells at all.  Moreover, parallel 1-cells are isomorphic if and only if they have the same underlying permutation.
\end{thm}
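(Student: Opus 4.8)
The plan is to reduce the statement to a purely combinatorial confluence problem about positive braids, using the coherence theorem for braided monoidal bicategories as the starting point. The underlying braided monoidal bicategory of $\mathcal{S}$ is the free braided monoidal bicategory on one object, so by that theorem its 1-cells $n \to n$ are braids on $n$ strands and its 2-cells are already completely understood: parallel 1-cells carry a unique invertible 2-cell precisely when they represent the same braid. The only genuinely new datum in $\mathcal{S}$ is the syllepsis $v$, whose component $v_{xy} \colon R_{yx}R_{xy} \Rightarrow 1$ witnesses that a full positive twist is isomorphic to the identity. The entire content of the theorem is therefore to control how $v$ interacts with the braided structure, and in particular to show that the \emph{permutation} underlying a braid, rather than the braid itself, is the complete isomorphism invariant of a 1-cell in $\mathcal{S}$.

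First I would show that $\mathcal{S}$ is biequivalent, as a symmetric monoidal bicategory, to a smaller model $\mathcal{S}'$ whose 1-cells are only the positive braids. This is possible because $v$, together with the adjoint-equivalence data $R_{xy} \dashv_{eq} R_{xy}^{\dot}$, exhibits $R_{yx}$ as a pseudo-inverse of $R_{xy}$; since pseudo-inverses are unique up to isomorphism this gives $R_{xy}^{\dot} \cong R_{yx}$, so every inverse crossing, and hence every braid 1-cell, is isomorphic to a positive one. Passing to the biequivalent model loses nothing, as biequivalences reflect the conclusion we want. Working in $\mathcal{S}'$, I would then set up a rewriting system on positive-braid words in the standard generators $\sigma_i$: the elementary rewrites are the braid relations, supplied as invertible constraint 2-cells by the braided structure, together with the deletion of a factor $\sigma_i^2$, supplied by $v_{xy}$.

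The core of the argument is to prove that this rewriting system is terminating and confluent. Termination follows from a well-founded order on positive braids drawn from the theory of positive braid monoids, since each application of $v$ strictly shortens a word while the length-preserving braid relations can be controlled by a Garside-style normal form. Confluence is established by a critical-pair analysis: overlaps of two braid relations are resolved by braided coherence, while the genuinely new overlaps, namely two applications of $v$ on adjacent or overlapping crossings and an application of $v$ overlapping a braid relation, are exactly what the syllepsis axioms and the symmetry axiom are designed to resolve. Diamond-lemma bookkeeping then yields a unique normal form for each positive braid, and that normal form is a reduced word for the underlying permutation. Consequently two 1-cells are connected by a 2-cell if and only if their normal forms agree, that is, if and only if they have the same underlying permutation; the connecting 2-cell is then forced to be unique and is invertible since every generating 2-cell is. The three equivalent formulations in the statement follow at once.

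I expect the confluence step, specifically the critical pairs involving $v$, to be the main obstacle. Braided coherence disposes of all two-dimensional ambiguity that is \emph{one-dimensional in origin}, but the rewrite coming from $v$ changes the underlying braid, collapsing $\sigma_i^2$, so its overlaps with braid relations are not governed by braided coherence and must be verified against the syllepsis and symmetry axioms directly. A secondary difficulty is choosing a well-founded order that makes the combined system terminate, since the braid relations are length-preserving and only the $v$-moves shorten words; this is where the more delicate results about positive braids are needed. It is also the reason the argument appears special to the symmetric case, where the symmetry axiom is available to close the critical pairs, and not merely the sylleptic one.
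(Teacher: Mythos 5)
Your proposal follows essentially the same route as the paper's proof: passing to a positive-braid model of $\mathcal{S}$ (the paper's $\mathcal{S}_{+}$, built from the adjoint equivalence $R_{xy} \dashv_{eq} R_{yx}$ that the symmetry axiom provides), setting up a rewriting system whose moves are the braid relations plus deletion of $\sigma_{i}^{2}$ realized by $v$, proving a coherent confluence statement (the paper's theorem that any two complete reductions with the same target are equal, established by a marked-braid case analysis of overlapping $v$-moves in which the shared-crossing case is exactly the one closed by the symmetry axiom), and taking minimal positive braids, which biject with permutations, as the normal forms. The only imprecision is your opening claim that the underlying braided monoidal bicategory of $\mathcal{S}$ is the free braided monoidal bicategory on one object --- they share only 0- and 1-cells, since $\mathcal{S}$ has additional 2-cells coming from $v$ --- but the way you actually invoke braided coherence, namely to control the rewrites not involving $v$, is precisely how the paper uses it.
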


\begin{cor}\label{coherenceset}
For any set $X$ seen as a discrete bicategory, the free symmetric monoidal bicategory generated by $X$ has the property that every diagram of 2-cells commutes.
\end{cor}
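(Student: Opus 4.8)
The plan is to deduce the corollary from Theorem~\ref{coherencesmb} by transporting diagrams along the strict symmetric monoidal functor $F\colon \mathcal{S}X \to \mathcal{S}$ induced by the unique map of sets $X \to *$; this functor exists by the functoriality of $\mathcal{S}$ recorded in the preceding proposition. Every generating 2-cell of a free symmetric monoidal bicategory is a constraint cell and hence invertible, so every 2-cell of $\mathcal{S}X$ is invertible and ``every diagram commutes'' is equivalent to there being at most one 2-cell between each parallel pair of 1-cells, matching the phrasing of the theorem. Given any diagram $D$ of 2-cells in $\mathcal{S}X$, its image $FD$ is a diagram in $\mathcal{S}$, which commutes by Theorem~\ref{coherencesmb}; hence any two directed paths in $D$ with common source $f\colon a\to b$ and common target $g\colon a\to b$ become \emph{equal} 2-cells after applying $F$. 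The whole corollary therefore reduces to a single statement: that $F$ is locally faithful, so that equality of the images of two parallel 2-cells forces equality of the 2-cells themselves. Since a fixed diagram mentions only finitely many generating objects, there is no loss in assuming $X$ finite.

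The heart of the proof is this local faithfulness, which I would establish by a unique path-lifting argument in the spirit of covering space theory. The key observation is that the generating 1- and 2-cells of $\mathcal{S}X$ --- associators, unitors, the braidings $R$ and $R^{\centerdot}$, and the constraint modifications $\pi, \mu, \lambda, \rho, R_{-|-,-}, R_{-,-|-}, v$ together with their inverses --- are all \emph{label-blind}: each is indexed by a tuple of objects, but its form depends only on the shapes and positions of those objects, not on which elements of $X$ label the letters. I would package this as a unique lifting property: given a fixed object $a$ of $\mathcal{S}X$, namely a specific word in $X$, and a structural 1-cell $\bar u\colon Fa \to \bar c$ in $\mathcal{S}$, there is a unique structural 1-cell $u\colon a\to c$ of $\mathcal{S}X$ with $Fu=\bar u$, and likewise for structural 2-cells. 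The point is that $a$ rigidifies the labels while the operation carried out by $\bar u$ --- which associator, which transposition of adjacent blocks, and so on --- is recoverable from $\bar u$ and may then be applied to the labelled word $a$ to yield a unique result. Since every object, 1-cell, and 2-cell occurring inside a 2-cell $f\Rightarrow g$ lives over rearrangements of the fixed labelled word $a$, this lifting applies throughout.

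Granting unique lifting, local faithfulness follows. Suppose $\alpha,\beta\colon f\Rightarrow g$ are parallel 2-cells of $\mathcal{S}X$ with $F\alpha=F\beta$. The equality $F\alpha=F\beta$ is witnessed by a finite chain of applications of the symmetric monoidal bicategory axioms in $\mathcal{S}$ connecting a representative of $F\alpha$ to one of $F\beta$. Because the source $a$ lifts and each structural step lifts uniquely, I can lift this entire chain to a chain of axiom-applications in $\mathcal{S}X$ beginning at $\alpha$; by uniqueness of the lift its other endpoint is forced to be $\beta$, so $\alpha=\beta$. Applying this to the two composite 2-cells determined by the two paths of an arbitrary diagram $D$ then yields the corollary.

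The step I expect to be the main obstacle is the unique lifting property, and with it the claim that each axiom instance in $\mathcal{S}$ lifts to a genuine axiom instance in $\mathcal{S}X$. The delicacy is precisely that $F$ is not injective on objects --- words such as $x_1x_2$ and $x_2x_1$ are both collapsed to $**$ --- so a priori distinct cells of $\mathcal{S}X$ could be identified by $F$. Showing that fixing the labelled source word $a$ removes this ambiguity, and checking that every symmetric monoidal bicategory axiom used in $\mathcal{S}$ arises by relabelling from one over the appropriate labelled objects of $\mathcal{S}X$, is where the genuine content lies.
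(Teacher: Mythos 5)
Your proposal is sound, but it takes a genuinely different route from the paper. The paper does not deduce the corollary formally from Theorem \ref{coherencesmb} at all: its proof consists of observing that the entire rewriting argument (positive braids, reductions of types YB/C/V, marked braids, and the induction on the number of type-V reductions) goes through verbatim when the strands of the braids carry labels from $X$, since the labels are inert in all of the combinatorics. You instead treat Theorem \ref{coherencesmb} as a black box and transport diagrams along the relabelling functor $F\colon \mathcal{S}X \to \mathcal{S}$, reducing everything to local faithfulness of $F$, which you establish by a unique-lifting argument on the term model of the free structure: structural generators are determined by their type together with their labelled source, so terms for 1-cells and 2-cells, and likewise instances of the symmetric monoidal bicategory axioms, lift uniquely once the labelled source word is fixed. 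This works --- the subtlety you flag (non-injectivity of $F$ on objects, e.g.\ $x_1x_2$ and $x_2x_1$ both mapping to $**$) is real but is indeed resolved by anchoring lifts at a fixed labelled source, exactly as in covering-space theory, because free 1- and 2-cells are genuinely formal terms with no relations beyond the stated congruence. What your approach buys is modularity: the corollary becomes a purely syntactic consequence of the theorem's statement, with no need to reopen the braid-theoretic proof. What it costs is the careful induction setting up unique lifting and the lifting of each axiom scheme, which is routine but lengthy; the paper's relabelling shortcut avoids this entirely at the price of asking the reader to re-inspect a long argument rather than quoting its conclusion.
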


We can extend this result even further, taking into account generating 1-cells as well as an arbitrary set of objects. This is best accomplished using a wreath product-type construction as in \cite{js}.  We record the result here although we do not give the proof.  This version of coherence is useful to know for theoretical purposes, but in practice it is often obvious how to reduce, via naturality, any question that might require such a theorem to the corresponding coherence theorem with only a set of objects.

\begin{thm}
Let $B$ be a bicategory which is free on an underlying 1-globular set.  Then in $\mathcal{S}B$ every diagram of 2-cells commutes.
\end{thm}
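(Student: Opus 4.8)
The plan is to reduce to Corollary \ref{coherenceset}, the object-only case, by exhibiting a convenient biequivalent model of $\mathcal{S}B$ built as a wreath product, in the spirit of the description of free braided monoidal categories in \cite{js}. Write $G = (G_0, G_1)$ for the generating 1-globular set, so that $B$ is the free bicategory whose 0-cells are $G_0$, whose 1-cells are bracketed composites of elements of $G_1$, and whose only 2-cells are the associativity and unit constraints for composition. The essential feature of this freeness --- and the reason the hypothesis cannot be dropped --- is that, by coherence for bicategories, between any pair of parallel 1-cells of $B$ there is at most one 2-cell, and it is invertible. Thus what we must prove is precisely that this ``locally thin'' property is inherited by $\mathcal{S}B$.

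First I would construct the wreath product $W = \mathcal{S} \wr B$. Its objects are finite words $(x_1, \dots, x_n)$ in $G_0$, with tensor product given by concatenation and unit the empty word. A 1-cell $(x_1, \dots, x_n) \to (y_1, \dots, y_m)$ is empty unless $n = m$, in which case it is a pair $(\sigma, (f_1, \dots, f_n))$ consisting of a permutation $\sigma \in \Sigma_n$ together with a 1-cell $f_i : x_i \to y_{\sigma(i)}$ of $B$ for each $i$; here the permutation is the underlying-permutation invariant attached to a 1-cell of $\mathcal{S}$ by Theorem \ref{coherencesmb}. A 2-cell $(\sigma, (f_i)) \Rightarrow (\tau, (g_i))$ is empty unless $\sigma = \tau$, in which case it is a tuple $(\alpha_1, \dots, \alpha_n)$ of 2-cells $\alpha_i : f_i \Rightarrow g_i$ of $B$. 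The symmetric monoidal structure on $W$ is assembled from the block permutations, which supply the braiding together with all of its structure 2-cells --- governed entirely by the relations in the symmetric groups --- and the composition constraints of $B$.

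The heart of the argument is a normal-form result identifying $\mathcal{S}B$ with $W$ up to symmetric monoidal biequivalence. I would build a symmetric monoidal functor $W \to \mathcal{S}B$ sending $(\sigma,(f_i))$ to the evident composite built from $f_1 \otimes \cdots \otimes f_n$ and the braiding realizing $\sigma$, and show it is a symmetric monoidal biequivalence. Essential surjectivity on 1-cells is a rectification statement: using naturality of the braiding to slide each generating 1-cell past the structural braidings, every 1-cell of $\mathcal{S}B$ is isomorphic, via a 2-cell coming from the structure, to one in this ``permute-then-apply'' form. For fullness and faithfulness on 2-cells one observes that any 2-cell of $\mathcal{S}B$ is a pasting of whiskered and tensored structure 2-cells of the symmetric monoidal structure together with constraint 2-cells of $B$; naturality lets one push the former past the generating 1-cells, reducing the comparison of two such 2-cells to a comparison of their symmetric parts in $\mathcal{S}$ --- settled by Theorem \ref{coherencesmb} --- and of their $B$-parts --- settled by coherence for bicategories. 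Once the biequivalence is in hand the conclusion is immediate: between parallel 1-cells of $W$ there is at most one 2-cell, since $\Sigma_n$ is discrete and $B$ is locally thin, so every diagram of 2-cells in $W$ commutes; and because local thinness of the hom-categories is invariant under biequivalence, the same holds in $\mathcal{S}B$.

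The main obstacle will be the bookkeeping required to make $W$ into a genuine symmetric monoidal bicategory and to verify that the comparison functor respects the full symmetric monoidal structure rather than merely the underlying bicategories. In particular one must check that the structure 2-cells of $W$ coming from the symmetric groups satisfy the braiding, syllepsis, and symmetry axioms, and that the rectification 2-cells used for essential surjectivity are natural and coherent; these verifications are routine but lengthy, which is precisely why the result is recorded here without a complete proof. I do not expect any genuinely new difficulty beyond those already resolved in the object-only case, since all of the symmetric content has been isolated into the permutation factor and all of the bicategorical content into $B$, with naturality mediating between the two.
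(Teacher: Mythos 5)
The paper records this theorem without proof, saying only that it is ``best accomplished using a wreath product-type construction as in \cite{js}''; your proposal is exactly that construction, so you are following the paper's intended approach. Your outline is sound---freeness of $B$ gives local thinness of $B$ by coherence for bicategories, the wreath product $W$ is then locally thin, and thinness of hom-categories transfers along a biequivalence---and the verifications you defer (chiefly local fullness of the comparison functor $W \to \mathcal{S}B$, which is where the rewriting and naturality work of the paper's $\S$3 would reappear with labelled strands, as in the proof of Corollary \ref{coherenceset}) are precisely the ones the paper also declines to carry out.
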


\noindent \textbf{Definition.}  The symmetric monoidal category $\Sigma$ has objects the natural numbers $\mb{n}$ and hom-sets given by
\[
\Sigma(\mb{m}, \mb{n}) = \left\{ \begin{array}{lr}
\Sigma_{n}, & m=n \\
\varnothing, & m \neq n,
\end{array} \right.
\]
where $\Sigma_{n}$ is the symmetric group on $n$ letters.  The tensor product is given by addition on objects and the obvious inclusion $\Sigma_{m} \times \Sigma_{n} \hookrightarrow \Sigma_{m+n}$ on morphisms.  The symmetry $c_{n,m}$ is given by the permutation in $\Sigma_{n+m}$ that takes $\{1,\dots, n+m\}$ to $\{n+1,n+2, \dots, n+m, 1, 2, \dots, n\}$. Note that $\Sigma$ is equivalent as a symmetric monoidal category to the category of finite sets and isomorphisms, with monoidal product given by disjoint union.

Our second statement of coherence is the following corollary, which is an immediate consequence of Theorem \ref{coherencesmb}.

\begin{cor}
The symmetric monoidal functor $\pi:\mathcal{S} \rightarrow \Sigma$ induced by the universal property sending the generating object of $\mathcal{S}$ to $\mb{1}$ is a symmetric monoidal biequivalence.
\end{cor}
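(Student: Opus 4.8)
The plan is to check that the underlying functor of bicategories of $\pi$ meets the standard criterion for a biequivalence: it should be essentially surjective on objects and induce an equivalence of hom-categories $\pi_{x,y} \colon \mathcal{S}(x,y) \to \Sigma(\pi x, \pi y)$ for all objects $x, y$. Since $\pi$ is already a symmetric monoidal functor, produced by the universal property of $\mathcal{S}$, once its underlying functor of bicategories is shown to be a biequivalence the statement that $\pi$ is a \emph{symmetric monoidal} biequivalence is immediate from the definition given above.

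Essential surjectivity on objects is the easiest point. Every object of $\mathcal{S}$ is built from the generator and the unit $I$ by iterated tensoring, and $\pi$ sends the generator to $\mb{1}$, the unit to $\mb{0}$, and the tensor product to addition; hence an object containing $n$ copies of the generator is sent to $\mb{n}$. As every natural number $\mb{n}$ occurs this way, $\pi$ is surjective on objects.

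For the local functors, write $m$ and $n$ for the number of generating objects occurring in $x$ and $y$. Every 1-cell of $\mathcal{S}$ preserves this count, since the generating 1-cells are identities (as $B = *$ is discrete) while the associativity, unit, and braiding 1-cells only rebracket or permute the generators, and composition and tensor preserve the count; thus when $m \neq n$ both $\mathcal{S}(x,y)$ and $\Sigma(\mb{m},\mb{n})$ are empty and there is nothing to prove. When $m = n$ the target $\Sigma(\mb{m},\mb{m}) = \Sigma_m$ is a discrete category, so I must show $\pi_{x,y}$ is fully faithful and essentially surjective onto it. Fullness and faithfulness are exactly Theorem \ref{coherencesmb}: if $\pi f = \pi g$ then $f$ and $g$ share an underlying permutation, so they are isomorphic by a unique invertible 2-cell, matching the one-element hom-set of $\Sigma_m$; and if $\pi f \neq \pi g$ then $f \not\cong g$, whence by the dichotomy of the coherence theorem there are no 2-cells $f \Rightarrow g$ at all, matching the empty hom-set. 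Here one uses that every 2-cell of $\mathcal{S}$ is invertible, as $B = *$ contributes only identity 2-cells and all of the structural 2-cells $\pi, \mu, \lambda, \rho, R_{--|-}, R_{-|--}, v$ and their inverses are invertible.

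The one genuinely non-formal step is essential surjectivity of $\pi_{x,y}$ onto $\Sigma_m$, i.e.\ that every permutation is the underlying permutation of some 1-cell $x \to y$. I would first use associativity and unit 1-cells to construct a 1-cell $x \to y$ realizing the identity permutation (available because $x$ and $y$ have equal numbers of generators, differing only in bracketing), and then precompose with tensor products of braiding 1-cells $R$; adjacent braidings realize the generating transpositions, which generate $\Sigma_m$, so every permutation is attained. I do not expect any serious obstacle: the entire conceptual weight sits in Theorem \ref{coherencesmb}, and the remaining work is the bookkeeping of matching $\pi$, defined via the universal property, with the assignment of underlying permutations, which reduces to recording that $\pi$ sends each braiding to the symmetry of $\Sigma$ and every coherence 1-cell to an identity.
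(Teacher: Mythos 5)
Your proposal is correct and follows essentially the same route as the paper, which simply declares this corollary an immediate consequence of Theorem \ref{coherencesmb}: the coherence theorem supplies exactly the local full faithfulness (via the dichotomy and the permutation criterion for isomorphism), and the remaining points you spell out --- surjectivity on objects and realizing every permutation by composites of braidings --- are the routine verifications the paper leaves implicit. No gap; your unpacking is just a more explicit version of the paper's ``immediate consequence.''
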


\begin{rem}
Note that this theorem shows that, up to symmetric monoidal biequivalence, there is no difference between the free symmetric monoidal bicategory on one object and the free symmetric monoidal \textit{category} on one object (viewed as a discrete symmetric monoidal bicategory), as $\Sigma$ is a strict model for this category.
\end{rem}

The third coherence theorem for symmetric monoidal bicategories is a straightforward strictification theorem.

\begin{thm}\label{strictification}
Every symmetric monoidal bicategory is biequivalent, as a symmetric monoidal bicategory, to a strict one.
\end{thm}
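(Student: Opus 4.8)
The plan is to strictify in two stages: first handle the underlying braided structure by appealing to the braided strictification theorem of \cite{gurski-braid}, and then rigidify the syllepsis data, using Theorem \ref{coherencesmb} to discharge all of the resulting 2-cell equations at once.

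First I would invoke \cite{gurski-braid}: the underlying braided monoidal bicategory of a given symmetric monoidal bicategory $X$ is braided monoidally biequivalent to a strict braided monoidal 2-category $X'$ in the sense of Crans \cite{crans}, a reduction which itself uses the monoidal strictification of \cite{GPS}. I would then transport the whole sylleptic and symmetric structure of $X$ across this braided biequivalence by the standard transport-of-structure principle: using a pseudo-inverse and the braided constraint cells one obtains an invertible modification $v'$ on $X'$, and since the syllepsis and symmetry axioms are equalities of pasted 2-cells, they are preserved by an equivalence. Thus $(X', v')$ is again a symmetric monoidal bicategory, now with strict underlying braided structure, and the braided biequivalence upgrades to a symmetric monoidal one.

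It then remains to arrange the second clause in the definition of a strict symmetric monoidal bicategory, namely that the chosen braiding adjoint equivalence be $R_{xy} \dashv_{eq} R_{yx}$ with unit $v_{xy}^{-1}$ and counit $v_{yx}$. Concretely this means verifying that the quadruple $(R_{xy}, R_{yx}, v_{xy}^{-1}, v_{yx})$ satisfies the two triangle identities, and that after replacing the adjoint equivalence data of $X'$ by this choice the braided constraint 2-cells $R_{-,-|-}$, $R_{-|-,-}$ together with $v'$ still satisfy every braided, syllepsis, and symmetry axiom. Each of these is a single diagram of constraint 2-cells, so I would settle them uniformly by coherence: lift each diagram to the free symmetric monoidal bicategory on the finitely many generating objects appearing in it, where it commutes by Theorem \ref{coherencesmb} and Corollary \ref{coherenceset}, and push it back down along the canonical symmetric monoidal evaluation functor onto $X'$.

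The main obstacle is exactly this last step, and it has two parts. One must first ensure that the triangle identities and the reconstructed axioms genuinely are images of commuting diagrams in a free object, which requires that the data involved in $X'$ -- the 1-cells $R_{xy}$, $R_{yx}$ and the 2-cell $v'$ -- lie in the image of the evaluation functor from the free symmetric monoidal bicategory on the underlying objects, so that naturality makes coherence applicable. Second, one must check that replacing the pseudo-inverse $R_{xy}^{\dot}$ by $R_{yx}$ interacts correctly with the mates $\widehat{R}$ occurring in the braided and syllepsis axioms, since changing the adjunction changes the mate correspondence; this too, however, reduces to an equality of pasting diagrams of constraint cells and is therefore forced by Theorem \ref{coherencesmb}. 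Once all of these equations are in hand, $X'$ equipped with the redefined data is a strict symmetric monoidal bicategory and is symmetric monoidally biequivalent to $X$, establishing Theorem \ref{strictification}.
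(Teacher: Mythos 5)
Your two-stage outline --- first strictify the underlying braided structure via \cite{gurski-braid}, then force the adjoint equivalences to be $R_{xy} \dashv_{eq} R_{yx}$ with unit $v_{xy}^{-1}$ and counit $v_{yx}$ --- is the same skeleton as the paper's proof, and your second stage is a legitimate variant of it. Once the ambient bicategory is known to be a symmetric monoidal bicategory, every equation you need in stage 2 (the triangle identities for $(R_{xy},R_{yx},v_{xy}^{-1},v_{yx})$, and the axioms rewritten with the new mates) is a pasting of constraint cells of that \emph{existing} structure at specified objects; it is therefore the image of a diagram in the free symmetric monoidal bicategory on a finite set under the strict classifying functor supplied by the monadic adjunction, and Corollary \ref{coherenceset} makes it commute. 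The paper instead derives the triangle identities directly from the symmetry axiom (Lemma \ref{RRadj}) and notes that in McCrudden's formulation \cite{mccrudden} the axioms never mention $R^{\dot}$, which is why it calls strictification a complementary result rather than a consequence of coherence; your observation that stage 2 can be made a corollary of Theorem \ref{coherencesmb} is correct and genuinely different. What coherence does \emph{not} give you, even in stage 2, is the upgrade of the comparison (identity) functor to a symmetric monoidal biequivalence: that axiom involves the functor constraints $\chi$ and $U$, and the paper settles it with the icon calculations of Proposition \ref{positivebraidedstructure}, which you assert rather than perform.

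The genuine gap is in stage 1. You transport the syllepsis across an abstract braided biequivalence $X \simeq X'$ by appeal to a ``standard transport-of-structure principle,'' asserting that the syllepsis and symmetry axioms ``are preserved by an equivalence.'' That is not an argument, and no coherence theorem available in this paper can supply it: at that moment $X'$ is only known to be \emph{braided}, so the candidate cell $v'$ is not a constraint cell of any symmetric monoidal structure, braided coherence says nothing about diagrams containing it, and Theorem \ref{coherencesmb} cannot be invoked for $X'$ without circularity (the classifying strict symmetric monoidal functor into $X'$ exists only after $(X',v')$ is known to be symmetric). The only route is to define $v'$ through the local equivalences of hom-categories and the constraint data of the braided pseudofunctors and verify each axiom by a direct pasting computation through the biequivalence --- precisely the long verification your one-sentence appeal skips. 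The paper avoids this entirely by using the concrete strictification $\mathrm{Gr}X$ of \cite{gurski-braid}, whose cells between strings are by construction cells of $X$ between their evaluations: there the syllepsis transports by pure re-indexing, $v_{\mathbf{x},\mathbf{y}} := v_{e(\mathbf{x}),e(\mathbf{y})}$, and the sylleptic and symmetric axioms hold verbatim because they are literally equations among 2-cells of $X$. To repair your proof you should either replace your abstract $X'$ by $\mathrm{Gr}X$, or supply the transport argument (and the symmetric-monoidal-functor upgrades) that you are presently assuming.
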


We should note that this version of coherence is not so much a consequence of the previous versions as it is a complementary result.  The proof of this theorem follows more from the strategy we employ to prove the other versions of coherence rather than from those coherence theorems directly.

\section{Topological applications}

As is the case with categories, there is a classifying space construction for bicategories that allows one to transport extra structure at the level of bicategories to extra structure at the level of spaces. There is also a fundamental 2-groupoid functor from spaces to bigroupoids, constructed in \cite{gurski-braid} or \cite{MS}. It carries all the information about the homotopy 2-type of the space.  In this section we explore how the symmetric monoidal structure interacts with these two functors. It is no surprise that the coherence theorem (Theorem \ref{coherencesmb}) implies that the classifying space of a symmetric monoidal bicategory is an $E_{\infty}$ space, which we prove in $\S$\ref{gamma}. In $\S$\ref{pi}, we prove that the fundamental 2-groupoid of an $E_n$ space has the structure of a symmetric monoidal bicategory, if $n\geq 4$, and sylleptic monoidal bicategory if $n=3$.

\subsection{Symmetric monoidal bicategories and $\Gamma$-spaces}\label{gamma}

In this section we prove that the classifying space of a symmetric monoidal category is an $E_{\infty}$ space. In order to do this, we use Segal's theory of $\Gamma$-spaces \cite{segal}. This improves on the results of the second author in \cite{osorno}, where a similar result is proved for a collection of symmetric monoidal bicategories satisfying some extra hypotheses, which were sufficient for the applications in mind\footnote{In \cite{osorno}, the second author chose the term \emph{strict} to refer to the symmetric monoidal bicategories in that collection. We would like to point out that the strict symmetric monoidal bicategories in that sense are not necessarily strict as defined in this paper, nor conversely. Thus the strictification theorem (Theorem \ref{strictification}) does not produce symmetric monoidal bicategories that satisfy the axioms in \cite{osorno}.}.

We  denote by $\cF$ the skeletal category of finite pointed sets and pointed maps, with objects given by $\ul{n}=\{ 0, \dots, n\}$, where 0 is the basepoint. This category is isomorphic to the opposite of the category $\Gamma$ defined by Segal.

A $\Gamma$-\emph{space} is a functor $X\colon \cF \to \mb{Top} _{\ast}$, from $\cF$ to pointed spaces. It is said to be \emph{special} if the structural map
\[
p_n\colon X(\ul{n}) \to X(\ul{1})^n
\]
is a weak equivalence for all $n\geq0$. We then say that $X(\ul{1})$ is an $E_{\infty}$ space.

\begin{thm}\cite[Prop. 1.4]{segal}\label{gspaces}
Let $X$ be a special $\Gamma$-space. Then $X(\ul{1})$ is an infinite loop space upon group completion.
\end{thm}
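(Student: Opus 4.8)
The plan is to follow Segal's construction of a connective spectrum out of $X$ and to identify $X(\ul{1})$, after group completion, as its zeroth space. First I would prolong the functor $X\colon \cF \to \mb{Top}_{\ast}$ to a functor on pointed simplicial sets by left Kan extension, so that it preserves the relevant filtered colimits and can be evaluated on the simplicial spheres $S^{n}$. Using the standard minimal model of the circle, in which the set of $n$-simplices of $S^{1}$ is precisely $\ul{n}$, the value $X(S^{1})$ is the geometric realization of the simplicial space $[n] \mapsto X(\ul{n})$, and more generally $X(S^{n})$ is obtained by realizing an $n$-fold multisimplicial space built the same way. Note that $X(S^{0}) = X(\ul{1})$, which is why this space is the one we expect to deloop.

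Next I would assemble the sequence $\{X(S^{n})\}_{n \geq 0}$ into a spectrum. The structure maps arise from the natural assembly maps $S^{1} \sma X(S^{n}) \to X(S^{1} \sma S^{n}) = X(S^{n+1})$ that come with the prolongation; passing to adjoints gives maps $\sigma_{n}\colon X(S^{n}) \to \Omega X(S^{n+1})$. The crux of the argument is to show that $\sigma_{n}$ is a weak equivalence for $n \geq 1$. Here the special hypothesis enters: the maps $p_{m}\colon X(\ul{m}) \to X(\ul{1})^{m}$ being weak equivalences say exactly that $[m] \mapsto X(\ul{m})$ is a special (Segal) simplicial space with $X(\ul{0}) \simeq \ast$. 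For such a simplicial space, provided its first space is group-like, the canonical map from the first space to the loop space of the realization is a weak equivalence; applying this to the multisimplicial spheres and using that each $X(S^{n})$ for $n \geq 1$ is already connected, hence group-like, yields the equivalences $\sigma_{n}$ for $n \geq 1$. Thus $\{X(S^{n})\}$ is an $\Omega$-spectrum above level zero.

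Finally I would analyze level zero. The zeroth structure map is $X(\ul{1}) = X(S^{0}) \to \Omega X(S^{1})$, and since $X(\ul{1})$ need not be group-like---its set of components is only a commutative monoid---this map is not in general an equivalence but is instead the group completion of the $H$-space $X(\ul{1})$, whose multiplication is induced, via a homotopy inverse to $p_{2}$, by the fold map $\ul{2} \to \ul{1}$. Consequently the connective spectrum with spaces $X(S^{n})$, together with this group completion, exhibits $X(\ul{1})$ as an infinite loop space after group completion, which is the assertion.

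The main obstacle is the equivalence $\sigma_{n}$ for $n \geq 1$, that is, the delooping step. Showing that a special simplicial space with group-like first space deloops upon realization is the technical heart of the theory; it requires a geometric realization argument controlling homotopy groups (for instance via the Bousfield--Friedlander theorem on the $\pi_{\ast}$-Kan condition, or a spectral-sequence computation of the group-completion type), and one must take care that realization interacts well with the loop functor and with the special maps $p_{m}$ in the required range.
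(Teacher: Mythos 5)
This theorem is quoted from Segal \cite{segal} and the paper offers no proof of its own, so the only available comparison is with Segal's original argument---which is exactly what you have reconstructed: prolongation of the $\Gamma$-space to (multi)simplicial spheres, the delooping equivalences $X(S^{n}) \simeq \Omega X(S^{n+1})$ for $n \geq 1$ obtained from the special (Segal) condition together with connectivity of $X(S^{n})$, and the identification of $X(\ul{1}) \to \Omega X(S^{1})$ as a group completion. Your proposal is correct in outline and takes essentially the same route as the cited source.
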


The following definition is analogous to that for spaces. It was first given in \cite{osorno}.

\noindent \textbf{Definition.}  A \emph{$\Gamma$-bicategory} $\A$ is a functor (of categories) $\A\colon \cF\rightarrow \textrm{Bicat}_{\ast}$ from $\cF$ to the category of pointed bicategories and pointed pseudofunctors between them. We say $\A$ is \emph{special} if the map
$$p_n\colon \A(\ul{n})\rightarrow \A(\ul{1})^{\times n}$$
is a biequivalence of bicategories for all $n\geq 0$.

\begin{lem}\cite{osorno}\label{lemma}
 Let $\A$ be a special $\Gamma$-bicategory. Then $|\bN \A|\colon\cF \rightarrow \mb{Top}_{\ast}$ is a special $\Gamma$-space.
\end{lem}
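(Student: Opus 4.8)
The plan is to exhibit the structure map of the candidate $\Gamma$-space as a composite of two maps, each of which is a weak equivalence for a separate and essentially standard reason. First I would record that $|\bN(-)|$ is a functor from pointed bicategories to pointed spaces, so that the composite $|\bN\A|$ is automatically a functor $\cF \to \mb{Top}_{\ast}$; the basepoints are supplied by the pointed structure of each $\A(\ul{n})$, and functoriality in $\cF$ is inherited from $\A$. Hence the only content of the lemma is the speciality condition: for each $n \geq 0$ the structure map
\[
p_n \colon |\bN\A(\ul{n})| \to |\bN\A(\ul{1})|^{n}
\]
must be a weak homotopy equivalence.

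The key observation is that this $p_n$ factors. Writing $p_n^{\A}\colon \A(\ul{n}) \to \A(\ul{1})^{\times n}$ for the structure map of the $\Gamma$-bicategory (assembled from the $n$ morphisms $\ul{n} \to \ul{1}$ in $\cF$), the $\Gamma$-space map $p_n$ is the composite of $|\bN(p_n^{\A})|$ with the canonical comparison map
\[
\kappa_n \colon |\bN(\A(\ul{1})^{\times n})| \to |\bN\A(\ul{1})|^{n}
\]
induced by the $n$ projections. I would then treat the two factors separately. Since $\A$ is special, $p_n^{\A}$ is a biequivalence, so I would invoke the fact that the classifying space functor $|\bN(-)|$ carries biequivalences of bicategories to weak homotopy equivalences; this makes $|\bN(p_n^{\A})|$ a weak equivalence. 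For $\kappa_n$ I would use that the nerve $\bN$ preserves finite products (the nerve of a product of bicategories is naturally isomorphic to the product of the nerves) and that geometric realization preserves finite products when one works in a convenient category of compactly generated spaces (Milnor's theorem); together these show $\kappa_n$ is a homeomorphism, in particular a weak equivalence. Composing, $p_n$ is a weak equivalence for all $n$, with the case $n=0$ recording that $|\bN\A(\ul{0})|$ is contractible since the empty product is the terminal bicategory, whose nerve is a point. This is exactly speciality.

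The main obstacle is not the formal factorization but the two homotopical inputs about $|\bN(-)|$. The product-preservation step relies genuinely on passing to compactly generated spaces so that realization commutes with finite products; without that one only has a natural map and must argue it is a weak equivalence. More substantively, the passage from biequivalence to weak homotopy equivalence is the real two-dimensional content: one must know that the chosen nerve construction $\bN$ is homotopically well behaved, i.e.\ that a pseudofunctor which is essentially surjective and a local biequivalence on hom-categories induces a weak equivalence after nerve and realization. I would establish this by reducing to the corresponding $1$-categorical fact (that an equivalence of categories induces a homotopy equivalence of classifying spaces) applied levelwise to the simplicial object $\bN$, or else simply cite the homotopical properties of the nerve proved in \cite{osorno}; this is the step where care is required and where the argument is most sensitive to the precise definition of $\bN$.
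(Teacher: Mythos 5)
The paper gives no proof of this lemma at all --- it is imported verbatim from \cite{osorno} --- and your argument is essentially the proof found in that cited source: factor $p_n$ as $|\bN(p_n^{\A})|$ followed by the product comparison $\kappa_n$, handle $\kappa_n$ by product preservation of the nerve and of realization (in compactly generated spaces), and handle the first factor by the fact that the classifying space functor carries biequivalences of bicategories to weak homotopy equivalences; the paper's own later remark that its corollary depends ``only [on the fact] that this functor preserves products up to weak equivalence'' confirms these are exactly the intended inputs. One caution on your final paragraph: the biequivalence-to-weak-equivalence statement is not proved in \cite{osorno} either, but is cited there from the work of Carrasco, Cegarra and Garz\'on \cite{ccg-lax}, and your proposed fallback --- reducing to the $1$-categorical fact applied levelwise to the simplicial object $\bN$ --- does not work as stated, since a biequivalence need not induce levelwise equivalences (for the geometric nerve the levels are mere sets, and for category-valued nerves a biequivalence is not levelwise an equivalence), so that step should rest on the citation rather than on this reduction.
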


Here, $|\bN X|$ denotes the geometric realization of the nerve of the bicategory $X$; we also refer to this space as the classifying space of $X$.

Instead of constructing directly a special $\Gamma$-bicategory, we will construct a weakened version and then use a rectification theorem of \cite{ccg-lax}.


\noindent \textbf{Definition.} A \emph{pseudo-diagram of bicategories indexed by} $\cF$ is a pseudofunctor from $\cF$ into the tricategory $\mb{Bicat}$ of bicategories, pseudonatural transformations, and modifications.

The data for such a functor is written out explicitly in \cite[\S 2.2]{ccg-lax}. In what follows, we will use their notation.

\begin{thm}
Let $\cC$ be a symmetric monoidal bicategory. There exists a pseudo-diagram of bicategories $\gc$ with level $n$ given by $\cC ^n$. 
\end{thm}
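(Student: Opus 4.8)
The plan is to construct a pseudofunctor $\gc : \cF \to \mb{Bicat}$ whose value on the object $\ul{n}$ is the bicategory $\cC^n$, using the symmetric monoidal structure on $\cC$ to produce the structural pseudonatural transformations. First I would specify the data: on objects, set $\gc(\ul{n}) = \cC^n$ (with $\gc(\ul{0})$ the terminal bicategory, a point). On a pointed map $\phi : \ul{m} \to \ul{n}$, I would define a pseudofunctor $\phi_* : \cC^m \to \cC^n$ whose $j$-th component (for $1 \leq j \leq n$) takes a tuple $(x_1,\dots,x_m)$ to the tensor product $\bigotimes_{i \in \phi^{-1}(j)} x_i$, where the product is taken in increasing order of $i$ and the empty tensor product is the unit $I$. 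This is the standard Segal-style construction, and on 1-cells and 2-cells one tensors componentwise. The key point is that composition of such maps is only preserved up to coherent equivalence, which is exactly why we land in the tricategory $\mb{Bicat}$ rather than in a $1$-category of bicategories.

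\textbf{Supplying the higher data.} For the pseudofunctor structure I would then construct, for each composable pair $\ul{m} \xrightarrow{\phi} \ul{n} \xrightarrow{\psi} \ul{p}$, a pseudonatural equivalence $\psi_* \circ \phi_* \simeq (\psi\phi)_*$. Comparing these two pseudofunctors amounts to comparing two ways of associating and reordering an iterated tensor product indexed by the fibers of $\psi\phi$: the composite first groups by the fibers of $\phi$ and then regroups by the fibers of $\psi$, while $(\psi\phi)_*$ groups directly. The components of this equivalence are therefore built entirely from the associativity adjoint equivalences $\mathbf{a}$, the unit equivalences $\mathbf{l}, \mathbf{r}$, and the braidings $R$ of $\cC$ — one needs braidings precisely because a nontrivial $\psi$ can permute the relative order of blocks. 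I would likewise supply the unit comparison for $\mathrm{id} : \ul{n} \to \ul{n}$. The invertible modifications witnessing associativity and unit coherence of the pseudofunctor (the data $\omega$, $\gamma$, $\delta$ in the notation of \cite{ccg-lax}) are then obtained as pasting composites of the constraint 2-cells $\pi, \mu, \lambda, \rho$ and the braiding 2-cells $R_{-|-,-}, R_{-,-|-}, v$.

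\textbf{Verifying the coherence axioms.} The main obstacle is verifying that all of these assembled 2-cells satisfy the axioms required of a pseudofunctor into a tricategory (e.g. the two pentagon-type and unit conditions relating $\omega$, $\gamma$, $\delta$). These are large diagrams of constraint 2-cells in $\cC$, and checking them by hand would be prohibitive. Here is where I would invoke the coherence theorem, Theorem \ref{coherencesmb}: every 2-cell appearing in these axioms is a parallel composite of constraint cells, hence is the image under the canonical symmetric monoidal functor $\mathcal{S} \to \cC$ of a diagram of 2-cells in the free symmetric monoidal bicategory $\mathcal{S}$ on one object (or on a finite set of objects, using Corollary \ref{coherenceset} and naturality). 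Since every diagram of 2-cells in $\mathcal{S}$ commutes, each coherence axiom holds automatically in $\cC$. Thus the entire verification reduces to checking that the two sides of every required axiom are genuinely parallel — that their source and target 1-cells agree — which is a purely combinatorial bookkeeping statement about how the fibers of composable maps in $\cF$ organize iterated tensor products, and which holds by construction. The substance of the theorem is therefore the setup of the data together with this single appeal to coherence, rather than any direct 2-categorical computation.
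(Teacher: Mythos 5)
Your proposal is correct and follows essentially the same route as the paper: define $\theta_*$ by tensoring over the fibers of a map in $\cF$, build the comparison transformations $\chi$ from the constraint equivalences $\mathbf{a}$, $\mathbf{l}$, $\mathbf{r}$, $R$, and dispose of the modifications and their coherence axioms by appeal to Theorem \ref{coherencesmb} (via Corollary \ref{coherenceset} and naturality). The only cosmetic difference is that the paper defines $\omega$, $\delta$, $\gamma$ directly as the unique invertible 2-cells supplied by coherence between the parallel legs (which have the same underlying permutation), rather than as explicit pastings of constraint 2-cells whose axioms are then checked by coherence, but the essential content is identical.
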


\begin{proof}
We construct $\gc$ as follows. Let $\theta \colon \ul{n} \to \ul{m}$ be a morphism in $\cF$. The  homomorphism $\theta _{\ast} \colon \cC ^n \to \cC ^m$ sends the $n$-tuple $(a_1, \dots, a_n)$ to the $m$-tuple $(b_1, \dots, b_m)$, where
\[b_i=\bigoplus_{j\in \theta^{-1}(i)}a_j\]
with the convention that the sum is taken in order, a parenthesization has been chosen --for definiteness, say from left to right--, and the empty sum returns the identity object of the symmetric monoidal bicategory. The same is done with 1-cells and 2-cells. Note that this is an iteration of the monoidal product in $\cC$, hence it is a homomorphism of bicategories.

Given composable morphisms $\ul{n} \fto{\theta} \ul{m}$, $\ul{m}\fto{\tau}\ul{p}$ in $\cF$, note that $(\tau_{\ast} \theta_{\ast})(a_1, \dots, a_n)$ and $(\tau \theta)_{\ast} (a_1, \dots, a_n)$ are sums of exactly the same terms of the $n$-tuple, but most likely added in different order and with different parenthesization. Thus there exists a pseudonatural transformation $\chi _{\theta, \tau}\colon \tau_{\ast} \theta_{\ast} \to (\tau \theta)_{\ast} $ constructed as a composite of instances of the pseudonatural transformations $\mathbf{a}$, $\mathbf{l}$, $\mathbf{r}$ and $R$ which are part of the structure of the symmetric monoidal bicategory. We remark that this choice is not necessarily unique, but one such exists, and we choose one for every pair $(\tau, \sigma)$.

On the other hand, note that $(1_n)_{\ast}$ is the identity homomorphism on $\cC^n$, thus we can take the pseudonatural transformation $\iota _{n}$ to be the identity.

The remaining data needed are the modifications $\omega$, $\delta$ and $\gamma$:

\[
\def\labelstyle{\scriptstyle}
\xy
{\ar^{{\phi _{\ast} \chi}} (0,0)*+{\phi_{\ast} \tau _{\ast} \theta _{\ast}}; (20,0)*+{\phi_{\ast}( \tau  \theta) _{\ast}} };
{\ar_{\chi \theta_{\ast}} (0,0)*+{\phi_{\ast} \tau _{\ast} \theta _{\ast}}; (0,-20)*+{ (\phi \tau )_{\ast}\theta _{\ast}} };
{\ar_{\chi} (0,-20)*+{ (\phi \tau )_{\ast}\theta _{\ast}}; (20,-20)*+{(\phi \tau  \theta) _{\ast}} };
{\ar^{\chi} (20,0)*+{\phi_{\ast}( \tau  \theta) _{\ast}}; (20,-20)*+{(\phi \tau  \theta) _{\ast}} };
(10,-10)*{\Rightarrow _{\omega}};
{\ar^{{\theta _{\ast} \iota}} (40,0)*+{\theta _{\ast}}; (70,0)*+{\theta _{\ast} (1_n)_{\ast}} };
{\ar_{\iota \theta_{\ast}} (40,0)*+{\theta _{\ast}}; (40,-20)*+{ (1_m)_{\ast}\theta _{\ast}} };
{\ar_{\chi} (40,-20)*+{ (1_m)_{\ast}\theta _{\ast}}; (70,-20)*+{\theta _{\ast}} };
{\ar^{\chi} (70,0)*+{\theta _{\ast} (1_n)_{\ast}}; (70,-20)*+{\theta _{\ast}} };
{\ar^{1} (40,0)*+{\theta _{\ast} }; (70,-20)*+{\theta _{\ast}} };
(10,-10)*{\Rightarrow _{\omega}}; (62,-7)*{\Rightarrow _{\delta}}; (50,-13)*{\Rightarrow _{\gamma}};

\endxy
\]

Note that the two legs of the diagram on the left are composites of 1-cells coming from the symmetric monoidal structure on $\cC$, with the same underlying permutation. Thus, by Theorem \ref{coherencesmb}, there exists a unique 2-isomorphism between them. We let $\omega$ be this unique isomorphism. The commutativity of the diagram \cite[(CC1)]{ccg-lax} follows from this uniqueness. A similar argument works for the construction of the modifications $\delta$ and $\gamma$ above; we define $\delta$ and $\gamma$ as the unique 2-isomorphisms for the symmetric monoidal structure that fill the diagram on the right. These 2-isomorphims exist because of Theorem \ref{coherencesmb}. Diagram \cite[(CC2)]{ccg-lax} commutes because of the uniqueness part of the theorem.
\end{proof}

\begin{rem}
Note that Theorem \ref{coherencesmb} is used in the above proof in an essential way; the coherence theorem for symmetric monoidal bicategories implies the coherence of the data for the pseudo-diagram of bicategories.
\end{rem}

The rectification constructed in \cite[Prop 4.1]{ccg-lax} provides a strict diagram $\gc ^r$. Note that the map $\ast=\cC^0\fto{J_0} \gc^r(0)\to \gc^r(n)$ provides the bicategory $\gc^r(n)$ with a basepoint. Furthermore, the strict functoriality of $\gc^r$ implies that for all $\theta\colon \ul{n}\to\ul{m}$ in $\cF$, the pseudofunctor $\gc^r(\theta)$ is pointed. We thus get that $\gc^r$ is a $\Gamma$-bicategory. As a consequence of \cite[Prop 4.2]{ccg-lax}, the map $J_n:\cC^n=\gc(n) \to \gc^r(n)$ is an equivalence of bicategories, thus showing that $\gc^ r$ is a special $\Gamma$-bicategory, with $\gc ^r(1)$ equivalent to $\cC$.

\begin{cor}
The classifying space $|\bN \gc|$ is a special $\Gamma$-space. The space $|\bN \cC|$ is an $E_{\infty}$ space.
\end{cor}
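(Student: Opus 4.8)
The plan is to assemble the results established above; the corollary is a formal consequence requiring no new computation. The preceding paragraph already shows that the rectification $\gc^r$ is a special $\Gamma$-bicategory with $\gc^r(1)$ biequivalent to $\cC$. First I would invoke Lemma \ref{lemma}: applied to the special $\Gamma$-bicategory $\gc^r$, it yields at once that $|\bN \gc^r| \colon \cF \to \mb{Top}_{\ast}$ is a special $\Gamma$-space, which is the first assertion.

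For the second assertion I would unwind the definition of specialness. By definition, a special $\Gamma$-space $X$ has the property that $X(\ul{1})$ is an $E_{\infty}$ space. Applying this to $X = |\bN \gc^r|$ and using the identification $|\bN \gc^r|(\ul{1}) = |\bN \gc^r(1)|$, we conclude that $|\bN \gc^r(1)|$ is an $E_{\infty}$ space. It then remains only to transport this conclusion across the biequivalence $\gc^r(1) \simeq \cC$ recorded above.

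The one point needing care — and the main, though modest, obstacle — is that the classifying space functor $|\bN -|$ sends a biequivalence of bicategories to a weak homotopy equivalence of spaces. This is precisely the fact underlying the proof of Lemma \ref{lemma} (where it is applied to the structure maps $p_n$), so it is available to us here. Granting it, the biequivalence $\gc^r(1) \simeq \cC$ induces a weak equivalence $|\bN \gc^r(1)| \simeq |\bN \cC|$, and since being an $E_{\infty}$ space is a homotopy-invariant notion, $|\bN \cC|$ inherits the $E_{\infty}$ structure. Combining this with Theorem \ref{gspaces} shows, as a bonus, that $|\bN \cC|$ group-completes to an infinite loop space.
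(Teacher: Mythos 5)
Your proposal is correct and follows exactly the route the paper intends: the corollary is stated without separate proof precisely because it is immediate from the preceding paragraph (which establishes that $\gc^r$ is a special $\Gamma$-bicategory with $\gc^r(1)$ biequivalent to $\cC$), Lemma \ref{lemma}, and the definition of an $E_{\infty}$ space as level $1$ of a special $\Gamma$-space. Your one point of care — transporting the structure along the weak equivalence $|\bN \gc^r(1)| \simeq |\bN \cC|$ induced by the biequivalence — is exactly what the paper's subsequent remark addresses when it notes that the conclusion only requires the classifying space functor to preserve products up to weak equivalence.
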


\begin{rem}
Note that this corollary does not depend on a specific choice of classifying space functor from bicategories to spaces, only that this functor preserves products up to weak equivalence.
\end{rem}

\subsection{Fundamental 2-groupoids of $E_{n}$ spaces}\label{pi}

In this section we prove that the fundamental 2-groupoid of a $E_n$ space for $n=3$, and $n\geq 4$ can be equipped with the structure of a sylleptic, respectively symmetric, monoidal bicategory. The strategy will be similar to the one presented by the first author in \cite{gurski-braid}, where he proves that $E_1$ structures give rise to monoidal structures, and $E_2$ algebras give rise to braided monoidal structures.

Let $X$ be a space. The \emph{fundamental 2-groupoid} of $X$ is the bigroupoid $\Pi_2X$ with objects given by the points of $X$, 1-cells given by paths in $X$ and 2-cells given by homotopy classes of homotopies. The details of the construction can be found in \cite[\S 3.2]{gurski-braid}.

Recall that an operad $\sP$ in a symmetric monoidal category $(\cM, \otimes)$ is a device used to describe some notion of algebraic structure on the objects of $\cM$. The operad $\sP$ is given by a collection of objects $\sP(n)$ in $\cM$ for all $n\geq 0$, and structure maps
\[
\sP(n)\otimes \sP(k_1) \otimes \cdots \otimes \sP(k_n)\to \sP(k_1+\cdots + k_n),
\]
a unit map $I\to \sP(1)$, and right $\Sigma _n$-action on $\sP(n)$, making the maps above appropriately equivariant. These maps are required to satisfy unit and associativity axioms.  We are only interested in the case where $\cM=\mb{Top}$ and $\sP$ is the little $n$-cubes operad $\sC_n$, where $\sC_n$ is either 3 or 4.

The little $n$-cubes operad was introduced by Bordman and Vogt (as a Prop) \cite{BV} and May \cite{maygeom} to encode $E_n$ structures on spaces.  Let $J$ denote the open unit interval.  A \textit{little n-cube} is a linear embedding $\alpha:J^{n} \rightarrow J^{n}$ which is of the form $\alpha = \alpha_{1} \times \alpha_{2} \times \cdots \times \alpha_{n}$ where each $\alpha_{i}$ is a linear map
\[
\alpha_{i}(t) = (y_{i} - x_{i})t + x_{i}, \quad 0 \leq x_{i} < y_{i} \leq 1.
\]
The space $\mathcal{C}_{n}(k)$ is the subspace of $\textrm{Map} \big( (J^{n})^{k}, J^{n} \big)$ consisting of those $k$-tuples of little $n$-cubes which are pairwise disjoint.  There is an operadic multiplication on the spaces of little $n$-cubes given by composition of maps; geometrically, this corresponds to embedding cubes into other cubes.  It is then simple to check that this is a operad in the category of spaces. Here is an example of a point in $\sC_2(2)\times \sC_2(3) \times \sC_2(1)$:
 \[
\begin{xy}
(0,0);(20,0) **\dir{-};
(0,0);(0,20) **\dir{-};
(20,0);(20,20) **\dir{-};
(0,20);(20,20) **\dir{-};
(4,4);(14,4) **\dir{-};
(4,4);(4,11) **\dir{-};
(14,4);(14,11) **\dir{-};
(4,11);(14,11) **\dir{-};
(11,13);(16,13) **\dir{-};
(11,13);(11,18) **\dir{-};
(16,13);(16,18) **\dir{-};
(11,18);(16,18) **\dir{-};
(13.5,15.5)*{2};
(9,7.5)*{1};
\end{xy}
\quad  \quad
\begin{xy}
(0,0);(20,0) **\dir{-};
(0,0);(0,20) **\dir{-};
(20,0);(20,20) **\dir{-};
(0,20);(20,20) **\dir{-};
(3,4);(8,4) **\dir{-};
(3,4);(3,10) **\dir{-};
(8,4);(8,10) **\dir{-};
(3,10);(8,10) **\dir{-};
(5,13);(5,19) **\dir{-};
(5,13);(17,13) **\dir{-};
(5,19);(17,19) **\dir{-};
(17,13);(17,19) **\dir{-};
(11,2);(16,2) **\dir{-};
(11,2);(11,11) **\dir{-};
(16,2);(16,11) **\dir{-};
(11,11);(16,11) **\dir{-};
(13.5,6.5)*{1};
(11,16)*{2};
(5.5,7)*{3};
\end{xy}
\quad  \quad
\begin{xy}
(0,0);(20,0) **\dir{-};
(0,0);(0,20) **\dir{-};
(20,0);(20,20) **\dir{-};
(0,20);(20,20) **\dir{-};
(3,4);(16,4) **\dir{-};
(3,4);(3,13) **\dir{-};
(16,4);(16,13) **\dir{-};
(3,13);(16,13) **\dir{-};
(9.5,8.5)*{1};
\end{xy}
\]
The image of this point under the multiplication map is the following point in $\sC_2(4)$.
\[
\begin{xy}
(0,0);(30,0) **\dir{-};
(0,0);(0,30) **\dir{-};
(30,0);(30,30) **\dir{-};
(0,30);(30,30) **\dir{-};
(6,6);(21,6) **\dir{.};
(6,6);(6,16.5) **\dir{.};
(21,6);(21,16.5) **\dir{.};
(6,16.5);(21,16.5) **\dir{.};
(16.5,19.5);(24,19.5) **\dir{.};
(16.5,19.5);(16.5,27) **\dir{.};
(24,19.5);(24,27) **\dir{.};
(16.5,27);(24,27) **\dir{.};
(8.25,8.1);(12,8.1) **\dir{-};
(8.25,8.1);(8.25,11.25) **\dir{-};
(12,8.1);(12,11.25) **\dir{-};
(8.25,11.25);(12,11.25) **\dir{-};
(9.75,12.825);(9.75,15.975) **\dir{-};
(9.75,12.825);(18.75,12.825) **\dir{-};
(9.75,15.975);(18.75,15.975) **\dir{-};
(18.75,12.825);(18.75,15.975) **\dir{-};
(14.25,7.05);(18,7.05) **\dir{-};
(14.25,7.05);(14.25,11.775) **\dir{-};
(18,7.05);(18,11.775) **\dir{-};
(14.25,11.775);(18,11.775) **\dir{-};
(17.625,21);(22.5,21) **\dir{-};
(17.625,21);(17.625,24.375) **\dir{-};
(22.5,21);(22.5,24.375) **\dir{-};
(17.625,24.375);(22.5,24.375) **\dir{-};
(10.125,9.675)*{3};
(14.25,14.4)*{2};
(16.125,9.4125)*{1};
(20.0625,22.6875)*{4};
\end{xy}
\]

\begin{lem}\cite[Lemma 13]{gurski-braid}\label{operad}
Let $\mathscr{P}$ be an operad in $Top$ and $X$ a $\mathscr{P}$-algebra. Then every point $p\in \mathscr{P}(n)$ induces a map $\mu _p\colon X^n \to X$, every path $\gamma\colon I \to \mathscr{P}(n)$ gives a homotopy
\[\widetilde{\gamma}\colon \mu _{\gamma(0)} \Rightarrow \mu_{\gamma(1)},\]
and every map $\Phi \colon I^2 \to \mathscr{P}(n)$ induces a homotopy between homotopies
\[\widetilde{\Phi}\colon \widetilde{\Phi(-,0)}\to \widetilde{\Phi(-,1)}.\]
\end{lem}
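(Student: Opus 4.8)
The plan is to read off all three pieces of structure directly from the action maps of the operad algebra, with no recourse to the operad axioms themselves. Recall that a $\sP$-algebra structure on $X$ is the same as a family of continuous maps
\[
\theta_n \colon \sP(n) \times X^n \to X
\]
(one for each $n \geq 0$) satisfying unit, associativity, and equivariance axioms. For this lemma none of those axioms are needed: only the existence and continuity of each $\theta_n$ matters, since we are merely producing the induced maps and homotopies, not yet checking that they assemble into a coherent structure. The cleanest way to package the argument is to work in a convenient category of spaces (compactly generated weak Hausdorff spaces) and transpose $\theta_n$ across the exponential adjunction to a single continuous map
\[
\widehat{\theta}_n \colon \sP(n) \to \mathrm{Map}(X^n, X),
\]
where the mapping space carries the compact-open topology. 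Points, paths, and squares in $\sP(n)$ are then sent by $\widehat{\theta}_n$ to points, paths, and squares in $\mathrm{Map}(X^n,X)$, and a path (resp.\ a square) in a mapping space is by definition a homotopy (resp.\ a homotopy between homotopies) of maps $X^n \to X$. Reading off these images gives exactly the three assertions.

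Concretely, for a point $p \in \sP(n)$ I would set $\mu_p = \widehat{\theta}_n(p)$, so that $\mu_p(x_1, \dots, x_n) = \theta_n(p, x_1, \dots, x_n)$; this is continuous because $\theta_n$ is. For a path $\gamma \colon I \to \sP(n)$ I would define
\[
\widetilde{\gamma}(x_1, \dots, x_n, t) = \theta_n\big(\gamma(t), x_1, \dots, x_n\big),
\]
a map $X^n \times I \to X$ obtained as the composite of $\theta_n$ with $\gamma \times \mathrm{id}_{X^n}$ after the evident swap of factors. Its restrictions to $t = 0$ and $t = 1$ are $\mu_{\gamma(0)}$ and $\mu_{\gamma(1)}$ respectively, so $\widetilde{\gamma}$ is the desired homotopy $\mu_{\gamma(0)} \Rightarrow \mu_{\gamma(1)}$; equivalently it is the image under $\widehat{\theta}_n$ of the path $\gamma$.

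For the final assertion, given $\Phi \colon I^2 \to \sP(n)$ I would define
\[
\widetilde{\Phi}(x_1, \dots, x_n, s, t) = \theta_n\big(\Phi(s,t), x_1, \dots, x_n\big),
\]
a map $X^n \times I \times I \to X$. By construction its restriction along $t = 0$ is precisely $\widetilde{\Phi(-,0)}$ and along $t = 1$ is $\widetilde{\Phi(-,1)}$, applying the previous paragraph to the paths $s \mapsto \Phi(s,0)$ and $s \mapsto \Phi(s,1)$; hence $\widetilde{\Phi}$ is a homotopy between these two homotopies, namely the image of the square $\Phi$ under $\widehat{\theta}_n$. The only genuinely technical point in the whole argument---and thus the main, if minor, obstacle---is continuity: one must check that $\theta_n$ stays continuous after the currying and factor-swapping above, and that the exponential law $\mathrm{Map}(A \times B, C) \cong \mathrm{Map}(A, \mathrm{Map}(B, C))$ is available, which is exactly why one works throughout in compactly generated spaces. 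With that convention fixed, each of the three displayed maps is manifestly continuous and the lemma follows.
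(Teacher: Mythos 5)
Your proof is correct and takes essentially the same route as the proof of this lemma in the cited source \cite[Lemma 13]{gurski-braid} (the present paper does not reprove it): one simply evaluates the algebra structure map $\theta_n\colon \mathscr{P}(n)\times X^n\to X$ along points, paths, and squares in $\mathscr{P}(n)$, exactly as in your displayed formulas. The only remark is that the exponential-law/compactly-generated discussion is dispensable, since each of your formulas is already continuous as a composite of continuous maps $X^n\times I^k\to \mathscr{P}(n)\times X^n\to X$, with no currying required.
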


\begin{thm}\label{en}
Let $X$ be an algebra over the operad $\sC_n$. Then $\Pi_2 X$ has the structure of a
\begin{enumerate}
 \item[(a)] sylleptic monoidal bicategory if $n=3$,
 \item[(b)] symmetric monoidal bicategory if $n\geq 4$.
\end{enumerate}
\end{thm}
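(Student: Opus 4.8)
The plan is to realise the entire symmetric (resp.\ sylleptic) structure out of homotopy-theoretic data in the configuration spaces $\sC_n(k)$ by means of Lemma \ref{operad}, which turns points, paths, and homotopies-of-homotopies in $\sC_n(k)$ into $k$-ary operations, pseudonatural transformations, and modifications on $\Pi_2 X$ (here using that $\Pi_2$ preserves finite products up to biequivalence, so a map $X^k \to X$ yields a homomorphism $(\Pi_2 X)^k \to \Pi_2 X$). First I would note that the operad inclusion $\sC_2 \hookrightarrow \sC_n$, $\alpha \mapsto \alpha \times \mathrm{id}_{J^{n-2}}$, makes any $\sC_n$-algebra into a $\sC_2$-algebra; hence by the $E_2$ result of \cite{gurski-braid} the bigroupoid $\Pi_2 X$ already carries a braided monoidal structure $(\otimes, I, \mathbf{a}, \mathbf{l}, \mathbf{r}, \pi, \mu, \rho, \lambda, R, R_{-|-,-}, R_{-,-|-})$ for every $n \geq 2$. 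It then remains only to produce the syllepsis $v$ and to verify the syllepsis and symmetry axioms.

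The key geometric input is the standard homotopy equivalence $\sC_n(2) \simeq F(\R^n,2) \simeq S^{n-1}$ (sending a pair of disjoint cubes to the normalised difference of their centres) and, more generally, the fact that the ordered configuration space $\sC_n(k) \simeq F(\R^n, k)$ is $(n-2)$-connected. Since $\pi_0(S^{n-1}) = 0$ for $n \geq 2$, a path in $\sC_n(2)$ between the chosen point and its swap exists, which is exactly what makes the braiding $R$ available. To build $v$, observe that under Lemma \ref{operad} the composite $R_{ba}R_{ab}$ is induced by the ``full twist'' loop in $\sC_n(2)$ obtained by concatenating the path defining $R_{ab}$ with the swapped path defining $R_{ba}$. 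For $n \geq 3$ we have $\pi_1(S^{n-1}) = 0$, so this loop is null-homotopic; choosing a null-homotopy $\Phi \colon I^2 \to \sC_n(2)$ and applying the third clause of Lemma \ref{operad} yields the required 2-cell $v_{ab}\colon R_{ba}R_{ab} \Rightarrow 1$.

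Next I would check the axioms. Every axiom in sight --- the modification condition on $v$, the two syllepsis axioms, and the symmetry axiom --- is an equation between two parallel 2-cells of $\Pi_2 X$. Pasting the homotopies contributed by the various structure 2-cells, both sides of such an equation are of the form $\widetilde{\Phi}$ for maps $\Phi_0, \Phi_1 \colon I^2 \to \sC_n(k)$ agreeing on $\partial I^2$, and the equation holds precisely when $\Phi_0$ and $\Phi_1$ are homotopic rel boundary. As $\pi_1(\sC_n(k)) = 0$ for $n \geq 3$, the sole obstruction is the class of the glued sphere $\Phi_0 \cup \overline{\Phi_1} \colon S^2 \to \sC_n(k)$ in $\pi_2(\sC_n(k))$. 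For $n \geq 4$ the space $\sC_n(k)$ is $2$-connected, so $\pi_2(\sC_n(k)) = 0$ and every obstruction vanishes; thus $v$ is a modification and all the syllepsis and symmetry axioms hold automatically, giving a symmetric monoidal bicategory and proving the case $n \geq 4$.

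For the case $n = 3$ the same reduction applies, but $\pi_2(\sC_3(k))$ need not vanish. The symmetry axiom takes place entirely in $\sC_3(2) \simeq S^2$, and its obstruction is exactly a generator of $\pi_2(S^2) = \Z$; this is nonzero, which is precisely why $\Pi_2 X$ is only sylleptic and not symmetric. The remaining task, which I expect to be the main obstacle, is to verify the two syllepsis axioms, whose obstructions live in $\pi_2$ of $\sC_3(2)$ and $\sC_3(3)$: although these groups are nonzero, one must show that the particular combinations of homotopy classes contributed by the braiding 2-cells $R_{-|-,-}$, $R_{-,-|-}$ and by $v$ cancel. I anticipate that this requires an explicit computation using the fibrations $F(\R^3,k) \to F(\R^3,k-1)$, with fibre $\R^3$ minus finitely many points, in order to identify the relevant elements of $\pi_2(F(\R^3,3))$ and check that they agree --- the ``uninteresting but substantial calculation'' flagged in the introduction.
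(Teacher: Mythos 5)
Your treatment of part (b) is correct and is essentially the paper's own argument: the braided structure comes from the $E_2$ case of \cite{gurski-braid} (the paper realizes this by embedding the 2-cube construction in $\sC_4$ with the extra coordinates fixed at $\frac{1}{2}$, which amounts to your restriction along $\sC_2 \hookrightarrow \sC_n$), a nullhomotopy of the loop underlying $R_{ba}R_{ab}$ supplies $v$, and the vanishing of $\pi_2(\sC_n(k))$ for $n \geq 4$ makes the modification condition and all axioms automatic. The only cosmetic differences are that the paper writes the nullhomotopy out in explicit coordinates rather than invoking $\pi_1(S^{n-1})=0$, and handles $n>4$ by restricting along the operad map $\sC_4 \to \sC_n$; either route is fine.

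Part (a), however, has a genuine gap. For $n=3$ you correctly identify that the obstructions to the syllepsis axioms live in the nonvanishing group $\pi_2(\sC_3(3))$ and must be shown to vanish for the particular pasting composites at hand, but you do not carry out this verification --- you only ``anticipate'' a computation with the fibrations $F(\R^3,k)\to F(\R^3,k-1)$. This is precisely where the paper does its real work: it constructs a specific syllepsis, in the form of its mate $\widehat{v}\colon R \Rightarrow R^{\dot}$, given by an explicit map $I^2 \to \sC_3(2)$ that rotates the two little cubes past one another using the third coordinate, and then proves one syllepsis axiom by exhibiting an explicit homotopy between the two maps $[0,3]\times[0,3]\to \sC_3(3)$, built through intermediate stages (the paths $\delta$ and $\gamma$, the linear homotopies $L,B,T,M$, and the interpolating homotopies $H$, $K$, $\Phi$), the other axiom following by symmetry of the construction. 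Note also that your framing of ``choose any nullhomotopy'' is not adequate even as a starting point for $n=3$: since $\pi_2(\sC_3(2)) = \Z \neq 0$, different nullhomotopies yield genuinely different syllepses, and whether the syllepsis axioms hold depends on that choice; so one must produce the right $v$ and then verify the axioms for that specific choice, which is exactly the ``uninteresting but substantial calculation'' your proposal defers.
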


\begin{proof}[Proof of Theorem \ref{en}(b)]
 The proof will follow the same argument of the proof of \cite[Theorem 15]{gurski-braid}. Indeed, the construction of the braided monoidal structure is exactly the same, with the first two coordinates of the centers of the little 4-cubes being the same as for the 2-cubes in \cite{gurski-braid}, and the two extra coordinates being constant at $\frac{1}{2}$. For example, the multiplication is given by the functor
\[
\Pi_2 X \times \Pi_2 X \cong \Pi_2(X\times X) \fto{\Pi _ 2 \mu_m} \Pi_2 X,
\]
where $m\in \sC_4(2)$ is the point given by the two 4-cubes with side length $\dfrac{1}{5}$, and centers at
\[
\biggl(\dfrac{3}{10}, \hf, \hf, \hf\biggr) \quad \text{and} \quad \biggl(\dfrac{7}{10}, \hf, \hf, \hf\biggr).
\]

From now on we will ignore the sizes of the cubes, as they can be made small enough to ensure that the little cubes do not intersect as long as the centers are not equal. The remaining data needed is the syllepsis $v$. This is a homotopy between the path given by $R_{yx}R_{xy}$ and the constant path at $xy$, or equivalently a nullhomotopy for $R_{yx}R_{xy}$. By Lemma \ref{operad}, this can be constructed as a map $I^2 \to \sC_4(2)$, but we will instead construct is as a map $v\colon D^2 \to \sC_4(2)$ whose boundary is given by the path $R_{yx}R_{xy}$. We give the value of the map on the centers of the 4-cubes as follows, where we are using polar coordinates $(r,t)$ to parametrize the disc:
\begin{align*}
v_1(t,r)&=\biggl( \hf + \dfrac{r}{5}\cos(\pi +2\pi t), \hf + \dfrac{r}{5}\sin(\pi +2\pi t), \hf +\dfrac{1}{5}\sqrt{1-r^2},\hf\biggr)\\
v_2(r,t)&=\biggl( \hf + \dfrac{r}{5}\cos(2\pi t), \hf + \dfrac{r}{5}\sin(2\pi t), \hf -\dfrac{1}{5}\sqrt{1-r^2},\hf\biggr).
\end{align*}

Note that the third coordinate is necessary to ensure that for all $0\leq r \leq 1$, $0\leq t<2\pi$, the points $v_1(r,t)$ and $v_2(r,y)$ are different, and thus we get a point in $\sC_4(2)$.

It remains to prove that the syllepsis and the symmetry axioms hold. All the 2-cells in question have been constructed as maps $D^2\to \sC_4(k)$ for some $k\geq0$. Since $\pi_2(\sC_4(k))=0$ for all $k\geq 0$, all diagrams involving 2-cells constructed in this fashion will commute, in particular, the axioms for a symmetric monoidal bicategory will hold automatically.

If $X$ is an algebra over $\sC_n$ for $n>4$, we use the map of operads $\sC_4 \rightarrow \sC_n$ given by sending a little 4-cube $\alpha$ to $\alpha \times J^{n-4}$ to induce an algebra structure over $\sC_4$.  We then apply the above construction.

\end{proof}

\begin{rem}
The reader might have noticed the discrepancy between the results of \S\ref{gamma}, where symmetric monoidal implied $E_{\infty}$, and the results of this section, where $E_4$ is enough to guarantee the symmetric monoidal structure. This difference is explained by the fact that the fundamental 2-groupoid of a space depends only on its homotopy 2-type. The obstructions to lifting an $E_4$ structure to an $E_{\infty}$ structure live in higher homotopy groups, thus are not seen by the fundamental 2-groupoid.
\end{rem}

The proof given above of the symmetric structure on $\Pi_{2}X$ uses an explicit construction of the homotopy $v:R^{2} \Rightarrow 1$.  We could instead construct its mate, a homotopy $\widehat{v}:R \Rightarrow R^{\dot}$, and proceed with the rest of the proof using this 2-cell.  (Note that $\widehat{v}$ corresponds to the 2-cell $t^{-1}$ used in $\S$3.1, but since $t$ is traditionally used as the time variable when expressing homotopies, we prefer to denote this 2-cell as $\widehat{v}$ in this context.)  While checking the symmetry axioms would proceed as before using the fact that $\pi_{2}(\sC_4(k))=0$ for any value of $k$, we can use this construction to study the fundamental 2-groupoid of an $E_{3}$ space as well.

\begin{rem}
Note that the proof of Theorem \ref{en}(a) is not quite as easy as the symmetric case, since the relevant homotopy groups of
$\sC_3(k)$ are nonzero; see \cite{fn} for computations of these homotopy groups.  The key point is then showing that the syllepsis axioms hold, which reduces to constructing a homotopy between a pair of maps $I^{2} \rightarrow \sC_3(3)$.

In fact, this should be a completely general phenomenon.  Assuming an explicit definition of symmetric monoidal $n$-category and of fundamental $n$-groupoids, showing that $\Pi_{n}X$, where $X$ is a space with an $E_{n+k}$ structure for $k \geq 2$, amounts merely to constructing data, as the axioms will all reduce to computations in homotopy groups which are known to be zero.  The case $k=1$ is what we are considering here for $n=2$, and for larger values of $n$ a similar strategy must be followed:  the top-dimensional data will be given as maps $I^{n} \rightarrow \sC_{n+1}(j)$, and the axioms will be equivalent to showing that certain elements of $\pi_{n}(\sC_{n+1}(j)($ are zero even though this is a nonzero homotopy group.  Hence such a proof requires explicit knowledge of the maps $I^{n} \rightarrow \sC_{n+1}(j)$.
\end{rem}

\begin{proof}[Proof of Theorem \ref{en}(a)]
Recall from \cite{gurski-braid} that the 1-cell $R_{a,b}$ is defined as the following path in $\sC_3(2)$. 
\begin{align*}
 R_a(t)&=\biggl( \hf + \dfrac{1}{5}\cos(\pi + \pi t),\hf +\dfrac{1}{5}\sin(\pi + \pi t)\biggr)\\
 R_b(t)&=\biggl( \hf + \dfrac{1}{5}\cos(\pi t),\hf +\dfrac{1}{5}\sin(\pi t)\biggr),
\end{align*}
where each function defines the center of a little 3-cube, so taken as a pair give values in $\sC_3(2)$; the subscripts $a,b$ are used to label the little 3-cubes. As above, we will ignore the lengths of the sides of the cubes.

We first define the map $\widehat{v}:I^2 \to \sC_3(2)$, so that the boundaries are as in the figure below.  Note that that top boundary is the path that gives the 1-cell $R$, while the bottom boundary is the pseudo-inverse $R^{\dot}$, which is merely $R$ run backwards.
\[
\xy
{\ar@{-}^{R} (0,0)*{}; (30,0)*{} };
{\ar@{-}_{R^{\dot}} (0,-15)*{}; (30,-15)*{} };
{\ar@{-} (0,0)*{}; (0,-15)*{} };
{\ar@{-} (30,0)*{}; (30,-15)*{} };
(15,-7.5)*{\widehat{v}}
\endxy
\]
We remind the reader that we use the conventions of \cite{gurski-braid} in which a homotopy between two paths is presented as a function with domain $I^{2}$.  The variable $t$ is that of the functions $f,g$ that we are constructing a homotopy between, and runs horizontally, while the variable $s$ is the variable of the homotopy itself that runs from top to bottom.
\begin{align*}
\widehat{v}_a(t,s)&=\biggl( \hf+\dfrac{1}{5}\cos(\pi +\pi t),\hf +\dfrac{1}{5}(1-2s)\sin(\pi+\pi t), \hf +\dfrac{1}{4}\sin(\pi s)\biggr)\\
\widehat{v}_b(t,s)&=\biggl( \hf+\dfrac{1}{5}\cos(\pi t),\hf +\dfrac{1}{5}(1-2s)\sin(\pi t), \hf -\dfrac{1}{4}\sin(\pi s)\biggr)
\end{align*}

We need to prove that this $\widehat{v}$ satisfies the syllepsis axioms. We will prove one of them; the proof for the other axiom works the same, as the axioms are symmetric. Proving that the axiom holds is equivalent to constructing a homotopy between the following two maps $[0,3]\times [0,3]\to \sC_3(3)$.
\[
\xy
{\ar@{-}^{R1} (0,0)*{}; (10,0)*{} };
{\ar@{-}^{a} (10,0)*{}; (20,0)*{} };
{\ar@{-}^{1R} (20,0)*{}; (30,0)*{} };
{\ar@{-} (0,0)*{}; (0,-30)*{} };
{\ar@{-} (30,0)*{}; (30,-30)*{} };
{\ar@{-}_{R^{\dot}1} (0,-10)*{}; (10,-10)*{} };
{\ar@{-}_{a} (10,-10)*{}; (20,-10)*{} };
{\ar@{-}_{1R^{\dot}} (20,-10)*{}; (30,-10)*{} };
{\ar@{-} (10,0)*{}; (10,-10)*{} };
{\ar@{-} (20,0)*{}; (20,-10)*{} };
{\ar@{-}_{\delta} (0,-20)*{}; (30,-20)*{} };
{\ar@{-}_{a} (0,-30)*{}; (10,-30)*{} };
{\ar@{-}_{R^{\dot}} (10,-30)*{}; (20,-30)*{} };
{\ar@{-}_{a} (20,-30)*{}; (30,-30)*{} };
{\ar@{.>} (38,-15)*{}; (52,-15)*{} };
{\ar@{-}^{R1} (60,0)*{}; (70,0)*{} };
{\ar@{-}^{a} (70,0)*{}; (80,0)*{} };
{\ar@{-}^{1R} (80,0)*{}; (90,0)*{} };
{\ar@{-}^{\gamma} (60,-10)*{}; (90,-10)*{} };
{\ar@{-}_{a} (60,-30)*{}; (70,-30)*{} };
{\ar@{-}_{R^{\dot}} (70,-30)*{}; (80,-30)*{} };
{\ar@{-}_{a} (80,-30)*{}; (90,-30)*{} };
{\ar@{-}^{a} (60,-20)*{}; (70,-20)*{} };
{\ar@{-}^{R} (70,-20)*{}; (80,-20)*{} };
{\ar@{-}^{a} (80,-20)*{}; (90,-20)*{} };
{\ar@{-} (60,0)*{}; (60,-30)*{} };
{\ar@{-} (90,0)*{}; (90,-30)*{} };
{\ar@{-} (70,-20)*{}; (70,-30)*{} };
{\ar@{-} (80,-20)*{}; (80,-30)*{} };
(5,-5)*{\widehat{v}1}; (15,-5)*{1_{a}}; (25,-5)*{1\widehat{v}};
(65,-25)*{1_{a}}; (75,-25)*{\widehat{v}}; (85,-25)*{1_{a}}; (15,-16)*{\scriptstyle L}; (15,-26)*{\scriptstyle B}; (75,-4)*{\scriptstyle T}; (75,-14)*{\scriptstyle M}
\endxy
\]

The intermediate step $\delta$ in the source above is the path $[0,3]\to \sC_3(3)$ given by
\begin{align*}
\delta_a(t)&=\biggl( \hf + \dfrac{6}{25} \cos (\pi + \dfrac{\pi}{3}t),\hf + \dfrac{6}{25}\sin(\pi -\dfrac{\pi}{3}t),\hf\biggr)\\
\delta_b(t)&=\biggl( \dfrac{8}{25} + \dfrac{1}{50} \cos (\dfrac{\pi}{3}t),\hf + \dfrac{1}{50}\sin(-\dfrac{\pi}{3}t),\hf\biggr)\\
\delta_c(t)&=\biggl( \dfrac{17}{25} + \dfrac{1}{50} \cos (\dfrac{\pi}{3}t),\hf + \dfrac{1}{50}\sin(-\dfrac{\pi}{3}t),\hf\biggr).
\end{align*}
On the other hand, the intermediate step $\gamma$ in the target diagram is given by the path $[0,3]\to \sC_3(3)$:
\begin{align*}
\gamma_a(t)&=\biggl( \hf + \dfrac{6}{25} \cos (\pi + \dfrac{\pi}{3}t),\hf + \dfrac{6}{25}\sin(\pi +\dfrac{\pi}{3}t),\hf\biggr)\\
\gamma_b(t)&=\biggl( \dfrac{8}{25} + \dfrac{1}{50} \cos (\dfrac{\pi}{3}t),\hf + \dfrac{1}{50}\sin(\dfrac{\pi}{3}t),\hf\biggr)\\
\gamma_c(t)&=\biggl( \dfrac{17}{25} + \dfrac{1}{50} \cos (\dfrac{\pi}{3}t),\hf + \dfrac{1}{50}\sin(\dfrac{\pi}{3}t),\hf\biggr).
\end{align*}

The homotopies $L,B,T,M$ from the diagram are the obvious linear homotopies, which remain constant in the third coordinate. Note that $T$ and $M$ can be constructed so that the following equations hold, where $L_i$ and $B_i$ denote the $i$th coordinate of the functions $L$ and $B$ respectively.
\begin{align*}
 T(t,s)&=\biggl(L_1(t,s-1), 1-L_2(t, s-1), \hf\biggr)\\
 M(t,s)&=\biggl(B_1(t,s-1),1-B_2(t,s-1),\hf\biggr)
\end{align*}

To construct the homotopy we need, we construct homotopies to and from an intermediate step, as shown in the figure below.
\[
\xy 0;/r.20pc/:
{\ar@{-}^{R1} (0,0)*{}; (10,0)*{} };
{\ar@{-}^{a} (10,0)*{}; (20,0)*{} };
{\ar@{-}^{1R} (20,0)*{}; (30,0)*{} };
{\ar@{-} (0,0)*{}; (0,-30)*{} };
{\ar@{-} (30,0)*{}; (30,-30)*{} };
{\ar@{-}_{R^{\dot}1} (0,-10)*{}; (10,-10)*{} };
{\ar@{-}_{a} (10,-10)*{}; (20,-10)*{} };
{\ar@{-}_{1R^{\dot}} (20,-10)*{}; (30,-10)*{} };
{\ar@{-} (10,0)*{}; (10,-10)*{} };
{\ar@{-} (20,0)*{}; (20,-10)*{} };
{\ar@{-} (0,-20)*{}; (30,-20)*{} };
{\ar@{-}_{a} (0,-30)*{}; (10,-30)*{} };
{\ar@{-}_{R^{\dot}} (10,-30)*{}; (20,-30)*{} };
{\ar@{-}_{a} (20,-30)*{}; (30,-30)*{} };
{\ar@{.>} (48,-15)*{}; (62,-15)*{} };
{\ar@{-}^{R1} (80,0)*{}; (90,0)*{} };
{\ar@{-}^{a} (90,0)*{}; (100,0)*{} };
{\ar@{-}^{1R} (100,0)*{}; (110,0)*{} };
{\ar@{-} (80,-10)*{}; (110,-10)*{} };
{\ar@{-}_{a} (80,-30)*{}; (90,-30)*{} };
{\ar@{-}_{R^{\dot}} (90,-30)*{}; (100,-30)*{} };
{\ar@{-}_{a} (100,-30)*{}; (110,-30)*{} };
{\ar@{-}^{a} (80,-20)*{}; (90,-20)*{} };
{\ar@{-}^{R} (90,-20)*{}; (100,-20)*{} };
{\ar@{-}^{a} (100,-20)*{}; (110,-20)*{} };
{\ar@{-} (80,0)*{}; (80,-30)*{} };
{\ar@{-} (110,0)*{}; (110,-30)*{} };
{\ar@{-} (90,-20)*{}; (90,-30)*{} };
{\ar@{-} (100,-20)*{}; (100,-30)*{} };
(5,-5)*{\widehat{v}1}; (15,-5)*{1_{a}}; (25,-5)*{1\widehat{v}};
(85,-25)*{1_{a}}; (95,-25)*{\widehat{v}}; (105,-25)*{1_{a}};
{\ar@{-}^{R1} (40,-45)*{}; (50,-45)*{} };
{\ar@{-}^{a} (50,-45)*{}; (60,-45)*{} };
{\ar@{-}^{1R} (60,-45)*{}; (70,-45)*{} };
{\ar@{-}_{a} (40,-75)*{}; (50,-75)*{} };
{\ar@{-}_{R^{\dot}} (50,-75)*{}; (60,-75)*{} };
{\ar@{-}_{a} (60,-75)*{}; (70,-75)*{} };
{\ar@{-} (40,-45)*{}; (40,-75)*{} };
{\ar@{-} (70,-45)*{}; (70,-75)*{} };
{\ar@{-} (40,-55)*{}; (70,-55)*{} };
{\ar@{-} (40,-65)*{}; (70,-65)*{} };
(55,-50)*{T}; (55,-70)*{B}; (55,-60)*{H};
(15,-25)*{B}; (95,-5)*{T};
{\ar@/_1pc/@{.>}_{\alpha_{1}} (15,-35)*{}; (35,-60)*{} };
{\ar@/_1pc/@{.>}_{\alpha_{2}} (75,-60)*{}; (95,-35)*{} };
(15,-16)*{\scriptstyle L};  (95,-14)*{\scriptstyle M}
\endxy
\]

The homotopy $H$ from $\gamma$ to $\delta$ is the map $[0,3]\times[1,2]\to \sC_3(3)$ defined as
\begin{align*}
H_a(t)&=\biggl( \hf + \dfrac{6}{25} \cos (\pi + \dfrac{\pi}{3}t),\hf + \dfrac{6}{25}(3-2s)\sin(\pi +\dfrac{\pi}{3}t),\hf+\dfrac{1}{4}\sin(\pi(s-1))\biggr)\\
H_b(t)&=\biggl( \dfrac{8}{25} + \dfrac{1}{50} \cos (\dfrac{\pi}{3}t),\hf + \dfrac{1}{50}(3-2s)\sin(\dfrac{\pi}{3}t),\hf-\dfrac{1}{4}\sin(\pi(s-1))\biggr)\\
H_c(t)&=\biggl( \dfrac{17}{25} + \dfrac{1}{50} \cos (\dfrac{\pi}{3}t),\hf + \dfrac{1}{50}(3-2s)\sin(\dfrac{\pi}{3}t),\hf-\dfrac{1}{4}\sin(\pi(s-1))\biggr).
\end{align*}

Note that the homotopy $(1\widehat{v})a(\widehat{v}1)$ followed by $L$ first switches the sign of the second coordinate by using the third coordinate, and then smooths the paths linearly. On the other hand, $T$ followed by $H$ first linearizes and then switches the signs. Thus, in order to construct the homotopy $\alpha_1$, we will construct a further intermediate step as in the figure below.

\[
\xy 0;/r.20pc/:
{\ar@{-}^{R1} (0,0)*{}; (10,0)*{} };
{\ar@{-}^{a} (10,0)*{}; (20,0)*{} };
{\ar@{-}^{1R} (20,0)*{}; (30,0)*{} };
{\ar@{-} (0,0)*{}; (0,-30)*{} };
{\ar@{-} (30,0)*{}; (30,-30)*{} };
{\ar@{-}_{R^{\dot}1} (0,-10)*{}; (10,-10)*{} };
{\ar@{-}_{a} (10,-10)*{}; (20,-10)*{} };
{\ar@{-}_{1R^{\dot}} (20,-10)*{}; (30,-10)*{} };
{\ar@{-} (10,0)*{}; (10,-10)*{} };
{\ar@{-} (20,0)*{}; (20,-10)*{} };
{\ar@{-} (0,-20)*{}; (30,-20)*{} };
{\ar@{-}_{a} (0,-30)*{}; (10,-30)*{} };
{\ar@{-}_{R^{\dot}} (10,-30)*{}; (20,-30)*{} };
{\ar@{-}_{a} (20,-30)*{}; (30,-30)*{} };
{\ar@{.>} (48,-15)*{}; (62,-15)*{} };
{\ar@{-}^{R1} (80,0)*{}; (90,0)*{} };
{\ar@{-}^{a} (90,0)*{}; (100,0)*{} };
{\ar@{-}^{1R} (100,0)*{}; (110,0)*{} };
{\ar@{-} (80,-10)*{}; (110,-10)*{} };
{\ar@{-}_{a} (80,-30)*{}; (90,-30)*{} };
{\ar@{-}_{R^{\dot}} (90,-30)*{}; (100,-30)*{} };
{\ar@{-}_{a} (100,-30)*{}; (110,-30)*{} };
{\ar@{-}^{a} (80,-20)*{}; (90,-20)*{} };
{\ar@{-}^{R} (90,-20)*{}; (100,-20)*{} };
{\ar@{-}^{a} (100,-20)*{}; (110,-20)*{} };
{\ar@{-} (80,0)*{}; (80,-30)*{} };
{\ar@{-} (110,0)*{}; (110,-30)*{} };
{\ar@{-} (90,-20)*{}; (90,-30)*{} };
{\ar@{-} (100,-20)*{}; (100,-30)*{} };
(5,-5)*{\widehat{v}1}; (15,-5)*{1_{a}}; (25,-5)*{1\widehat{v}};
(85,-25)*{1_{a}}; (95,-25)*{\widehat{v}}; (105,-25)*{1_{a}};
{\ar@{-}^{R1} (40,-75)*{}; (50,-75)*{} };
{\ar@{-}^{a} (50,-75)*{}; (60,-75)*{} };
{\ar@{-}^{1R} (60,-75)*{}; (70,-75)*{} };
{\ar@{-}_{a} (40,-105)*{}; (50,-105)*{} };
{\ar@{-}_{R^{\dot}} (50,-105)*{}; (60,-105)*{} };
{\ar@{-}_{a} (60,-105)*{}; (70,-105)*{} };
{\ar@{-} (40,-75)*{}; (40,-105)*{} };
{\ar@{-} (70,-75)*{}; (70,-105)*{} };
{\ar@{-} (40,-85)*{}; (70,-85)*{} };
{\ar@{-} (40,-95)*{}; (70,-95)*{} };
(55,-80)*{T}; (55,-100)*{B}; (55,-90)*{H};
(15,-25)*{B}; (95,-5)*{T};
{\ar@{-}^{R1} (0,-45)*{}; (10,-45)*{} };
{\ar@{-}^{a} (10,-45)*{}; (20,-45)*{} };
{\ar@{-}^{1R} (20,-45)*{}; (30,-45)*{} };
{\ar@{-}_{a} (0,-75)*{}; (10,-75)*{} };
{\ar@{-}_{R^{\dot}} (10,-75)*{}; (20,-75)*{} };
{\ar@{-}_{a} (20,-75)*{}; (30,-75)*{} };
{\ar@{-} (0,-45)*{}; (0,-75)*{} };
{\ar@{-} (30,-45)*{}; (30,-75)*{} };
{\ar@{-} (0,-65)*{}; (30,-65)*{} };
{\ar@{-}^{R1} (80,-45)*{}; (90,-45)*{} };
{\ar@{-}^{a} (90,-45)*{}; (100,-45)*{} };
{\ar@{-}^{1R} (100,-45)*{}; (110,-45)*{} };
{\ar@{-}_{a} (80,-75)*{}; (90,-75)*{} };
{\ar@{-}_{R^{\dot}} (90,-75)*{}; (100,-75)*{} };
{\ar@{-}_{a} (100,-75)*{}; (110,-75)*{} };
{\ar@{-} (80,-45)*{}; (80,-75)*{} };
{\ar@{-} (110,-45)*{}; (110,-75)*{} };
{\ar@{-} (80,-55)*{}; (110,-55)*{} };
{\ar@{.>} (15,-35)*{}; (15,-40)*{} };
{\ar@/_0.5pc/@{.>} (15,-80)*{}; (35,-90)*{} };
{\ar@{.>} (95,-40)*{}; (95,-35)*{} };
{\ar@/_0.5pc/@{.>} (75,-90)*{}; (95,-80)*{} };
(15,-70)*{B}; (95,-50)*{T}; (15,-55)*{K}; (95,-65)*{J};
(15,-16)*{\scriptstyle L};  (95,-14)*{\scriptstyle M}
\endxy
\]

The homotopy $K$ is given as
\[K(t,s)=\biggl(L_1(t,\dfrac{s}{2}+1), (1-\dfrac{s}{2})+(s-1)L_2(t,\dfrac{s}{2}+1),\hf \pm\dfrac{1}{4}\sin(\dfrac{\pi s}{2})\biggr),\]
where the plus or minus sign in the last coordinate depends on which little cube is being moved: for $a$ it is plus, and for $b$ and $c$ it is minus.

There is a homotopy $\Phi\colon [0,3]\times [0,2]\times [0,1] \to \sC_3(3)$ from the top two-thirds of the upper left function to $K$, with cases given for $0\leq s\leq 1$ and $1\leq s\leq 2$, respectively.

\[
\Phi(t,s,u)= \left\{ \begin{array}{l}
\scriptstyle \biggl(L_1(t,\dfrac{su}{2}+1), \, 1-s(1-\dfrac{u}{2})+(s(2-u)-1)L_2(t,\dfrac{su}{2}+1), \, \hf \pm \dfrac{1}{4}\sin(\pi s (1-\dfrac{u}{2}))\biggr)\\
\scriptstyle \biggl(L_1(t,s+(1-\dfrac{s}{2})u), \, (1-\dfrac{s}{2})u+((s-2)u+1)L_2(t,s+(1-\dfrac{s}{2})u), \, \hf \pm \dfrac{1}{4}\sin(\dfrac{\pi s u}{2})\biggr)
\end{array} \right.
\]
We leave it to the reader to check that this is indeed a homotopy as desired.  With similar methods we can construct the homotopy from $K$ to $HT$, and thus give the homotopy $\alpha_1$.  The analogous argument produces $\alpha_{2}$ as well, verifying the first syllepsis axiom.
\end{proof}

\section{Proof of coherence}

This is the heart of the paper, in which we give a proof of coherence for symmetric monoidal bicategories.  We begin by reducing the problem from computing in the free symmetric monoidal bicategory on a set of objects to computing in the ``positive braid part'' of that same free symmetric monoidal bicategory.  This allows us to use purely algebraic results about the positive braid monoid.  We then finish the proof using a rewriting argument.

\subsection{Positive braids}

This section will establish some basic results that will be necessary in reducing the coherence theorem to one which will be easily approachable by rewriting methods.  Recall that the free symmetric monoidal bicategory on one object has 1-cells which are freely generated by the structural 1-cells present in a symmetric monoidal bicategory:  associativity and unit constraints for the monoidal structure, and the braiding 1-cells $R, R^{\dot}$.  The focus of this section, then, is to remove the 1-cells $R^{\dot}$ from this structure.  Geometrically, this means replacing arbitrary braids with those only having over-crossings, in other words restricting to positive braids.  As positive braids, and the positive braid monoid, have been studied independently as purely algebraic objects, we will then be able to apply traditional combinatorial techniques.  A crucial first observation is the following.

\begin{lem}\label{RRadj}
There is an adjoint equivalence $R_{xy} \dashv_{eq} R_{yx}$ with unit given by $v_{xy}^{-1}$ and counit given by $v_{yx}$.
\end{lem}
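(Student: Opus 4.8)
The plan is to verify directly that the data $(R_{xy}, R_{yx}, v_{xy}^{-1}, v_{yx})$ satisfies the definition of an adjoint equivalence. First note that the boundaries match: since $v_{xy}\colon R_{yx}R_{xy} \Rightarrow 1_{xy}$, its inverse $v_{xy}^{-1}\colon 1_{xy} \Rightarrow R_{yx}R_{xy}$ has exactly the source and target required of a unit for $R_{xy} \dashv R_{yx}$, and replacing $x,y$ by $y,x$ shows that $v_{yx}\colon R_{xy}R_{yx} \Rightarrow 1_{yx}$ has the source and target required of the counit. Because $v$ is an invertible modification, both of these 2-cells are invertible, so once the triangle identities are established we obtain an \emph{adjoint} equivalence (rather than a mere adjunction). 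Thus the entire content of the lemma is the two triangle identities.

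The key input is the symmetry axiom, which is precisely a statement about a threefold composite of braidings. Instantiating that axiom at the pair $(x,y)$ identifies the two ways of contracting the composite $R_{xy}R_{yx}R_{xy}\colon xy \to yx$ down to $R_{xy}$: contracting the two innermost braidings by $v_{xy}$ and contracting the two outermost braidings by $v_{yx}$ give equal 2-cells $R_{xy}R_{yx}R_{xy} \Rightarrow R_{xy}$, which in whiskered notation reads $1_{R_{xy}} \ast v_{xy} = v_{yx} \ast 1_{R_{xy}}$. The first triangle identity is the composite $(v_{yx}\ast 1_{R_{xy}}) \circ (1_{R_{xy}} \ast v_{xy}^{-1})$; substituting the symmetry axiom for the first factor turns this into $(1_{R_{xy}}\ast v_{xy}) \circ (1_{R_{xy}}\ast v_{xy}^{-1})$, which collapses to $1_{R_{xy}}\ast(v_{xy}\circ v_{xy}^{-1}) = 1_{R_{xy}}$ by functoriality of whiskering and the invertibility of $v$.

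The second triangle identity I would handle in exactly the same way, now using the instance of the symmetry axiom at the pair $(y,x)$, which gives $1_{R_{yx}}\ast v_{yx} = v_{xy}\ast 1_{R_{yx}}$ as 2-cells $R_{yx}R_{xy}R_{yx} \Rightarrow R_{yx}$; substituting into $(1_{R_{yx}}\ast v_{yx}) \circ (v_{xy}^{-1}\ast 1_{R_{yx}})$ again produces a cancelling pair $v_{xy}\circ v_{xy}^{-1}$ and hence the identity. Alternatively, one could verify only the first triangle identity and then invoke the standard fact that an equivalence whose unit and counit are invertible and satisfy one triangle identity automatically satisfies the other; but since we have the symmetry axiom available in both variable orders, checking both directly is just as quick.

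The one genuinely fiddly point, which I would relegate to careful diagram-checking, is that the symmetry axiom as drawn carries along the monoidal unit constraints (the isomorphisms labelled $\cong$ in its statement) and the associativity constraint $\mathbf{a}$, none of which are identities in a weak symmetric monoidal bicategory; matching these against the unit constraints implicit in the two triangle identities is the only nontrivial bookkeeping, and it is handled by naturality of $\mathbf{l}$ and $\mathbf{r}$ together with bicategorical coherence. It is worth emphasizing that this argument uses the symmetry axiom in an essential way: in a merely sylleptic monoidal bicategory the two contractions of $R_{xy}R_{yx}R_{xy}$ need not agree, so $v$ would not in general promote $R_{xy} \dashv R_{yx}$ to an adjoint equivalence.
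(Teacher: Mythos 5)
Your proposal is correct and is exactly the argument the paper has in mind: its proof of Lemma \ref{RRadj} consists of the single sentence ``This is an immediate consequence of the symmetry axiom,'' and your verification---using the symmetry axiom at $(x,y)$ and at $(y,x)$ to convert each triangle identity into a cancelling pair $v_{xy}\circ v_{xy}^{-1}$, with invertibility of the modification $v$ upgrading the adjunction to an adjoint equivalence---is just that consequence written out in full, including the coherence-constraint bookkeeping the paper suppresses.
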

\begin{proof}
This is an immediate consequence of the symmetry axiom.
\end{proof}

Using this lemma and the uniqueness of adjoints, we can construct a unique invertible 2-cell $t:R_{xy}^{\dot} \Rightarrow R_{yx}$ such that the equality of pasting diagrams below holds.
\[
\xy
{\ar@/^1pc/^{R_{xy}^{\dot}} (0,0)*+{xy}; (20,10)*+{yx} };
{\ar@/_1pc/_{R_{yx}} (0,0)*+{xy}; (20,10)*+{yx} };
{\ar@/^1pc/^{R_{xy}} (20,10)*+{yx}; (40,0)*+{xy} };
{\ar@/_2pc/_{1} (0,0)*+{xy}; (40,0)*+{xy} };
{\ar@/^1pc/^{R_{xy}^{\dot}} (60,0)*+{xy}; (80,10)*+{yx} };
{\ar@/^1pc/^{R_{xy}} (80,10)*+{yx}; (100,0)*+{xy} };
{\ar@/_2pc/_{1} (60,0)*+{xy}; (100,0)*+{xy} };
(10,6)*{\Downarrow t}; (24,-2)*{\Downarrow v_{xy}}; (50,0)*{=}; (80,0)*{\Downarrow \varepsilon_{xy}}
\endxy
\]

\begin{lem}\label{tisamod}
The components of $t$ given above define a modification.
\end{lem}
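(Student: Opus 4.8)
The plan is to recognize $t$ as the canonical comparison $2$-cell between two competing adjoint pseudo-inverses of the braiding $R$, and to exploit the fact that such a comparison is automatically a modification whenever the relevant unit and counit data are modifications. Recall that $R$ is a pseudonatural adjoint equivalence, \emph{internal to the hom-bicategory whose objects are pseudonatural transformations and whose $2$-cells are modifications}. It therefore has two right adjoints. The first is the structural pseudo-inverse $R^{\dot}$, whose unit $\eta$ and counit $\varepsilon$ are modifications by the very definition of a braided monoidal bicategory. The second is the reversed braiding, exhibited in Lemma~\ref{RRadj} as a right adjoint of $R$ with unit $v^{-1}$ and counit $v$; since $v$ is part of the syllepsis data it is an invertible modification, and hence so is $v^{-1}$.

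First I would invoke the standard uniqueness-of-adjoints construction, carried out in that hom-bicategory. Given a $1$-cell $L$ with two right adjoints $(R_1,\eta_1,\varepsilon_1)$ and $(R_2,\eta_2,\varepsilon_2)$ in any bicategory, the $2$-cell
\[
(R_2 \ast \varepsilon_1)\,(\eta_2 \ast R_1)\colon R_1 \Longrightarrow R_2
\]
is invertible and is the unique $2$-cell compatible with the units and counits. Applying this with $L = R$, with $(R_1,\eta_1,\varepsilon_1)$ the structural adjunction $R \dashv R^{\dot}$, and with $(R_2,\eta_2,\varepsilon_2)$ the adjunction of Lemma~\ref{RRadj} (so $R_2$ is the pseudonatural transformation with component $R_{yx}$, $\eta_2 = v^{-1}$, $\varepsilon_2 = v$), yields an invertible comparison $R^{\dot} \Rightarrow R_2$ assembled purely by whiskering and vertically composing the modifications $\varepsilon$ and $v^{-1}$. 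Because whiskering a modification by a pseudonatural transformation again gives a modification, and vertical composites of modifications are modifications, this comparison \emph{is} a modification.

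It then remains to identify this comparison with $t$. Recall that $t$ was defined as the \emph{unique} invertible $2$-cell $R_{xy}^{\dot} \Rightarrow R_{yx}$ satisfying the displayed pasting equation, which in our notation reads $v \cdot (R \ast t) = \varepsilon$. But this is exactly one of the triangle-type compatibility conditions $\varepsilon_2 \cdot (L \ast t) = \varepsilon_1$ characterizing the comparison of adjoints constructed above. Hence the comparison $2$-cell satisfies the defining property of $t$, and by uniqueness the two coincide. Since the comparison is a modification, so is $t$, as claimed.

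The main obstacle I expect is bookkeeping rather than mathematics: one must fix the conventions for the direction of $R$, identify the reversed braiding $R_{yx}$ with the reindexed pseudonatural transformation $R\tau$ (precomposition with the symmetry of $X \times X$), and place $\eta$, $\varepsilon$, $v$, $v^{-1}$ precisely so that the comparison formula reproduces the exact pasting equation defining $t$. Once these are pinned down, the modification axiom for $t$ is never checked by hand---it is inherited for free from the modification structure of $v$ together with the adjoint-equivalence data of $R$.
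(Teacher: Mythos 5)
Your proof is correct, but it takes a genuinely different, more structural route than the paper's. The paper verifies the modification axiom for $t$ by hand: it writes down the two pastings the axiom requires, notes that since every cell involved is invertible one may test the equality after pasting further invertible cells, and then pastes a naturality square for $R$ and the component $v_{yx}$ onto both sides; the defining equation of $t$ together with the fact that $v$ and $\varepsilon$ are modifications collapses both sides to one common diagram. You instead work one level up, in the functor bicategory $\mathrm{Hom}(X\times X,X)$ whose objects are pseudofunctors, whose 1-cells are pseudonatural transformations, and whose 2-cells are modifications (note this corrects a slip in your wording: the ambient structure cannot be a hom-category whose \emph{objects} are transformations, since adjunctions need a bicategory to live in). There $R\dashv_{eq} R^{\dot}$ with unit and counit modifications by definition of a braided monoidal bicategory, and Lemma \ref{RRadj} upgrades to an adjunction $R\dashv_{eq} R\tau$ (with $\tau$ the swap) because the reversed braiding is the reindexed transformation $R\tau$, reindexing along $\tau$ carries the modification $v$ to a modification, and the triangle identities are componentwise equations already checked in Lemma \ref{RRadj}. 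The canonical comparison of the two right adjoints is then a vertical composite of whiskered modifications, hence a modification, and its components satisfy the pasting equation that uniquely characterizes $t$, so it \emph{is} $t$. Both arguments consume exactly the same ingredients---the two adjunctions, the defining equation of $t$, and the modification property of $v$ and $\varepsilon$---but yours packages them so that the modification axiom is never verified directly, at the modest cost of checking that the data of Lemma \ref{RRadj} globalizes to an adjunction of pseudonatural transformations; the paper's computation is, in effect, the componentwise shadow of your abstract argument, and has the virtue of being self-contained and independent of setting up the functor bicategory.
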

\begin{proof}
To show that $t$ is a modification, we must verify the equality of pastings below, where both of the isomorphisms in squares are naturality isomorphisms.
\[
\xy
{\ar@/^1.5pc/^{R_{xy}^{\dot}} (0,0)*+{yx}; (20,0)*+{xy} };
{\ar@/_1.5pc/_{R_{yx}} (0,0)*+{yx}; (20,0)*+{xy} };
{\ar_{gf} (0,0)*+{yx}; (0,-20)*+{y'x'} };
{\ar^{fg} (20,0)*+{xy}; (20,-20)*+{x'y'} };
{\ar@/_1.5pc/_{R_{y'x'}} (0,-20)*+{y'x'}; (20,-20)*+{x'y'} };
(10,-14)*{\cong}; (10,0)*{\Downarrow t};
{\ar@/^1.5pc/^{R_{xy}^{\dot}} (40,0)*+{yx}; (60,0)*+{xy} };
{\ar_{gf} (40,0)*+{yx}; (40,-20)*+{y'x'} };
{\ar^{fg} (60,0)*+{xy}; (60,-20)*+{x'y'} };
{\ar@/_1.5pc/_{R_{y'x'}} (40,-20)*+{y'x'}; (60,-20)*+{x'y'} };
{\ar@/^1.5pc/^{R_{y'x'}^{\dot}} (40,-20)*+{y'x'}; (60,-20)*+{x'y'} };
(50,-20)*{\Downarrow t}; (50,-6)*{\cong}; (30,-10)*{=}
\endxy
\]
Since all of the 1-cells involved are equivalences and the 2-cells involved are isomorphisms, the two pastings above are equal if and only if they are equal after we paste naturality squares
\[
\xy
{\ar^{R_{xy}} (0,0)*+{xy}; (20,0)*+{yx} };
{\ar^{gf} (20,0)*+{yx}; (20,-10)*+{y'x'} };
{\ar_{fg} (0,0)*+{xy}; (0,-10)*+{x'y'} };
{\ar_{R_{x'y'}} (0,-10)*+{x'y'}; (20,-10)*+{y'x'} };
(10,-5)*{\cong}
\endxy
\]
to the right of each, and then paste $v_{yx}$ along the bottom.  By the definition of $t$ and the fact that both $v$ and $\varepsilon$ are both modifications, it is simple to check that both resulting pastings are equal to the one shown below.
\[
\xy
{\ar^{R_{xy}^{\dot}} (0,0)*+{yx}; (20,0)*+{xy} };
{\ar^{R_{xy}} (20,0)*+{xy}; (40,0)*+{yx} };
{\ar@/_1.5pc/_{1} (0,0)*+{yx}; (40,0)*+{yx} };
{\ar_{gf} (0,0)*+{yx}; (0,-15)*+{y'x'} };
{\ar^{gf} (40,0)*+{yx}; (40,-15)*+{y'x'} };
{\ar_{1} (0,-15)*+{y'x'}; (40,-15)*+{y'x'} };
(20,-4)*{\Downarrow \varepsilon}; (20,-12)*{\cong}
\endxy
\]
\end{proof}

\noindent \textbf{Definition.}  Let $\mathcal{S}_{+}$ be the sub-bicategory of $\mathcal{S}$ with
\begin{itemize}
\item all of the objects of $\mathcal{S}$,
\item 1-cells being those containing no instances of a generating 1-cell of the form $R_{xy}^{\dot}$, and
\item 2-cells being all of those obtained by composing and tensoring the generating 2-cells of $\mathcal{S}$ with the property that their source and target 1-cells are in $\mathcal{S}_{+}$.
\end{itemize}

In particular, the 2-cells of $\mathcal{S}_{+}$ are those which can be formed from the 2-cell constraints arising from the monoidal structure, naturality 2-cells for $R$, the 2-cells $R_{-|--}, R_{--|-}$, and the 2-cells $v$.  These 2-cells do not have any instances of naturality 2-cells for $R^{\dot}$, and they do not have any instances of the unit and counit for the adjoint equivalence $R \dashv_{eq} R^{\dot}$.

The proof of the next proposition uses the theory of icons to prove that the inclusion of $\mathcal{S}_{+}$ into $\mathcal{S}$ is a symmetric monoidal biequivalence.  An icon is a kind of transformation that only exists between a pair of functors which agree on objects.  In order to show that a functor $F:X \rightarrow Y$ is a biequivalence, one usually constructs a pseudo-inverse $G:Y \rightarrow X$ and then proves that the composite functors $FG, GF$ are equivalent to $1_{Y}, 1_{X}$, respectively.  Giving such an equivalence requires constructing a pseudonatural transformation whose components are equivalences.  In our case, one of the composite functors is already equal to the identity, and the other is the identity on objects.  Thus instead of constructing a pseudonatural transformation, it suffices to give an icon whose 2-cell components are isomorphisms.  We refer the reader to \cite{lack, LP} for the general theory of icons.

\begin{prop}\label{positivebraidedstructure}
The bicategory $\mathcal{S}_{+}$ can be given the structure of a symmetric monoidal bicategory such that the inclusion $i: \mathcal{S}_{+} \hookrightarrow \mathcal{S}$ can then be equipped with the structure of a symmetric monoidal functor.  This functor $i$ is then a symmetric monoidal biequivalence.
\end{prop}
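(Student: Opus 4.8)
The plan is to realize $\mathcal{S}_+$ as a sub-symmetric-monoidal-bicategory of $\mathcal{S}$, after re-presenting the braiding of $\mathcal{S}$ so that its chosen pseudo-inverse is $R_{yx}$ rather than $R_{xy}^{\dot}$, and then to exhibit $i$ as one half of a pair of mutually pseudo-inverse functors. The first task is to put a symmetric monoidal structure on $\mathcal{S}_+$. All of the monoidal data ($\otimes$, $I$, $\mathbf{a}$, $\mathbf{l}$, $\mathbf{r}$, and the 2-cells $\pi, \mu, \lambda, \rho$), together with the braiding 1-cell $R$, the structural braiding 2-cells $R_{-|--}, R_{--|-}$, and the syllepsis $v$, already have source and target 1-cells lying in $\mathcal{S}_+$, so these restrict directly. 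The only structural datum that does not restrict is the pseudo-inverse $R_{xy}^{\dot}$ together with its unit and counit. Here Lemma \ref{RRadj} supplies exactly what is needed: inside $\mathcal{S}_+$ it gives an adjoint equivalence $R_{xy} \dashv_{eq} R_{yx}$ whose unit $v_{xy}^{-1}$ and counit $v_{yx}$ are 2-cells between positive braids, hence lie in $\mathcal{S}_+$. Transporting the braided, sylleptic, and symmetric structure of $\mathcal{S}$ along the invertible modification $t: R_{xy}^{\dot} \Rightarrow R_{yx}$ of Lemma \ref{tisamod}, I obtain a symmetric monoidal structure on the underlying bicategory of $\mathcal{S}$ whose braiding pseudo-inverse is $R_{yx}$; since none of its data now involves $R^{\dot}$, this structure restricts to $\mathcal{S}_+$. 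By construction the inclusion $i$ strictly preserves all of this data, so it is a strict symmetric monoidal functor.

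Next I would produce the pseudo-inverse. Since $\mathcal{S} = \mathcal{S}\ast$ is free on a single object, its universal property as the free symmetric monoidal bicategory on one object yields a unique symmetric monoidal functor $G: \mathcal{S} \to \mathcal{S}_+$ sending the generating object to the generating object. Because $G$ preserves the braiding and its chosen pseudo-inverse, it acts on 1-cells by substituting $R_{yx}$ for each occurrence of $R_{xy}^{\dot}$, and on 2-cells by sending the structural generators to those of $\mathcal{S}_+$. On $\mathcal{S}_+$ itself $G$ alters nothing, so $Gi = 1_{\mathcal{S}_+}$ on the nose. It then remains to compare $iG$ with $1_{\mathcal{S}}$: both are the identity on objects, and $iG(f)$ is obtained from $f$ by the substitution $R^{\dot} \mapsto R$. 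I would define an icon $1_{\mathcal{S}} \Rightarrow iG$ whose component at a 1-cell $f$ is the invertible 2-cell built by inserting one copy of $t$ at each occurrence of $R^{\dot}$ in $f$ (and identities elsewhere). Because $t$ is invertible, this icon has invertible 2-cell components, and since $iG$ and $1_{\mathcal{S}}$ agree on objects, such an icon is precisely a pseudonatural equivalence with identity 1-cell components. Hence $iG \simeq 1_{\mathcal{S}}$, so $G$ is a pseudo-inverse to $i$ and $i$ is a biequivalence; being also a symmetric monoidal functor, it is a symmetric monoidal biequivalence by definition.

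The main obstacle is checking that the 2-cells built from $t$ genuinely assemble into an icon, that is, that the substitution $R^{\dot} \mapsto R$ is natural with respect to every generating 2-cell of $\mathcal{S}$. For the monoidal and positive braiding generators this is immediate, since $t$ is inserted only at the $R^{\dot}$'s that those generators leave untouched; the real work is the naturality squares against the naturality 2-cells of $R^{\dot}$ and against the unit and counit of the original adjoint equivalence $R \dashv_{eq} R^{\dot}$. These are handled by the modification property of $t$ (Lemma \ref{tisamod}) together with the pasting equation defining $t$ in terms of $v$; the very same bookkeeping underlies the transport-of-structure claim used to put the symmetric monoidal structure on $\mathcal{S}_+$ in the first place. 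Everything else reduces to two general facts: that a functor admitting a pseudo-inverse is a biequivalence, and that transporting a coherent structure along an invertible modification preserves its axioms.
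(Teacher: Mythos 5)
Your proposal is correct and takes essentially the same route as the paper: swap the adjoint equivalence to $R_{xy} \dashv_{eq} R_{yx}$ via Lemma \ref{RRadj}, observe that the remaining data and axioms live in $\mathcal{S}_{+}$, construct the substitution functor $\mathcal{S} \rightarrow \mathcal{S}_{+}$ (the paper defines it by hand on generators, which is the same functor your universal-property argument produces), note that its composite with $i$ is the identity on $\mathcal{S}_{+}$, and exhibit the other composite as equivalent to $1_{\mathcal{S}}$ by an invertible icon built from $t$, whose naturality rests exactly on Lemma \ref{tisamod} and the pasting equation defining $t$. The one caveat is your assertion that $i$ is \emph{strict}: it is not, since $i$ sends the chosen pseudo-inverse $R_{yx}$ of $\mathcal{S}_{+}$ to $R_{yx}$ rather than to $R_{xy}^{\dot}$, which is why the paper equips $i$ with a weak symmetric monoidal structure ($\chi$ the identity adjoint equivalence, $U$ a composite of unit isomorphisms) --- but this is all the proposition requires, so the slip is harmless.
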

\begin{proof}
The monoidal structure for $\mathcal{S}_{+}$ is given by the same tensor product, unit, and constraints as that for $\mathcal{S}$.  The braided monoidal structure is also the same except that we define the adjoint equivalence $R_{xy} \dashv_{eq} R_{xy}^{\dot}$ to be given by $R_{xy} \dashv_{eq} R_{yx}$ using Lemma \ref{RRadj}; the axioms then follow from those in $\mathcal{S}$ as $R_{xy}^{\dot}$ does not appear as part of the standard collection of axioms.  We can also keep the same invertible modification $v$, and both the syllepsis axioms (using the version given by McCrudden in \cite{mccrudden}) and the symmetry axiom follow from those in $\mathcal{S}$.  The same arguments show that the inclusion $i$ can be equipped with the structure of a symmetric monoidal functor by defining $\chi$ to be the identity adjoint equivalence and $U$ to be the obvious composite of unit isomorphisms.

To show that $i$ is a symmetric monoidal biequivalence, we construct a pseudo-inverse $T:\mathcal{S} \rightarrow \mathcal{S}_{+}$ which will be a strict monoidal functor.  We define $T$ to be the identity on objects, and the identity on all of the generating 1-cells which are in both $\mathcal{S}$ and $\mathcal{S}_{+}$.  Define $T(R_{xy}^{\dot}) = R_{yx}$.  Similarly on 2-cells, we define $T$ to be the identity on all generating 2-cells which are in both $\mathcal{S}$ and $\mathcal{S}_{+}$.  Define $T(\eta_{xy}) = v_{xy}^{-1}$ and $T(\varepsilon_{xy}) = v_{yx}$.  Requiring that $T$ be strict monoidal then extends it to tensors and composites of all of these cells, thus defining the map on underlying 2-globular sets.

Now it is clear that $Ti=1$, so we need only show that $iT \simeq 1$.  Since these two functors agree on objects, we will show that there is an invertible icon $\alpha:iT \Rightarrow 1$.  On a generating 1-cell $f$ of $\mathcal{S}$, $\alpha_{f}$ is the identity except when $f = R_{xy}^{\dot}$, in which case we define $\alpha_{f}$ to be $t^{-1}$.  We extend this over composition, forcing the icon axioms to hold if the components of $\alpha$ we have given here are natural in $f$.  Note that every component is invertible, so in fact naturality is all we have to check.  We only must check naturality for generating 2-cells, as naturality for composites will follow, and we only have to check naturality if either the source or the target involves $R_{xy}^{\dot}$ as otherwise $iT$ is the identity.  We leave the full verification of these axioms to the reader, indicating in each case how naturality can be demonstrated.
\begin{itemize}
\item Naturality of $\alpha$ with respect to $\varepsilon$ is merely a rewritten version of the diagram defining $t$, so holds automatically.
\item Naturality of $\alpha$ with respect to $\eta$ follows from the triangle identities for $\eta$ and $\varepsilon$ and the definition of $t$ starting from the 2-cell $1_{R_{xy}} * \eta_{xy}$.
\item Naturality of $\alpha$ with respect to any of the 2-cells $a,l,r$ just follows from the naturality of those cells.
\item Naturality of $\alpha$ with respect to the naturality 2-cells for $R$ is immediate, and with respect to the naturality 2-cells for $R^{\dot}$ is that $t$ is a modification.
\end{itemize}
\end{proof}

This proposition completes the task of replacing $\mathcal{S}$ with a symmetric monoidal bicategory which, geometrically, has 1-cells which correspond to positive braids.  This reduces proving Theorem \ref{coherencesmb} to proving the analogous statement for $\mathcal{S}_{+}$.  We focus now on that task.

\noindent \textbf{Definition.}  Let $f$ be a 1-cell in $\mathcal{S}$.  The \textit{underlying permutation} of $f$ is the element $\pi(f) \in \Sigma_{n}$ where $\pi:\mathcal{S} \rightarrow \Sigma$ is the symmetric monoidal functor induced by the universal property.  If $f$ is a 1-cell in $\mathcal{S}_{+}$, then its underlying permutation is $\pi i(f)$ where $i:\mathcal{S}_{+} \hookrightarrow \mathcal{S}$.

\begin{rem}
It is important to note that if $\alpha:f \Rightarrow g$ is a 2-cell in $\mathcal{S}$, then $\pi(f) = \pi(g)$.  This is easy to check, as none of the generating 2-cells in $\mathcal{S}$ change the underlying permutation.  Later, we will prove a converse to this:  if $\pi(f) = \pi(g)$, then there exists a 2-cell $\alpha:f \Rightarrow g$ in $\mathcal{S}$.  The fact that this 2-cell will also be unique is then one expression of coherence for symmetric monoidal bicategories.
\end{rem}

\noindent \textbf{Definition.}  Let $f$ be a 1-cell in $\mathcal{S}$.  Since $\mathcal{S}$ has the same underlying graph (by which we mean 0- and 1-cells, together with source and target information) as the free braided monoidal bicategory on $*$, here denoted $\mathcal{B}$ (see \cite{gurski-braid}), we consider the 1-cell $f$ now in $\mathcal{B}$.  The universal property of $\mathcal{B}$ induces a strict braided monoidal functor $\rho:\mathcal{B} \rightarrow \mb{Br}$ where $\mb{Br}$ is the collection of braid groups, viewed as a locally discrete braided monoidal bicategory.  The \textit{underlying braid} of $f$ is then $\rho(f)$, viewed as an element of a braid group.  If $f$ is now a 1-cell in $\mathcal{S}_{+}$, the underlying braid of $f$ is then the underlying braid of $i(f)$.

\begin{rem}
Note that for any 1-cell in $\mathcal{S}_{+}$, the underlying braid is a positive braid, i.e., can be written as a word in the generators $\sigma_{i}$ without the use of any inverses.  Conversely, given a positive braid $\alpha$ we can construct a 1-cell $f$ in $\mathcal{S}_{+}$ such that the underlying braid of $f$ is $\alpha$.  The full coherence theorem for symmetric monoidal bicategories will give a correspondence between symmetries and 1-cells in $\mathcal{S}_{+}$ (taken up to isomorphism).  The coherence theorem for braided monoidal bicategories already implies a limited version of that result, namely that given any two 1-cells $f,f'$ in $\mathcal{S}_{+}$ with the same underlying braid, there is a unique isomorphism $f \cong f'$ using only the braided structure of $\mathcal{S}_{+}$, i.e., a unique invertible 2-cell constructed without any instances of $v, v^{-1}$.  We will implicitly be using this correspondence at the braided level in our rewriting strategy for coherence.
\end{rem}

\noindent \textbf{Definition.}  Let $\rho$ be a positive braid on $n$ strands.  Then the \textit{starting set} $S( \rho) \subseteq \{ 1, \ldots, n-1 \}$ is the set
\[
\{ i: \rho = \sigma_{i} \tau, \tau \textrm{ a positive braid} \}.
\]
The \textit{finishing set} $F( \rho) \subseteq \{ 1, \ldots, n-1 \}$ is the set
\[
\{ i: \rho = \tau \sigma_{i} , \tau \textrm{ a positive braid} \}.
\]

\noindent \textbf{Definition.}  For a positive braid $\rho$, a factorization $\rho = \tau \omega$ into a product of two positive braids is \textit{left-weighted} if $S(\omega) \subseteq F(\tau)$.

\begin{rem}
If $\rho = \tau \omega$ is a left-weighted factorization, then for any $i \in S(\omega)$, we can write
\[
\rho = \tau' \sigma_{i}^{2} \omega'.
\]
This is the property that we will need later in order to prove that every 1-cell is isomorphic to a minimal one (see the following definition).
\end{rem}

\noindent \textbf{Definition.}  A 1-cell $f$ in $\mathcal{S}_{+}$ is \textit{minimal} if its underlying braid has the property that no two strands cross twice.  We will also refer to a positive braid as minimal if it has this property.

The following lemma can be found in \cite{EM2}.

\begin{lem}
For any positive braid $\rho$, there is a left-weighted factorization $\rho = \tau \omega$ in which $\tau$ is minimal.
\end{lem}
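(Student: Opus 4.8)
The plan is to obtain $\tau$ as a minimal left-divisor of $\rho$ of maximal length. First I would note that such a $\tau$ exists: any minimal positive braid on $n$ strands has length at most $\binom{n}{2}$, since each of the $\binom{n}{2}$ pairs of strands crosses at most once, so the length of a minimal left-divisor is bounded. Among all factorizations $\rho = \tau\omega$ into positive braids with $\tau$ minimal --- of which there is at least one, namely $\tau = 1$ --- I may therefore choose one in which $\tau$ has the greatest possible length. It then remains only to check that this maximal choice is automatically left-weighted, that is, that $S(\omega) \subseteq F(\tau)$.

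To verify this, fix $i \in S(\omega)$ and write $\omega = \sigma_i \omega'$, so that $\rho = \tau\sigma_i\omega'$ exhibits $\tau\sigma_i$ as a left-divisor of $\rho$. By maximality of $\tau$, the braid $\tau\sigma_i$ cannot be minimal. Since $\sigma_i$ crosses only the two strands occupying positions $i$ and $i+1$ at the bottom of $\tau$, the sole new crossing in $\tau\sigma_i$ is between these two strands; for $\tau\sigma_i$ to fail to be minimal these strands must therefore already have crossed within $\tau$. Thus the whole problem reduces to the following purely combinatorial statement about minimal braids: if $\tau$ is minimal and the two strands ending in positions $i$ and $i+1$ have crossed in $\tau$, then $i \in F(\tau)$, i.e. $\tau = \tau'\sigma_i$ for some positive braid $\tau'$.

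I expect this last claim to be the main obstacle, as it is where the braid relations genuinely enter. I would prove it by induction on the length of $\tau$, examining its final generator $\sigma_j$ in a factorization $\tau = \tau_0\sigma_j$. When $j = i$ there is nothing to prove, and when $|i - j| \geq 2$ the generators $\sigma_i$ and $\sigma_j$ commute, the strands in positions $i, i+1$ are undisturbed by the final $\sigma_j$, so they already crossed in $\tau_0$, and the inductive hypothesis applied to $\tau_0$ together with $\sigma_i\sigma_j = \sigma_j\sigma_i$ finishes the case. The delicate cases are $j = i \pm 1$: here the strand leaving position $i$ or $i+1$ is moved by $\sigma_j$, and one must use the braid relation $\sigma_i\sigma_{i+1}\sigma_i = \sigma_{i+1}\sigma_i\sigma_{i+1}$, along with minimality to rule out a repeated crossing, in order to migrate an occurrence of $\sigma_i$ to the right-hand end. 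Equivalently --- and this is perhaps the cleaner route --- one identifies minimal braids with permutation braids and uses the standard description of their finishing sets: for the permutation braid of a permutation $w$, a pair of strands crosses exactly when it forms an inversion of $w$, and $i \in F(\tau)$ precisely when the strands ending at positions $i$ and $i+1$ form such an inversion. Since ``crossed'' and ``inversion'' thus coincide for these two strands, the claim is immediate.

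Finally, I would record that only existence is required here; no uniqueness or canonicity of the factorization is needed for the rewriting argument that follows, so once the above claim is in hand the lemma is complete. In particular, the maximal-length construction sidesteps any appeal to the lattice structure of the positive braid monoid, keeping the argument self-contained.
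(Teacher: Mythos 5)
Your proposal is correct in outline, but note that the paper itself gives no proof of this lemma at all: it is quoted from \cite{EM2}, so the comparison is between your reconstruction and the literature. What you give is essentially the standard Garside-theoretic argument: take $\tau$ to be a minimal left-divisor of $\rho$ of maximal length (legitimate, since word length is an invariant of a positive braid and minimal braids have length at most $\binom{n}{2}$), observe that maximality forces $\tau\sigma_i$ to be non-minimal for each $i \in S(\omega)$, that this non-minimality can only come from the two strands ending at positions $i$, $i+1$ having already crossed in $\tau$, and then invoke the finishing-set characterization of minimal braids. The caveat is that this last characterization is where all the content lives, and your treatment of it is the weakest part: the ``cleaner route'' simply quotes the standard description of finishing sets of permutation braids, which is precisely the Elrifai--Morton result the paper outsources to \cite{EM2}, so it adds no independence; and in the inductive route the cases $j = i \pm 1$ are genuinely delicate --- for $j = i+1$ the strand ending at position $i+1$ of $\tau$ ends at position $i+2$ of $\tau_0$, so the inductive hypothesis (which concerns adjacent ending positions) does not apply to $\tau_0$ directly, and some further device is needed; the cleanest is the bijection between minimal positive braids and permutations (crossings $=$ inversions) together with uniqueness, removing the inversion to obtain $\tau'$ with $\tau'\sigma_i = \tau$. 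With that lemma granted --- at exactly the level of citation the paper itself employs --- your greedy maximal-prefix argument is a valid, essentially self-contained alternative that avoids any appeal to the lattice structure of the positive braid monoid.
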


\begin{prop}\label{minimal}
Every 1-cell $f$ in $\mathcal{S}_{+}$ is isomorphic to a minimal 1-cell $f'$.
\end{prop}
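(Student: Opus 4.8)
The plan is to induct on the number of crossings in the underlying positive braid of $f$, using the syllepsis $v$ to delete a pair of crossings whenever that braid fails to be minimal. Let $\rho$ be the underlying positive braid of $f$ and write $\ell(\rho)$ for its length as a word in the generators $\sigma_i$, that is, its number of crossings (this is well defined on positive braids, since the braid relations preserve word length). If $\rho$ is minimal there is nothing to prove, so I would assume it is not and exhibit a 1-cell $h$ in $\mathcal{S}_{+}$, isomorphic to $f$, whose underlying braid has $\ell(\rho)-2$ crossings; the proposition then follows by applying the inductive hypothesis to $h$ and composing isomorphisms.

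To produce $h$, first apply the lemma from \cite{EM2} to obtain a left-weighted factorization $\rho = \tau\omega$ with $\tau$ minimal. Since $\rho$ is not minimal while $\tau$ is, the factor $\omega$ cannot be trivial, so its starting set $S(\omega)$ is nonempty; choose $i \in S(\omega)$. By the remark following the definition of left-weighted factorization, we may then rewrite $\rho = \tau'\sigma_i^2\omega'$ for suitable positive braids $\tau',\omega'$. The factor $\sigma_i^2$ records a single pair of adjacent strands crossing and immediately re-crossing, which is exactly the configuration the syllepsis is built to contract.

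Now I would use the coherence theorem for braided monoidal bicategories (in the form recorded in the remark following the definition of the underlying braid) to replace $f$ by a conveniently presented 1-cell. That theorem provides a unique isomorphism, built only from the braided structure and hence lying in $\mathcal{S}_{+}$, between $f$ and any parallel 1-cell $g$ with the same underlying braid $\rho$. I would take $g$ to be a composite whose middle factor is $R_{yx}R_{xy}$, whiskered by identities and tensored with identity 1-cells so as to act on the two strands in positions $i, i+1$, with the outer factors realizing $\tau'$ and $\omega'$. Since $R_{yx}R_{xy}$ contains no instance of $R^{\dot}$, the 1-cell $g$ lies in $\mathcal{S}_{+}$. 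Whiskering the component $v_{xy}\colon R_{yx}R_{xy} \Rightarrow 1$ with the outer factors yields an invertible 2-cell $g \Rightarrow h$ in $\mathcal{S}_{+}$, where $h$ realizes $\tau'\omega'$; its underlying braid has $\ell(\rho)-2$ crossings. Composing $f \cong g \Rightarrow h \cong f'$, with $f'$ minimal by induction, completes the argument, and the induction is well founded because $\ell$ is a nonnegative integer strictly decreasing at each step, terminating only when the braid is minimal.

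The main obstacle is the bookkeeping in this inductive step rather than any single hard idea: one must verify that the $\sigma_i^2$ extracted from the left-weighted factorization can genuinely be realized as the composite $R_{yx}R_{xy}$ to which a whiskered copy of $v$ applies, and that every intermediate 1- and 2-cell stays inside $\mathcal{S}_{+}$, using in particular no $R^{\dot}$. Both points follow from the definition of $\mathcal{S}_{+}$ together with braided coherence, which lets us pass freely between parallel 1-cells with equal underlying braids while keeping the connecting 2-cells within the positive part.
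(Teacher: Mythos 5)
Your proof is correct and follows essentially the same route as the paper: both use the left-weighted factorization of \cite{EM2} to isolate a factor $\sigma_i^2$, invoke braided coherence to replace $f$ by a parallel 1-cell in $\mathcal{S}_{+}$ realizing that factor as $R_{yx}R_{xy}$, cancel it with an instance of $v$, and iterate until the underlying braid is minimal. Your version merely makes explicit the induction on crossing number and the nonemptiness of $S(\omega)$, which the paper leaves implicit in its phrase ``this procedure \ldots can be continued.''
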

\begin{proof}
Let $\rho(f)$ be the underlying positive braid of $f$.  Using the left-weighted factorization of $\rho(f)$ into a product of two positive braids, we can write
\[
\rho(f) = b_{1} \sigma_{i}^{2} b_{2}
\]
for some positive braids $b_{1}, b_{2}$ and some value of $i$.  Using only 2-cells from the braided structure of $\mathcal{S}_{+}$, $f$ is isomorphic to some 1-cell $g$ whose underlying braid is precisely $b_{1} \sigma_{i}^{2} b_{2}$, and then we can use an instance of $v$ to remove the $\sigma_{i}^{2}$.  This procedure has reduced the total number of crossings, and can be continued until no two strands cross twice.
\end{proof}

\subsection{Rewriting techniques}

To prove our coherence theorem, we will use rewriting techniques.    We will define a notion of abstract reduction which changes one positive braid into another, and use these to give a kind of normal form for 2-cells in $\mathcal{S}_{+}$.  Throughout this section, we will always use $\alpha$ to denote a positive braid.

\noindent \textbf{Definition.}  The \textit{basic reductions} are given by
\begin{itemize}
\item the reductions of type YB are
\[
\begin{array}{rcl}
\sigma_{i} \sigma_{i+1} \sigma_{i} & \leadsto & \sigma_{i+1} \sigma_{i} \sigma_{i+1}, \\
\sigma_{i+1} \sigma_{i} \sigma_{i+1} & \leadsto & \sigma_{i} \sigma_{i+1} \sigma_{i},
\end{array}
\]
\item the reductions of type C are $\sigma_{i} \sigma_{j} \leadsto \sigma_{j} \sigma_{i}$ for $|i-j| >1$, and
\item the reductions of type V are $\sigma_{i} \sigma_{i} \leadsto 1$.
\end{itemize}
We also have \textit{composite basic reductions}; those of type YB are given by
\[
\begin{array}{rcl}
\sigma_{i} \sigma_{i+1} \sigma_{i}^{n} & \leadsto & \sigma_{i+1}^{n} \sigma_{i} \sigma_{i+1}, \\
\sigma_{i}^{n} \sigma_{i+1} \sigma_{i} & \leadsto & \sigma_{i+1} \sigma_{i} \sigma_{i+1}^{n}, \\
\sigma_{i+1} \sigma_{i} \sigma_{i+1}^{n} & \leadsto & \sigma_{i}^{n} \sigma_{i+1} \sigma_{i},\\
\sigma_{i+1}^{n} \sigma_{i} \sigma_{i+1} & \leadsto & \sigma_{i} \sigma_{i+1} \sigma_{i}^{n},
\end{array}
\]
and those of type C are given by
\[
\begin{array}{rcl}
\sigma_{i}^{n} \sigma_{j} & \leadsto & \sigma_{j} \sigma_{i}^{n}, \\
\sigma_{i} \sigma_{j}^{n} & \leadsto & \sigma_{j}^{n} \sigma_{i}.
\end{array}
\]
In addition, if $\alpha, \alpha'$ are positive braids and we have a basic reduction $\beta \leadsto \widehat{\beta}$, then there is a basic reduction
\[
\alpha \beta \alpha' \leadsto \alpha \widehat{\beta} \alpha'.
\]

\begin{rem}
The basic reductions of type YB are denoted as such because of their relation to the Yang-Baxter equation.  The basic reductions of type C are commutativity relations which are also present at the braided level.  The reductions of type V are part of the symmetric structure, and correspond to the relation $\sigma_{i}^{2}=1$ in the standard presentation of the symmetric groups.
\end{rem}

\noindent \textbf{Definition.}  A \textit{reduction} is a chain of basic reductions
\[
\alpha_{1} \leadsto \alpha_{2} \leadsto \cdots \leadsto \alpha_{n};
\]
we call $\alpha_{1}$ the source and $\alpha_{n}$ the target.

\noindent \textbf{Definition.}  The \textit{realization} of the basic reductions is given below; note that these are 2-cells in $\mathcal{S}_{+}$.
\begin{itemize}
\item The realization of the reduction $\sigma_{i} \sigma_{i+1} \sigma_{i}  \leadsto  \sigma_{i+1} \sigma_{i} \sigma_{i+1}$ of type YB is the isomorphism (tensored with identities on each side, if necessary)
\[
\begin{array}{rcl}
a \circ R1 \circ a^{\dot} \circ 1R \circ a \circ R1 & \stackrel{1*R_{-|--}}{\Longrightarrow} & a \circ R1 \circ a^{\dot} \circ a \circ R \circ a \\
& \cong & a \circ R1 \circ R \circ a \\
& \stackrel{\textrm{naturality}}{\Longrightarrow} & a \circ  R \circ 1R \circ a \\
& \cong & a \circ  R \circ a \circ a^{\dot} \circ 1R \circ a \\
& \stackrel{R_{-|--}^{-1}*1}{\Longrightarrow} & 1R \circ a \circ R1 \circ a^{\dot} \circ 1R \circ a.
\end{array}
\]
\item The realization of the reduction $\sigma_{i+1} \sigma_{i} \sigma_{i+1}  \leadsto  \sigma_{i} \sigma_{i+1} \sigma_{i}$ of type YB is the isomorphism
\[
\begin{array}{rcl}
a^{\dot} \circ 1R \circ a \circ R1 \circ a^{\dot} \circ 1R & \stackrel{1*R_{--|-}}{\Longrightarrow} & a^{\dot} \circ 1R \circ a \circ a^{\dot} \circ R \circ a^{\dot} \\
& \cong & a^{\dot} \circ 1R \circ R \circ a^{\dot} \\
& \stackrel{\textrm{naturality}}{\Longrightarrow} & a^{\dot} \circ R \circ R1 \circ a^{\dot} \\
& \cong & a^{\dot} \circ R \circ a^{\dot} \circ a \circ R1 \circ a^{\dot} \\
& \stackrel{R_{--|-}^{-1}*1}{\Longrightarrow}& R1 \circ a^{\dot} \circ 1R \circ a \circ R1 \circ a^{\dot}.
\end{array}
\]
The realization of the composite basic reductions of type YB are defined in an analogous manner, the only change being that the naturality 2-cells for $R$ are with respect to a composite 1-cell instead of a single instance of $R$.
\item The realization of C is the appropriate instance of the composite functoriality isomorphism for $\otimes$ as a functor
\[
\begin{array}{rcl}
(1 \otimes f) \circ (g \otimes 1) & \cong & (1 \circ g) \otimes (f \circ 1) \\
& \cong & g \otimes f \\
& \cong & (g \circ 1) \otimes (1 \circ f) \\
& \cong & (g \otimes 1) \circ (1 \otimes f)
\end{array}
\]
where $f,g$ are morphisms of the form $1^{\otimes n} \otimes R \otimes 1^{\otimes m}$.
\item The realization of V is $v$.
\end{itemize}
In addition, for any basic reduction of the form $\alpha \beta \alpha' \leadsto \alpha \widehat{\beta} \alpha'$, the realization is defined to be the realization of $\beta \leadsto \widehat{\beta}$ whiskered by the identities on $\alpha, \alpha'$.   The realization of a reduction is given as the 2-cell in $\mathcal{S}_{+}$ obtained from by composing the realizations of each individual basic reduction.

Strictly speaking, there is some ambiguity in the definition above that arises from not picking source and target 0-cells, but we assume once and for all that any necessary choices have been made.  There is then the further issue that such choices may not allow composition of the 2-cells making up the realization of a reduction, but this problem is solved by invoking the coherence theorem for monoidal bicategories and inserting unique 2-cell isomorphisms where necessary.  Taking this into account proves the following lemma which will operate in the background throughout the rest of the argument.

\begin{lem}
\begin{enumerate}
\item Any 2-cell $\Gamma:f \Rightarrow g$ in $\mathcal{S}_{+}$ not involving $v^{-1}$ gives rise to a reduction $\textrm{red}(\Gamma):f \leadsto g$ by assigning to each generating 2-cell in $\Gamma$ the obvious reduction which realizes it, up to constraints arising solely from the monoidal structure.
\item Conversely, any reduction $r:\alpha \leadsto \beta$ between positive braids gives rise to a 2-cell $\textrm{real}(r):\overline{\alpha} \Rightarrow \overline{\beta}$, where $\overline{\alpha}, \overline{\beta}$ are 1-cells with underlying positive braids $\alpha, \beta$, respectively.
\item The equation $\textrm{real} \circ \textrm{red}(\Gamma) = \Gamma$ can be made to hold (once again, for $\Gamma$ not involving instances of $v^{-1}$) if we choose the 1-cells $\overline{f}, \overline{g}$ to be $f, g$, respectively, as we take the same generating 2-cells as appear in $\Gamma$ in the same order and use coherence for monoidal bicategories to take care of any associativity or units.
\item We can make these choices such that $\textrm{red} \circ \textrm{real}(r)=r$ for any reduction $r$.
\end{enumerate}
\end{lem}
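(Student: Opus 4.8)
The plan is to treat this as a bookkeeping result: it asserts that $\textrm{real}$ and $\textrm{red}$ are mutually inverse once we work modulo the constraint $2$-cells of the monoidal structure. I would therefore take $\textrm{real}$ essentially as already defined, define $\textrm{red}$ by induction on the expression of $\Gamma$ as a composite and tensor of generating $2$-cells, and then verify the two round-trip equations by appealing to coherence for plain monoidal bicategories to control all constraint data. For part (2), since a reduction is a chain of basic reductions and the realization of each basic reduction has been specified, I would set $\textrm{real}(r)$ to be the vertical composite of the realizations of its steps. The only issue is composability: the target $1$-cell of one realization need not be syntactically the source of the next, and intervening associativity and unit data must be supplied. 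Because any two parallel $1$-cells built from associativity and unit constraints are uniquely isomorphic and every diagram of such constraint $2$-cells commutes, these fillers exist and are unique, so inserting them makes the composite well defined --- exactly the observation recorded in the paragraph preceding the lemma.

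For part (1) I would write $\Gamma$ as a vertical composite of layers, each a whiskering of a single generating $2$-cell, and assign a reduction to each layer. A layer coming purely from the monoidal structure (the $\pi,\mu,\lambda,\rho$ data, naturality, and unit/associativity constraints) is assigned the empty reduction, since it leaves the underlying braid word unchanged; a layer realizing the $\otimes$-functoriality interchange of two braidings is assigned the corresponding type-C reduction; a layer built from $R_{-|--}$ or $R_{--|-}$ together with its accompanying naturality $2$-cell is assigned the type-YB reduction it realizes; and each whiskered instance of $v$ is assigned the type-V reduction removing the corresponding $\sigma_{i}^{2}$. Concatenating these yields $\textrm{red}(\Gamma)$, and the clause ``up to constraints arising solely from the monoidal structure'' is precisely the statement that the assignment is read off modulo the monoidal-constraint layers, which is legitimate since those layers do not change the underlying positive braid word.

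The two equations then follow by matching essential cells. For (3), $\textrm{real}(\textrm{red}(\Gamma))$ and $\Gamma$ are by construction built from the \emph{same} braided and symmetric generating $2$-cells in the \emph{same} order, differing only in the monoidal-constraint $2$-cells occupying the matching gaps between them; choosing $\overline{f}=f$ and $\overline{g}=g$ aligns the boundaries, and coherence for monoidal bicategories forces the two families of constraint fillers to agree, so the pastings are equal. For (4) it suffices to check $\textrm{red}(\textrm{real}(r))=r$ on a single basic reduction and concatenate: the realization of a basic reduction contains exactly one essential configuration --- the $R_{-|--}$/$R_{--|-}$ block for YB, the $\otimes$-functoriality isomorphism for C, or $v$ for V --- surrounded by monoidal constraints, and $\textrm{red}$ reads back exactly that reduction while discarding the constraints.

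The hard part will be well-definedness and the precise matching of the round trips, and it is concentrated entirely in the treatment of the monoidal-constraint $2$-cells; the crux is coherence for plain monoidal bicategories, which guarantees simultaneously that the fillers inserted by $\textrm{real}$ are unique, that they may be discarded unambiguously by $\textrm{red}$, and that the constraint data on the two sides of (3) coincide. The subtlest bookkeeping point will be ensuring that $\textrm{red}$ parses the naturality-for-$R$ and $\otimes$-functoriality $2$-cells correctly as constituents of YB- and C-blocks rather than as isolated constraints; this reduces again to uniqueness of constraint fillers, together with the standard fact that the positive braid monoid is presented by the braid and commutation relations, so that YB and C reductions are exactly the braid-word rewrites preserving the underlying positive braid while V is the only step that alters it.
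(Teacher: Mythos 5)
Your proposal is correct and takes essentially the same approach as the paper: the paper offers no separate proof of this lemma at all, deriving it entirely from the preceding paragraph's observation that one fixes choices of source/target cells once and for all and resolves composability failures by inserting the unique constraint isomorphisms supplied by coherence for monoidal bicategories --- precisely the crux you identify. Your more detailed parsing of $\textrm{red}$ into layers and the cell-by-cell matching for the round-trip equations simply makes explicit what the paper leaves implicit as bookkeeping.
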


\noindent \textbf{Definition.}  Two reductions are equal if they have the same realizations in $\mathcal{S}_{+}$.

\noindent \textbf{Definition.}  A reduction $\alpha_{1} \leadsto \cdots \leadsto \alpha_{n}$ is \textit{complete} if $\alpha_{n}$ is a minimal positive braid.

\begin{lem}\label{l=0}
If $\alpha, \beta$ are both minimal positive braids with the same underlying permutation, then there is a unique reduction $\alpha \leadsto \cdots \leadsto \beta$.
\end{lem}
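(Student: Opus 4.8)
The plan is to prove existence and uniqueness separately, reducing the first to the combinatorics of the positive braid monoid and the second to the coherence theorem for braided monoidal bicategories stated above. The key structural observation I would record at the outset is that a minimal positive braid is completely determined by its underlying permutation. Indeed, since $\alpha$ and $\beta$ are minimal, no two strands cross twice, so a pair of strands crosses exactly once precisely when it is an inversion of the underlying permutation; thus minimal positive braids are exactly the permutation braids, and the underlying-permutation map restricts to a bijection between them and $\Sigma_{n}$. In particular, because $\alpha$ and $\beta$ have the same underlying permutation they coincide as elements of the positive braid monoid, and hence the $1$-cells $\overline{\alpha}, \overline{\beta}$ of $\mathcal{S}_{+}$ have the same underlying braid.

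For existence, I would use that $\alpha$ and $\beta$, being equal in the positive braid monoid, are connected by the defining relations of that monoid, namely the braid relations. These relations are exactly the reductions of type YB and C (each given in both directions, with C self-inverse since $|i-j|>1$ is symmetric), and they are all length-preserving: both sides of a YB, composite YB, or C reduction have the same number of generators. Since $\alpha$ and $\beta$ are reduced words of equal length, a sequence of such relations transforms one into the other while staying at that length, producing a reduction $\alpha \leadsto \cdots \leadsto \beta$ that uses no reduction of type V. (All intermediate words are automatically reduced words for the same permutation, hence minimal.)

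For uniqueness, let $r$ be any reduction from $\alpha$ to $\beta$. The only basic reduction that changes the number of crossings is V, which strictly decreases it, while YB and C preserve it and no basic reduction increases it; as the source and target have equal length, $r$ can contain no instance of V. Consequently the realization $\textrm{real}(r)$ is a $2$-cell of $\mathcal{S}_{+}$ built entirely from the braided structure, involving no instance of $v$ or $v^{-1}$. But $\overline{\alpha}$ and $\overline{\beta}$ have the same underlying braid, so by the coherence theorem for braided monoidal bicategories, in the form recorded in the remark preceding Proposition \ref{minimal} that parallel $1$-cells of $\mathcal{S}_{+}$ with equal underlying braid admit a unique braided isomorphism, every such $2$-cell is the same. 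Since two reductions are equal exactly when their realizations agree, the reduction $\alpha \leadsto \cdots \leadsto \beta$ is unique.

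I expect the main obstacle to be the uniqueness half, and specifically the need to rule out any use of V so that braided coherence applies verbatim; the length bookkeeping above is what makes this clean, after which the appeal to braided coherence does the real work. The existence half is essentially the solvability of the word problem in the positive braid monoid by length-preserving moves, which is standard once the bijection between minimal positive braids and permutations is in hand.
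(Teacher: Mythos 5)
Your proof is correct and takes essentially the same route as the paper's: both rest on the fact (cited there from \cite{EM2}) that minimal positive braids with the same underlying permutation coincide as braids, both rule out any use of type V reductions, and both then invoke braided coherence --- in exactly the form of the remark you cite --- to get uniqueness of the realizing 2-cell. The only differences are cosmetic: the paper excludes V by observing that minimality is preserved under YB and C moves (so no $\sigma_i^2$ ever appears), where you use a length count, and it extracts existence from braided coherence together with the reduction/realization correspondence rather than from the word problem in the positive braid monoid.
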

\begin{proof}
By \cite{EM2} minimal positive braids with the same underlying permutation are necessarily equal elements of the braid group, hence they give parallel 1-cells in the free braided monoidal bicategory on one object which are uniquely isomorphic by \cite{gurski-braid}.  By minimality, any reduction of $\alpha$ only uses basic reductions of type YB or C, hence realizes to a 2-cell in the free braided monoidal bicategory.  Thus the coherence theorem in the braided case gives existence and uniqueness.
\end{proof}

Our key result is the following.  We devote the next section to its proof.

\begin{thm}\label{completered}
For a positive braid $\alpha$, any two complete reductions with the same target are equal.
\end{thm}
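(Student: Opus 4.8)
The plan is to induct on the number $\ell$ of type-V basic reductions appearing in a complete reduction. This number is an invariant of the source and target: a type-V reduction $\sigma_i \sigma_i \leadsto 1$ lowers the crossing number of the underlying positive braid by two, while the reductions of type YB and C preserve it, so any complete reduction with source $\alpha$ must contain exactly $\ell = (c(\alpha) - c(\beta))/2$ instances of V, where $c(-)$ denotes the number of crossings and $\beta$ is the (unique) minimal braid on $\pi(\alpha)$. The base case $\ell = 0$ reduces to Lemma \ref{l=0}: a complete reduction with no V-steps stays inside the braided world, its source is forced to be minimal, and coherence for braided monoidal bicategories (\cite{gurski-braid}) supplies a unique realization.

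For the inductive step I would reduce the statement to a \emph{coherent local confluence} property: whenever a positive braid admits two competing basic reductions, the two resulting branches can be completed to a common target so that the two composite realizations agree as 2-cells in $\mathcal{S}_{+}$. Because YB and C are invertible and do not change the crossing number, this rewriting system is not terminating in the naive sense; the role of the induction on $\ell$ is precisely to supply a well-founded measure, so that local confluence may be assembled into global uniqueness stratified by the number of V-steps. Concretely, given two complete reductions from $\alpha$, I would use coherent confluence to align their first genuinely symmetric (type-V) step, arranging without altering either realization that both cancel the same double crossing at the same intermediate braid, and then peel off this common step. The braided prefixes leading up to it are then parallel braided reductions, hence equal by braided coherence, while the two tails are complete reductions with $\ell - 1$ V-steps from a common source, so the inductive hypothesis finishes the argument.

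The technical heart is the critical-pair analysis establishing coherent local confluence, and the critical pairs fall into three families. Overlaps of two braided reductions (YB with YB, YB with C, C with C) involve no instance of $v$, so both branches realize to 2-cells in the free braided monoidal bicategory and agree automatically by the braided coherence theorem. Overlaps of a V-reduction with a braided reduction, where a pattern $\sigma_i \sigma_i$ interacts with an adjacent Yang--Baxter or commutation move, are resolved by the syllepsis axioms together with the naturality of $v$ as a modification; here it is essential that the realizations of the YB reductions are built from exactly the 2-cells $R_{-|--}$ and $R_{--|-}$ that appear in the syllepsis axioms, and that in $\mathcal{S}_{+}$ we have at our disposal the $R^{\dot}$-free form of those axioms (that of McCrudden \cite{mccrudden}). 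Overlaps of two V-reductions, typified by $\sigma_i^3$ admitting cancellation of either the leading or the trailing $\sigma_i^2$, are resolved using the symmetry axiom relating $v_{ab}$ and $v_{ba}$ together with the modification axiom for $v$.

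The main obstacle I anticipate is exactly this last round of bookkeeping: verifying that each V-involving critical pair closes up on the nose as an equality of pasting diagrams in $\mathcal{S}_{+}$, and confirming that the realignment of the first V-step never changes a realization. Both amount to genuine, if routine, pasting computations with the constraint cells $\pi, \mu, \lambda, \rho$, the braided cells $R_{-|--}, R_{--|-}$, and the symmetric cell $v$, and both depend delicately on having recorded the realizations of the composite basic reductions correctly. The braided critical pairs cost nothing beyond invoking coherence in the braided case, so the entire weight of the theorem rests on the syllepsis and symmetry axioms discharging the few overlap diagrams that actually involve $v$.
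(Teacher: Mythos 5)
Your proposal is correct and follows essentially the same route as the paper: induction on the number of type-V steps with Lemma \ref{l=0} as the base case, and an inductive step that coherently resolves the divergence between the first type-V steps of two complete reductions, discharging the braided overlaps by braided coherence, the shared-crossing ($\sigma_i^3$-type) case by the symmetry axiom, and the disjoint case by functoriality of $\otimes$ and composition. What you call critical-pair analysis and alignment is implemented in the paper via marking functions that track the two cancelled crossing-pairs through the intervening braided moves, its three lemmas about the ``generic situation'' corresponding exactly to your overlap cases (pairs sharing two, one, or no crossings).
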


\subsection{Marked braids}

In order to prove Theorem \ref{completered}, we must introduce some notation and terminology.

\noindent \textbf{Definition.}  Let $A$ be a set.  A \textit{marking function} with labels in $A$ is a function $m: A \coprod A \rightarrow \mathbb{N}$, or equivalently a function $A \rightarrow \mathbb{N}^{2}$.

\begin{rem}
In the proofs below, we are really only interested in the case when $A$ has one or two elements.  This set is just used for bookkeeping purposes, and has no intrinsic properties.
\end{rem}

Let $\alpha$ be a 1-cell in $\mathcal{S}_{+}$.  By construction, this 1-cell has a length $n$ which is the number of crossings that appear in the geometric braid representing $\alpha$, or equally the number of generating 1-cells of the form $R_{x,y}$ that appear in $\alpha$ so long as we consider 1-cells in which $R$'s always braid a single object past another single object (no $R$ of the form $R_{x,y\otimes z}$).  We can always assume that $\alpha$ is a 1-cell as just described (and we shall for the remainder of this discussion unless otherwise noted).  A marking function $m: A \rightarrow \mathbb{N}^{2}$ whose image consists of pairs with each coordinate in $\{1, \ldots, n \}$ can be used to mark the braid $\alpha$ by marking the $i$th crossing and $j$th crossing in the geometric braid representing $\alpha$ with $a$ if $m(a)=(i,j)$.  Conversely, given a geometric braid with pairs of crossings marked with the elements of $A$, we can construct a marking function by sending the element $a$ to the pair of
natural numbers $(i,j)$ where the $i$th and $j$th crossings are labeled by $a$.

Finally, we can use reductions of type YB or type C to transfer markings from one braid to another.  For the definition below, we use the notation $\sigma_{i}^{a}$ to indicate that the crossing $\sigma_{i}$ is marked by the element $a$.

\noindent \textbf{Definition.}  Let $\alpha$ be a 1-cell in $\mathcal{S}_{+}$ equipped with a marking function $m:A \coprod A \rightarrow \mathbb{N}$.  Then the \textit{transferred marking} under a basic reduction $\alpha \leadsto \beta$ of type YB or C is given below; we have not listed the composite basic reductions, as they are derived from these in the obvious fashion.
\[
\begin{array}{rcl}
\sigma_{i}^{a} \sigma_{i+1}^{b} \sigma_{i}^{c} & \leadsto & \sigma_{i+1}^{c} \sigma_{i}^{b} \sigma_{i+1}^{a} \\
\sigma_{i+1}^{a} \sigma_{i}^{b} \sigma_{i+1}^{c} & \leadsto & \sigma_{i}^{c} \sigma_{i+1}^{b} \sigma_{i}^{a} \\
\sigma_{i}^{a} \sigma_{j}^{b} & \leadsto & \sigma_{j}^{b} \sigma_{i}^{a}
\end{array}
\]
Note that we allow any of the labels to be omitted in which case both crossings with that label are unmarked.

Consider the following collection of reductions, in which each reduction $f_{j}$ is either of type C or type YB and the reductions $v,v'$ are of type V.
\[
\xy
{\ar@{~>}_{v} (0,0)*+{\alpha_{1}}; (0,-15)*+{\beta} };
{\ar@{~>}^{f_{1}} (0,0)*+{\alpha_{1}}; (25,0)*+{\alpha_{2}} };
{\ar@{~>}^{f_{2}} (25,0)*+{\alpha_{2}}; (50,0)*+{\cdots} };
{\ar@{~>}^{f_{k}} (50,0)*+{\cdots}; (75,0)*+{\alpha_{k+1}}};
{\ar@{~>}^{v'} (75,0)*+{\alpha_{k+1}}; (100,0)*+{\gamma} }
\endxy
\]
We can mark $\alpha_{1}$ using the set $\{x \}$ so that the two crossings which are removed by $v$ are marked.  Similarly, we can mark $\alpha_{k+1}$ using the set $\{ y \}$ so that the two crossings removed by $v'$ are marked.  Now for each reduction $f_{j}$, there is the inverse reduction $f_{j}^{-}:\alpha_{j+1} \leadsto \alpha_{j}$ which is of type YB or C if $f_{j}$ is.  Using the reductions
\[
\xy
{\ar@{~>}^{f_{k}^{-}} (0,0)*+{\alpha_{k+1}}; (25,0)*+{\cdots} };
{\ar@{~>}^{f_{1}^{-}} (25,0)*+{\cdots}; (50,0)*+{\alpha_{1}} }
\endxy
\]
we can transfer the marking of $\alpha_{k+1}$ using $\{ y \}$ to a marking of $\alpha_{1}$ using $\{ y \}$.  Thus the crossings marked with $y$'s are those which will be removed by $v'$, after the application of the $f_{j}$.  Taking the disjoint union of these two markings, we get a composite marking with labels in $\{x,y\}$ on $\alpha_{1}$.  We call this the \textit{canonical marking} $M$ of $\alpha_{1}$, and we refer to this collection of reductions and its canonical marking as the \textit{generic situation}.

We now prove three lemmas which constitute the technical bulk of the proof of Theorem \ref{completered}.  The strategy for each lemma is relatively simple.  In each case, we are given reductions with the same source as in the generic situation.  The goal is to find reductions $\beta \leadsto \sigma, \gamma \leadsto \sigma$ such that the two reductions $\alpha_{1} \leadsto \sigma$ are equal.  Since $\alpha_{1}$ is marked using the set $\{ x, y \}$ and, by construction, the two crossings marked with $x$ (resp., $y$) are distinct, there are three possible cases:  both of the crossings marked with $x$ are also the crossings marked with $y$, exactly one crossing marked $x$ is also marked $y$, or no crossing is marked with both $x$ and $y$.  Each lemma examines one of these three cases.

In each lemma, the following geometric idea is translated into algebra.  If $\alpha = \alpha_{1} \sigma_{i}^{2} \alpha_{2}$ is a positive braid, consider the geometric braid representing it and mark both of the crossings given by the $\sigma_{i}$'s.  Now assume we have another positive braid $\beta = \beta_{1} \sigma_{j}^{2} \beta_{2}$, and that $\alpha = \beta$; moreover, assume that the two crossings marked in $\alpha$ are the same crossings given by the $\sigma_{j}$'s in $\beta$.  Since $\alpha = \beta$, we can find a homotopy between the two geometric braids representing these positive braids, and we can choose this homotopy so that, given $\varepsilon >0$, the distance between the marked crossing (by distance, we mean the difference in height if the braids are seen as embedded in $\mathbb{R}^{3}$ beginning in the plane $z=1$ and ending in the plane $z=0$) is, at each time, less than $\varepsilon$.  In other words, the homotopy can be chosen to move the two marked crossings together as a pair.

\begin{lem}
Assume we are in the generic situation.  If $M$ has the property that the pair of crossings marked $x$ are the same as the pair of crossing marked $y$, then there are reductions $\beta \leadsto \sigma, \gamma \leadsto \sigma$ such that the two reductions $\alpha_{1} \leadsto \sigma$ are equal.
\end{lem}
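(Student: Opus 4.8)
The plan is to take $\sigma = \gamma$, so that one of the two reductions to $\sigma$ is trivial and the whole problem becomes that of producing a reduction $\psi \colon \beta \leadsto \gamma$ for which the two composites $\alpha_1 \leadsto \gamma$ agree. Under the hypothesis the crossings marked $x$ coincide with those marked $y$, so $M$ singles out a single pair of crossings, deleted by $v$ in $\alpha_1$ and, after transport along $f_1 \cdots f_k$, by $v'$ in $\alpha_{k+1}$. Let $\beta$ and $\gamma$ be the positive braids obtained from $\alpha_1$ and $\alpha_{k+1}$ by deleting this pair; since deleting a factor $\sigma_m^2$ leaves the underlying permutation unchanged and each $f_j$ preserves it, $\beta$ and $\gamma$ share both their underlying permutation and their underlying braid. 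Writing $\phi$ for the realization of $f_1, \ldots, f_k$, the coherence theorem for braided monoidal bicategories tells us that $\phi$ is the unique braided $2$-cell $\alpha_1 \Rightarrow \alpha_{k+1}$, so I may freely replace the given chain by any reduction of types C and YB with the same source and target. The two reductions $\alpha_1 \leadsto \gamma$ are then the realizations $\psi \circ v$ and $v' \circ \phi$, and the lemma amounts to the equality $v' \circ \phi = \psi \circ v$ in $\mathcal{S}_+$.

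The essential bookkeeping is that the two marked crossings are two crossings of the \emph{same} pair of strands. A direct inspection of the transferred--marking rules shows that no single basic reduction of type C or YB can involve both of them: the two index-$i$ crossings of a YB triple $\sigma_i \sigma_{i+1} \sigma_i$ cross different pairs of strands, while the two crossings of a type C reduction cross disjoint pairs. Consequently each $f_j$ meets the marked pair in at most one crossing, which is the algebraic counterpart of the geometric statement in the preamble that the homotopy may be chosen to carry the two marked crossings along together.

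I would then establish $v' \circ \phi = \psi \circ v$ by sliding the single instance of $v$ leftward through $\phi$, one basic reduction at a time and inducting on $k$; the reduction $\psi$ is produced as a by-product of this sliding and is braided by construction. When $f_j$ misses the marked pair, its realization acts in tensor factors disjoint from the two $R$'s deleted by $v$, and the two cells commute because $v$ is an invertible modification; this step merely transports the basic reduction down to $\beta$. When $f_j$ is a YB reduction that moves one of the two crossings being cancelled, its realization contains an instance of $R_{-|--}$ or $R_{--|-}$ applied to one of the $R$'s that $v$ deletes, and commuting $v$ past it is exactly what the syllepsis axioms govern, since those axioms relate the mate $\widehat{v}$ to precisely these braiding $2$-cells. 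Composing the slides over all $k$ reductions yields the desired equality together with the reduction $\psi \colon \beta \leadsto \gamma$.

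The main obstacle I anticipate is this last interaction. The realization of a YB reduction is not a single generator but a composite of $R_{-|--}^{\pm 1}$ or $R_{--|-}^{\pm 1}$, naturality $2$-cells for $R$, and associativity constraints; the work lies in rewriting the passage of $v$ through such a composite so that it matches a syllepsis axiom exactly, keeping careful track of which cells are mates, of the orientation of each braiding, and of the constraint $2$-cells supplied by coherence for monoidal bicategories. Once each such passage has been identified with an instance of the syllepsis axioms and the remaining steps absorbed by braided coherence, the equality $v' \circ \phi = \psi \circ v$ follows, and with $\sigma = \gamma$ the lemma is proved.
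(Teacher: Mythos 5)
Your global strategy (take $\sigma=\gamma$, so the task reduces to producing a braided reduction $\psi\colon\beta\leadsto\gamma$ with $\psi\circ v = v'\circ\phi$, obtained by pushing $v$ through the chain one step at a time) is the same as the paper's. But the mechanism you propose for the crucial step has a genuine gap. The paper never slides $v$ past the original reductions $f_{j}$. Instead it first \emph{replaces} the chain $f_{1},\dots,f_{k}$ by a new chain $g_{1},\dots,g_{k}$ chosen so that the marked pair is never separated: each $g_{j}$ either leaves both marked crossings alone, or moves the adjacent pair $\sigma_{i}^{2}$ as a single block --- this is exactly what the \emph{composite} basic reductions of type YB and C are for --- and any $f_{j}$ that moves only one marked crossing is omitted. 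Coherence for braided monoidal bicategories guarantees that the $g$-chain has the same realization as the $f$-chain. Because the pair stays adjacent at every stage, every intermediate braid admits a type V reduction removing it, and commuting $v$ past each $g_{j}$ is a pure \emph{naturality} statement; no syllepsis axiom enters anywhere in this lemma.

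Your version keeps the original $f_{j}$'s and tries to commute $v$ past a YB reduction that moves only one of the two crossings being cancelled, appealing to the syllepsis axioms. This is where the argument breaks down: after such an $f_{j}$ the two marked crossings are no longer adjacent (and no longer even of the same index), so the intermediate braid admits no reduction of type V removing them, and the inductive statement ``$v$ slides past $f_{j}$'' is not expressible as an equality of reductions --- the naturality square you want to fill has no vertical edge. To make sense of it you would have to abandon the reduction calculus and argue with general 2-cells of $\mathcal{S}_{+}$, matching mates $\widehat{v}$ against carefully arranged instances of $R_{-|--}$ and $R_{--|-}$; you flag this yourself as ``the main obstacle'' but do not carry it out, and that is precisely the missing content. (A smaller unproved assertion: you take for granted that $\beta$ and $\gamma$ have the same underlying braid, whereas in the paper this falls out of the construction of the bottom chain of reductions rather than being assumed.) The repair is the paper's replacement step: move the cancelling pair as a unit via composite basic reductions, identify the two chains by braided coherence, and then finish with naturality alone.
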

\begin{proof}
We can change the sequence of reductions $f_{j}$ into a new sequence $g_{j}$ by choosing one of the two crossing marked with an $x$ and then using the following algorithm; once again, we assume that each $f_{i}$ is a basic reduction which is not a composite basic reduction.  If $f_{1}$ is a reduction that leaves the two marked crossings alone, then $g_{1}=f_{1}$.  If it moves the chosen marked crossing, then replace $f_{1}$ with a composite basic reduction of the same type that moves both marked crossings; note that the two marked crossings are adjacent, since they are in the source of $v$, and will continue to be adjacent after applying this composite basic reduction.  If $f_{1}$ moves the unchosen marked crossing, omit it.  Then for each $f_{j}$, we either replace it with a reduction moving the pair of marked crossings, keep the same reduction if it does not alter either marked crossing (by same, we mean a reduction of the same type moving the same crossings geometrically), or omit it if it moves the
unchosen marked crossing.  By coherence for braided monoidal bicategories, the sequence of reductions given by the $g_{j}$ is the same as that given by the $f_{j}$.  We also have that there are reductions $v_{2}:\alpha_{2} \leadsto \beta_{2}, h_{1}:\beta \leadsto \beta_{2}$ such that the two composite reductions $\alpha_{1} \leadsto \beta_{2}$ are equal using the naturality of $g_{1}$; continuing to use naturality finishes the result.
\end{proof}

\noindent \textbf{Definition.}  Let $m: A \coprod A \rightarrow \mathbb{N}$ be a marking function.  Then the \textit{distance function} associated with $m$ is the function $d:A \rightarrow \mathbb{N}$ given by
\[
d(a) = |mi_{1}(a) - mi_{2}(a)|,
\]
where $i_{1}, i_{2}$ are the two inclusions of $A$ into $A \coprod A$.

\begin{rem}
The proof above can be interpreted as saying that the reductions $f_{j}$ can be replaced by reductions $g_{j}$ such that the target of $g_{j}$ inherits a marking with the property that its distance function is constant at the value $1$.  This will be useful for the next two lemmas.
\end{rem}

\begin{lem}
Assume we are in the generic situation.  If $M$ has the property that one of the crossings marked $x$ is the same as one of the crossing marked $y$, then there are reductions $\beta \leadsto \sigma, \gamma \leadsto \sigma$ such that the two reductions $\alpha_{1} \leadsto \sigma$ are equal.
\end{lem}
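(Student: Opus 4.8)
The plan is to run the same argument as in the previous lemma, but with the local analysis now governed by a triangle identity rather than by a single naturality move. First I would record the geometric constraint imposed by the hypothesis. Writing $P,Q$ for the two crossings marked $x$ and $Q,S$ for the two marked $y$, with $Q$ the shared crossing, the three crossings $P,Q,S$ are distinct. Since $P$ and $Q$ are the crossings removed by $v$, they cross the same pair of strands, say $u$ and $w$; since $Q$ and $S$ are removed by $v'$, they cross the pair of strands meeting at $Q$, which is again $u$ and $w$. As reductions of type YB and C never change which strands meet at a crossing, all three of $P,Q,S$ are crossings of $u$ and $w$, and because $Q$ cancels against each of $P$ and $S$ it must be the middle one of the three in the order in which $u$ and $w$ cross. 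Locally, then, these three crossings form the word $R_{uw}R_{wu}R_{uw}$, in which $u$ and $w$ cross exactly three times.

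Next I would reduce to this local situation. Exactly as in the previous lemma, I would use the coherence theorem for braided monoidal bicategories \cite{gurski-braid} to replace the reductions $f_{j}$ by an equivalent sequence $g_{j}$ that slides $S$ next to the adjacent pair $P,Q$ without separating $P$ from $Q$, producing a braided reduction of $\alpha_{1}$ to a braid in which $P,Q,S$ appear as a consecutive block $R_{uw}R_{wu}R_{uw}$. Braided coherence guarantees that the realization of this reduction does not depend on how the intervening crossings are slid past, and, since $v$ is a modification, the naturality squares for the $g_{j}$ let me transport both applications of the type V reduction down to this block. It therefore suffices to compare the two reductions on the block $R_{uw}R_{wu}R_{uw}$, with the rest handled by unique braided $2$-cells and naturality as before.

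The crux is the comparison on the block. The first reduction applies $v$ to the pair $P,Q$, giving $1_{R_{uw}} * v_{uw}\colon R_{uw}R_{wu}R_{uw} \Rightarrow R_{uw}$, while the second applies $v$ to the pair $Q,S$, giving $v_{wu} * 1_{R_{uw}}\colon R_{uw}R_{wu}R_{uw} \Rightarrow R_{uw}$; both land on the single surviving crossing $R_{uw}$, which I would take as the common target $\sigma$. These two $2$-cells are equal: by Lemma \ref{RRadj}, $(R_{uw}, R_{wu})$ is an adjoint equivalence with unit $v_{uw}^{-1}$ and counit $v_{wu}$, and the triangle identity
\[
(v_{wu} * 1_{R_{uw}}) \circ (1_{R_{uw}} * v_{uw}^{-1}) = 1_{R_{uw}}
\]
rearranges, using $1_{R_{uw}} * v_{uw}^{-1} = (1_{R_{uw}} * v_{uw})^{-1}$, into precisely $v_{wu} * 1_{R_{uw}} = 1_{R_{uw}} * v_{uw}$. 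Propagating this equality back through the $g_{j}$ by naturality then shows that the two reductions $\alpha_{1} \leadsto \sigma$ agree.

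I expect the main obstacle to be the bookkeeping in the second step rather than the algebra in the third: one must be sure that $S$ can be slid adjacent to $Q$ while $P$ and $Q$ remain adjacent, so that both the $v$ of the first reduction and the $v'$ of the second act on the same consecutive block, and that the braided $2$-cells relating the original $f_{j}$ to the rearranged $g_{j}$ really are forced to agree by braided coherence and commute past $v$ and $v'$ by naturality. Once the problem is localized to $R_{uw}R_{wu}R_{uw}$, the identification of the two reductions is the clean triangle-identity computation above.
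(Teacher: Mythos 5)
Your proof is correct and takes essentially the same route as the paper: bring the three crossings $P,Q,S$ into a consecutive block of the same two strands (keeping the relevant marked pairs adjacent, via the previous lemma's algorithm), identify the two cancellations on that block, and propagate the identification back through naturality of $v$ and braided coherence. The only difference is cosmetic: where you invoke the triangle identity for the adjoint equivalence of Lemma \ref{RRadj}, the paper cites the symmetry axiom directly, and these coincide since Lemma \ref{RRadj} is itself an immediate consequence of the symmetry axiom.
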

\begin{proof}
Proceed as in the proof of the previous lemma, with the change that the crossing marked with both $x$ and $y$ is the chosen one.  Alter, retain, or omit reductions using the algorithm based on $x$ markings, and continue until the distance function for the marking has both $d(x)=1$ and $d(y)=1$; finish using the same algorithm, but now using the $y$ markings, once again with the chosen crossing being the one marked with both $x$ and $y$.  This produces the reductions below, where the $g_{i}$ preserve the property that $d(x)=1$ and the $h_{j}$ preserve the property that $d(y)=1$.
\[
\xy
{\ar@{~>}_{v} (0,0)*+{\alpha_{1}}; (0,-15)*+{\beta} };
{\ar@{~>}^{g_{1}} (0,0)*+{\alpha_{1}}; (20,0)*+{\cdots} };
{\ar@{~>}^{g_{m}} (20,0)*+{\cdots}; (40,0)*+{\alpha_{m+1}} };
{\ar@{~>}^{h_{1}} (40,0)*+{\alpha_{m+1}}; (60,0)*+{\cdots} };
{\ar@{~>}^{h_{n}} (60,0)*+{\cdots}; (80,0)*+{\alpha_{m+n+1}} };
{\ar@{~>}^{v'} (80,0)*+{\alpha_{m+n+1}}; (100,0)*+{\gamma} }
\endxy
\]
As in the previous lemma, we can use naturality to produce reductions $G:\beta \leadsto \beta_{2}, v_{x}:\alpha_{m+1} \leadsto \beta_{2}$ such that the two composite reductions $\alpha_{1} \leadsto \beta_{2}$ are equal; the reduction $v_{x}$ can be taken to be a single reduction of type V which removes the crossings marked $x$.  But by the symmetry axiom, this is equal to a single reduction $v_{y}$ of type V which removes the two crossings marked $y$ since $\alpha_{m+1}$, locally around the marked crossings, is given by the picture below (perhaps with $x$'s and $y$'s switched).
\[
\xy
(0,0)*{}; (20,-15)*{} **\dir{-};
(20,0)*{}; (0,-15)*{} **\dir{-};
(0,-15)*{}; (20,-30)*{} **\dir{-};
(20,-15)*{}; (0,-30)*{} **\dir{-};
(0,-30)*{}; (20,-45)*{} **\dir{-};
(20,-30)*{}; (0,-45)*{} **\dir{-};
(13,-7.5)*{x}; (13,-22.5)*{x}; (7,-22.5)*{y}; (7,-37.5)*{y}
\endxy
\]
Then the corresponding naturality argument using $v_{y}$ allows us to complete the proof.
\end{proof}

\begin{lem}
Assume we are in the generic situation.  If $M$ has the property that the four marked crossings are all distinct, then there are reductions $\beta \leadsto \sigma, \gamma \leadsto \sigma$ such that the two reductions $\alpha_{1} \leadsto \sigma$ are equal.
\end{lem}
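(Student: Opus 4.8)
The plan is to reduce this final case to an application of the interchange law, exactly as the geometric intuition preceding the lemmas suggests: when the four marked crossings are all distinct, the two type-V reductions act on disjoint portions of the braid and can therefore be applied in either order. First I would mimic the algorithm of the two preceding lemmas. Choosing one of the two crossings marked $x$, I replace the sequence $f_1, \ldots, f_k$ by a new sequence $g_1, \ldots, g_m$ which drags the chosen $x$-crossing together with its partner --- using a composite basic reduction whenever an $f_j$ would move only the chosen crossing --- until the distance function satisfies $d(x)=1$. By coherence for braided monoidal bicategories this new sequence realizes the same 2-cell as the original, and now the two $x$-marked crossings form a single block $\sigma_i^2$.

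Next I would continue with reductions $h_1, \ldots, h_n$ that bring the two $y$-marked crossings into adjacency, so that $d(y)=1$ as well. The point where this case genuinely differs from the previous one is that the $x$-block must be kept intact throughout: whenever an $h_j$ would move a $y$-crossing past one of the two $x$-crossings, I instead use a composite basic reduction of the same type (the $\sigma_i^n$-versions with $n=2$) to move the $y$-crossing past the entire block $\sigma_i^2$ at once. Because all four crossings are distinct, this is always possible, and the $h_j$ then preserve $d(x)=1$. The result is a positive braid $\alpha_{m+n+1}$ in which the $x$-marked crossings appear as a contiguous block $\sigma_i^2$ and the $y$-marked crossings as a disjoint contiguous block $\sigma_j^2$.

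At this stage the two type-V reductions are realized by instances of $v$ whiskered onto disjoint sub-1-cells of the composite 1-cell $\alpha_{m+n+1}$. Writing $v_x$ for the reduction removing the $x$-block and $v_y$ for the one removing the $y$-block, the interchange law for horizontal composition in a bicategory gives that the two composites --- $v_y$ after $v_x$, and $v_x$ after $v_y$ --- are equal 2-cells $\alpha_{m+n+1} \Rightarrow \sigma$, where $\sigma$ is the braid with both blocks deleted. I then define $\beta \leadsto \sigma$ to bring the $y$-pair together (via the surviving images of the $g_i, h_j$) and apply $v_y$, and $\gamma \leadsto \sigma$ to be the analogous reduction removing the $x$-block. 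Tracing the canonical marking back through the $g_i, h_j$ (which are isomorphisms and natural in the relevant 1-cells) exactly as in the two previous lemmas, the two composite reductions $\alpha_1 \leadsto \sigma$ differ, after transport to $\alpha_{m+n+1}$, only by the order in which $v_x$ and $v_y$ are applied, so they are equal.

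The hard part will be the bookkeeping of the second paragraph: verifying that the reductions $h_j$ can always be chosen to preserve $d(x)=1$, i.e. that a $y$-crossing can be transported past the intact $x$-block using only composite basic reductions of types YB and C. This rests on the fact, guaranteed by the definition of the marking, that each marked pair really does consist of two crossings of the same two strands and hence can be moved together as a unit; once this is in hand, the interchange argument and the final naturality step are routine, being formally identical to the corresponding steps in the two preceding lemmas.
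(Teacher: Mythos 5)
Your proposal is correct and follows essentially the same route as the paper: bring the $x$-pair together with the algorithm of the preceding lemmas, then bring the $y$-pair into adjacency by further YB/C reductions preserving $d(x)=1$, observe that the two type-V reductions now remove disjoint adjacent blocks and hence commute by functoriality of $\otimes$ and horizontal composition (the interchange law), and transport back via naturality and braided coherence. The paper's intermediate braid $\alpha_{k+1}'$ plays exactly the role of your $\alpha_{m+n+1}$ (both marked pairs adjacent there), and its final square-filling step is precisely your interchange argument, so the differences are only bookkeeping.
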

\begin{proof}
We use the analogous algorithm by choosing one of the crossings marked with an $x$, obtaining a sequence of reductions $g_{i}$.  The target of the final reduction $g_{k}$ is some positive braid $\alpha_{k+1}'$ which has the property that the induced marking has distance function constant at $1$.  As a geometric braid (but \textit{not} as a word in the braid generators), $\alpha_{k+1}$ and $\alpha_{k+1}'$ are equal, so we can find some sequence of reductions $h_{i}$ of the form YB or C starting at $\alpha_{k+1}'$ and ending at $\alpha_{k+1}$.  We can additionally demand that the $h_{i}$ preserve the property that $d(y)=1$ by following the same algorithm applied to $y$ instead of $x$.  We then have that the reduction consisting of the $f_{i}$ equals that of the $g_{i}$ and $h_{i}$ by coherence for braided monoidal bicategories.

Then naturality of the 2-cells corresponding to the $g_{i}$ with respect to the 2-cell corresponding to $v$ shows that there are reductions $v_{k+1}:\alpha_{k+1}' \leadsto \beta_{k+1}, G:\beta \leadsto \beta_{k+1}$ such that the two composite reductions $\alpha_{1} \leadsto \beta_{k+1}$ are equal.  We can similarly use naturality for $v'$ with respect to the $h_{i}$ to produce the reductions shown below,  in which every enclosed region consists of a pair of equal reductions.
\[
\xy
{\ar@{~>}^{f_{1}} (0,0)*+{\alpha_{1}}; (20,0)*+{} };
{\ar@{~>}^{f_{k}} (80,0)*+{}; (100,0)*+{\alpha_{k+1}} };
(50,0)*{\cdots};
{\ar@{~>}^{g_{1}} (0,0)*+{\alpha_{1}}; (15,-10)*+{} };
{\ar@{~>}^{g_{k}} (35,-23)*+{}; (50,-33)*+{\alpha_{k+1}'} };
{\ar@{~>}^{h_{1}} (50,-33)*+{\alpha_{k+1}'}; (65,-23)*+{} };
{\ar@{~>}^{h_{m}} (85,-10)*+{}; (100,0)*+{\alpha_{k+1}} };
{\ar@{~>}_{v} (0,0)*+{\alpha_{1}}; (0,-10)*+{\beta} };
{\ar@{~>}_{G} (0,-10)*+{\beta}; (35,-43)*+{\beta_{k+1}} };
{\ar@{~>}^{v_{k+1}} (50,-33)*+{\alpha_{k+1}'}; (35,-43)*+{\beta_{k+1}} };
{\ar@{~>}_{v_{k+1}'} (50,-33)*+{\alpha_{k+1}'}; (65,-43)*+{\beta_{k+1}'} };
{\ar@{~>}^{v'} (100,0)*+{\alpha_{k+1}}; (100,-10)*+{\gamma} };
{\ar@{~>}_{H} (65,-43)*+{\beta_{k+1}'}; (100,-10)*+{\gamma} };
(25,-16)*{.}; (75,-16)*{.}
\endxy
\]
Note that $H$ can only consist of basic reductions of type YB or C since the reduction $\alpha_{k+1}' \leadsto \gamma$ involving the $h_{i}$ only has a single instance of a reduction of type V, hence the same must hold for the reduction involving $H$ and that is $v_{k+1}'$.  Thus to complete the required equality of reductions, we need only fill in the square at the bottom of this diagram and then use the ``inverse'' reduction of $H$.  But since $v_{k+1}$ and $v_{k+1}'$ remove two different sets of crossings by assumption, the square can be filled in to the square
\[
\xy
{\ar@{~>}^{v_{k+1}'} (0,0)*+{\alpha_{k+1}'}; (40,0)*+{\beta_{k+1}'} };
{\ar@{~>}_{v_{k+1}} (0,0)*+{\alpha_{k+1}'}; (0,-15)*+{\beta_{k+1}} };
{\ar@{~>} (40,0)*+{\beta_{k+1}'}; (40,-15)*+{\sigma} };
{\ar@{~>} (0,-15)*+{\beta_{k+1}}; (40,-15)*+{\sigma} };
\endxy
\]
using two different reductions of type V by the functoriality of both the tensor product and horizontal composition in $\mathcal{S}_{+}$.
\end{proof}

\noindent \textbf{Definition.}  The \textit{length} of a reduction is the number of basic reductions of type V in it.  For any positive braid $\alpha$, let $\ell(\alpha)$ denote the maximum length of a complete reduction.

We are finally in a position to prove Theorem \ref{completered}.  With our three technical lemmas in place, this is largely routine.

\begin{proof}[Proof of Theorem \ref{completered}]
We will induct over $\ell({\alpha})$.  Lemma \ref{l=0} is the case when $\ell(\alpha)=0$.  Assume the result is true for $\ell(\alpha) \leq n$, and let $\alpha_{1}$ be a positive braid of length $n+1$.  Changing the source 1-cell of $\alpha_{1}$ using only reductions of type YB or C if necessary, assume that we have two complete reductions  $\alpha_{1} \leadsto \omega$ which we will call $r_{1}$ and $r_{2}$,  one of which begins with a reduction $v$ of type V and the other with a sequence of reductions $f_{j}$ followed by $v'$ as in the generic situation.  By necessity, the canonical marking shows that the beginning of this pair of complete reductions falls into one of the cases by the three lemmas above, so we can complete this to a pair of equal reductions as shown below.
\[
\xy
{\ar@{~>}_{v} (0,0)*+{\alpha_{1}}; (0,-15)*+{\beta} };
{\ar@{~>}^{f_{1}} (0,0)*+{\alpha_{1}}; (20,0)*+{\alpha_{2}} };
{\ar@{~>}^{f_{2}} (20,0)*+{\alpha_{2}}; (30,0)*+{\cdots} };
{\ar@{~>}^{f_{k}} (30,0)*+{\cdots}; (50,0)*+{\alpha_{k+1}}};
{\ar@{~>}^{v'} (50,0)*+{\alpha_{k+1}}; (70,0)*+{\gamma} };
{\ar@{~>} (0,-15)*+{\beta}; (70,-15)*+{\sigma} };
{\ar@{~>} (70,0)*+{\gamma}; (70,-15)*+{\sigma} };
\endxy
\]

Now we have the remainder of the first complete reduction $\gamma \leadsto \omega$, and we can also choose any complete reduction $\sigma \leadsto \omega$.  Note that any complete reduction $\sigma \leadsto \delta$ has the property that $\omega = \delta$ as elements of the braid group, thus we can produce a complete reduction $\sigma \leadsto \omega$ by Lemma \ref{l=0}.  Now $\ell(\gamma) < \ell(\alpha_{1})$, so by induction the complete reduction $\gamma \leadsto \omega$ given by the remainder of $r_{1}$ is equal to the complete reduction $\gamma \leadsto \sigma \leadsto \omega$.  The same argument shows that the remainder of the complete reduction $r_{2}:\beta \leadsto \omega$ is equal to the composite complete reduction $\beta \leadsto \sigma \leadsto \omega$, hence the two complete reductions $r_{1}, r_{2}$ of $\alpha_{1}$ are equal.
\end{proof}

\subsection{Proof of Theorem \ref{coherencesmb}}

We are finally ready prove the coherence theorem for symmetric monoidal bicategories.  The strategy here will be to express every 2-cell as a zigzag of complete reductions which Theorem \ref{completered} will then ensure are unique.

\begin{proof}[Proof of Theorem \ref{coherencesmb}]
Let $f,g$ be parallel 1-cells in $\mathcal{S}_{+}$ with the same underlying permutation.  We already know that $f$ and $g$ cannot be isomorphic if they have different underlying permutations, so we will show that there is a unique invertible 2-cell $f \Rightarrow g$.  We know by Proposition \ref{minimal} that there is an isomorphism between $f$ and a minimal 1-cell $f'$ with the same permutation; the same holds for $g$, and since these have the same underlying permutation this gives the existence of some invertible 2-cell $f \Rightarrow g$.  But Theorem \ref{completered} shows that there is a unique complete reduction $f \leadsto f'$, so a unique invertible 2-cell $f \Rightarrow f'$ which only reduces the total number of crossings.  Any 2-cell $\alpha$ in $\mathcal{S}_{+}$ can be factored into a (vertical) composite of 2-cells $\gamma_{k} \beta_{k} \gamma_{k-1} \beta_{k-1} \cdots \gamma_{1} \beta_{1}$ where each $\beta_{i}$ only reduces crossings and each $\gamma_{i}$ only increases crossings.  Thus $\alpha$
can be written as a zigzag of reductions.  Each reduction can be extended to a complete reduction, and all of these can be chosen to have the same target 1-cell $f'$ by Lemma \ref{l=0}.  The resulting triangles of 2-cells are each instances of pairs of complete reductions starting and ending at the same source and target, hence the reductions are equal by Theorem \ref{completered}.  This shows that the corresponding diagrams of 2-cells also commute, so any 2-cell $f \Rightarrow g$ is equal to the unique 2-cell obtained from the unique complete reductions
\[
\xy
{\ar@{~>} (0,0)*+{f}; (20,0)*+{f'} };
{\ar@{~>} (40,0)*+{g.}; (20,0)*+{f'} };
\endxy
\]
\end{proof}

Our proof above can easily be extended to any set of objects seen as a discrete bicategory.

\begin{proof}[Proof of Corollary \ref{coherenceset}]
Changing the set of objects from a terminal set to an arbitrary set merely requires adding labels to the braids; the rest of the argument is easily modified to accommodate this change.
\end{proof}

Finally, we end with a proof of our strictification theorem, Theorem \ref{strictification}.

\begin{proof}[Proof of Theorem \ref{strictification}]
Let $X$ be a symmetric monoidal bicategory.  By \cite{gurski-braid}, the bicategory $\textrm{Gr} X$ can be equipped with the structure of a strict braided monoidal bicategory such that the canonical functors $X \rightarrow \textrm{Gr} X, \textrm{Gr} X \rightarrow X$ become braided monoidal biequivalences.  Recall that an object of $\textrm{Gr}X$ is a string $\mb{x}=(x_{n}, \ldots, x_{1})$ of objects in $X$, possibly empty.  We can equip this braided monoidal structure on $\textrm{Gr} X$ with a symmetric structure by defining $v_{\mb{x}, \mb{y}}$ to be $v_{e(x), e(y)}$ in $X$; the sylleptic and symmetric monoidal bicategory axioms follow immediately from those same axioms in $X$.  Thus we have given $\textrm{Gr}X$ the structure of a symmetric monoidal bicategory whose underlying braided monoidal bicategory is strict.

Now alter the symmetric monoidal structure on $\textrm{Gr}X$ by keeping the same composition and monoidal structure, but by defining $R_{\mb{x}\mb{y}}^{\dot}$ to be the 1-cell represented by $R_{\mb{y}\mb{x}}$ in $X$; similarly, change the unit and counit of the adjunctions to instances of $v^{-1}$ and $v$, respectively.  The same calculations as in Proposition \ref{positivebraidedstructure} show that this gives a new symmetric monoidal structure on $\textrm{Gr}X$, written $\textrm{Gr}^{+}X$, and that the identity functor is a symmetric monoidal biequivalence $1:\textrm{Gr}X \rightarrow \textrm{Gr}^{+}X$.  Composing with the symmetric monoidal biequivalence $X \rightarrow \textrm{Gr}X$ shows that every symmetric monoidal bicategory can be strictified.
\end{proof}

\bibliographystyle{plain}
\bibliography{references}

\end{document}